\documentclass[11pt,reqno]{amsart}

\usepackage{amsmath,amsfonts,amsthm,amssymb}
\usepackage{graphicx}
\usepackage{verbatim}
\usepackage{color}
\usepackage{amscd}
\usepackage{verbatim}
\usepackage{mathrsfs}
\usepackage[colorlinks,citecolor=red,pagebackref,hypertexnames=false, breaklinks]{hyperref}


\begin{document}
\newcommand{\M}{{\mathcal M}}
\newcommand{\loc}{{\mathrm{loc}}}
\newcommand{\core}{C_0^{\infty}(\Omega)}
\newcommand{\sob}{W^{1,p}(\Omega)}
\newcommand{\sobloc}{W^{1,p}_{\mathrm{loc}}(\Omega)}
\newcommand{\merhav}{{\mathcal D}^{1,p}}
\newcommand{\be}{\begin{equation}}
\newcommand{\ee}{\end{equation}}
\newcommand{\mysection}[1]{\section{#1}\setcounter{equation}{0}}
\newcommand{\laplace}{\Delta}
\newcommand{\pl}{\laplace_p}
\newcommand{\grad}{\nabla}
\newcommand{\pd}{\partial}
\newcommand{\bo}{\pd}
\newcommand{\csub}{\subset \subset}
\newcommand{\sm}{\setminus}
\newcommand{\ssm}{:}
\newcommand{\diver}{\mathrm{div}\,}
\newcommand{\bea}{\begin{eqnarray}}
\newcommand{\eea}{\end{eqnarray}}
\newcommand{\bean}{\begin{eqnarray*}}
\newcommand{\eean}{\end{eqnarray*}}
\newcommand{\thkl}{\rule[-.5mm]{.3mm}{3mm}}
\newcommand{\cw}{\stackrel{\rightharpoonup}{\rightharpoonup}}
\newcommand{\id}{\operatorname{id}}
\newcommand{\supp}{\operatorname{supp}}
\newcommand{\wlim}{\mbox{ w-lim }}
\newcommand{\mymu}{{x_N^{-p_*}}}
\newcommand{\R}{{\mathbb R}}
\newcommand{\N}{{\mathbb N}}
\newcommand{\Z}{{\mathbb Z}}
\newcommand{\Q}{{\mathbb Q}}
\newcommand{\abs}[1]{\lvert#1\rvert}
\newtheorem{theorem}{Theorem}[section]
\newtheorem{corollary}[theorem]{Corollary}
\newtheorem{lemma}[theorem]{Lemma}
\newtheorem{notation}[theorem]{Notation}
\newtheorem{definition}[theorem]{Definition}
\newtheorem{remark}[theorem]{Remark}
\newtheorem{proposition}[theorem]{Proposition}
\newtheorem{assertion}[theorem]{Assertion}
\newtheorem{problem}[theorem]{Problem}
\newtheorem{conjecture}[theorem]{Conjecture}
\newtheorem{question}[theorem]{Question}
\newtheorem{example}[theorem]{Example}
\newtheorem{Thm}[theorem]{Theorem}
\newtheorem{Lem}[theorem]{Lemma}
\newtheorem{Pro}[theorem]{Proposition}
\newtheorem{Def}[theorem]{Definition}
\newtheorem{Exa}[theorem]{Example}
\newtheorem{Exs}[theorem]{Examples}
\newtheorem{Rems}[theorem]{Remarks}
\newtheorem{Rem}[theorem]{Remark}

\newtheorem{Cor}[theorem]{Corollary}
\newtheorem{Conj}[theorem]{Conjecture}
\newtheorem{Prob}[theorem]{Problem}
\newtheorem{Ques}[theorem]{Question}
\newtheorem*{corollary*}{Corollary}
\newtheorem*{theorem*}{Theorem}
\newcommand{\pf}{\noindent \mbox{{\bf Proof}: }}


\renewcommand{\theequation}{\thesection.\arabic{equation}}
\catcode`@=11 \@addtoreset{equation}{section} \catcode`@=12
\newcommand{\Real}{\mathbb{R}}
\newcommand{\real}{\mathbb{R}}
\newcommand{\Nat}{\mathbb{N}}
\newcommand{\ZZ}{\mathbb{Z}}
\newcommand{\CC}{\mathbb{C}}
\newcommand{\Pess}{\opname{Pess}}
\newcommand{\Proof}{\mbox{\noindent {\bf Proof} \hspace{2mm}}}
\newcommand{\mbinom}[2]{\left (\!\!{\renewcommand{\arraystretch}{0.5}
\mbox{$\begin{array}[c]{c}  #1\\ #2  \end{array}$}}\!\! \right )}
\newcommand{\brang}[1]{\langle #1 \rangle}
\newcommand{\vstrut}[1]{\rule{0mm}{#1mm}}
\newcommand{\rec}[1]{\frac{1}{#1}}
\newcommand{\set}[1]{\{#1\}}
\newcommand{\dist}[2]{$\mbox{\rm dist}\,(#1,#2)$}
\newcommand{\opname}[1]{\mbox{\rm #1}\,}
\newcommand{\mb}[1]{\;\mbox{ #1 }\;}
\newcommand{\undersym}[2]
 {{\renewcommand{\arraystretch}{0.5}  \mbox{$\begin{array}[t]{c}
 #1\\ #2  \end{array}$}}}
\newlength{\wex}  \newlength{\hex}
\newcommand{\understack}[3]{%
 \settowidth{\wex}{\mbox{$#3$}} \settoheight{\hex}{\mbox{$#1$}}
 \hspace{\wex}  \raisebox{-1.2\hex}{\makebox[-\wex][c]{$#2$}}
 \makebox[\wex][c]{$#1$}   }%
\newcommand{\smit}[1]{\mbox{\small \it #1}}
\newcommand{\lgit}[1]{\mbox{\large \it #1}}
\newcommand{\scts}[1]{\scriptstyle #1}
\newcommand{\scss}[1]{\scriptscriptstyle #1}
\newcommand{\txts}[1]{\textstyle #1}
\newcommand{\dsps}[1]{\displaystyle #1}
\newcommand{\dx}{\,\mathrm{d}x}
\newcommand{\dy}{\,\mathrm{d}y}
\newcommand{\dz}{\,\mathrm{d}z}
\newcommand{\dt}{\,\mathrm{d}t}
\newcommand{\dr}{\,\mathrm{d}r}
\newcommand{\du}{\,\mathrm{d}u}
\newcommand{\dv}{\,\mathrm{d}v}
\newcommand{\dV}{\,\mathrm{d}V}
\newcommand{\ds}{\,\mathrm{d}s}
\newcommand{\dS}{\,\mathrm{d}S}
\newcommand{\dk}{\,\mathrm{d}k}

\newcommand{\dphi}{\,\mathrm{d}\phi}
\newcommand{\dtau}{\,\mathrm{d}\tau}
\newcommand{\dxi}{\,\mathrm{d}\xi}
\newcommand{\deta}{\,\mathrm{d}\eta}
\newcommand{\dsigma}{\,\mathrm{d}\sigma}
\newcommand{\dtheta}{\,\mathrm{d}\theta}
\newcommand{\dnu}{\,\mathrm{d}\nu}


\renewcommand{\div}{\mathrm{div}}
\newcommand{\red}[1]{{\color{red} #1}}

\newcommand{\cqfd}{\begin{flushright}                  
			 $\Box$
                 \end{flushright}}


\title{On gradient estimates for the heat kernel}

\author{Baptiste Devyver}
\address{Baptiste Devyver, Department of Mathematics,  Technion - Israel Institute of Technology, Haifa 32000, Israel}
\email{devyver@technion.ac.il}

\date{March 27, 2018}

\maketitle
\tableofcontents

\begin{abstract}  We study pointwise and $L^p$ gradient estimates of the heat kernel, on manifolds that may have some amount of negative Ricci curvature, provided it is not too negative (in an integral sense) at infinity. We also study boundedness on $L^p$ spaces for the heat operator $e^{-t\vec{\Delta}_k}$ of $\vec{\Delta}_k$, the Hodge Laplacian on differential $k$-forms.

\end{abstract}

\section{Introduction and statement of the results}

\subsection{Introduction}

In their celebrated work \cite{LY}, P. Li and S. T. Yau proved that any positive solution $u(x,t)$ of the heat equation on a complete manifold of dimension $n$ with non-negative Ricci curvature satisfies the following gradient estimate:

\begin{equation}\label{Grad-LY}
\frac{|\nabla u|^2}{u^2}-\frac{u_t}{u}\leq \frac{n}{2t}.
\end{equation}
Later on, many works have been devoted to proving inequalities such as \eqref{Grad-LY} in various settings (see the recent \cite{BBG} for such a result, as well as for references). In order to obtain an inequality such as \eqref{Grad-LY}, it is customary to make a {\em global} curvature (or more generally``curvature-dimension'') assumption. The inequality \eqref{Grad-LY} has many useful consequences. Among them, are the Gaussian upper estimates for the heat kernel $p_t(x,y)$ and its gradient:

\begin{equation}\tag{$LY$}
p_{t}(x,y)\simeq
\frac{1}{V(x,\sqrt{t})}\exp
\left(-\frac{d^{2}(x,y)}{Ct}\right), \quad \forall~t>0,\, x,y\in
 M,\label{UE}
\end{equation}
and

\begin{equation}\label{G}\tag{$G$}
|\nabla_x p_t(x,y)|\lesssim \frac{1}{\sqrt{t}V(x,\sqrt{t})}\exp
\left(-\frac{d^{2}(x,y)}{Ct}\right),,\quad\forall t>0,\,\forall x,\,y\in M.
\end{equation}
While \eqref{UE} has been later on characterized in terms of functional inequalities on $M$ (the so-called relative Faber-Krahn inequalities) by A. Grigor'yan (see \cite{G2}), \eqref{G} remains more mysterious. For example, using the characterization of \eqref{UE} in terms of functional inequalities as in \cite{GSC}, one can show that \eqref{UE} holds on a non-parabolic manifold, that is isometric outside a compact set to a manifold with non-negative Ricci curvature. Such a perturbation result for \eqref{G} is still currently out of reach. The difficulty lies in the fact that most probably, there is no simple functional analytic characterization of \eqref{G}, contrary to \eqref{UE}.  Let us also mention that \eqref{G} and \eqref{UE} are known to hold on nilpotent Lie groups endowed with a sub-Laplacian, thanks to the work of Varopoulos \cite{Var}.

Beyond the customary assumption of non-negativity of the Ricci curvature, and apart from the case of Lie groups, \eqref{G} is known to hold only in very few cases. Some $L^p$ estimates for the gradient of the heat kernel have  recently been obtained in a quite general setting in \cite{MO}, but they are much weaker that \eqref{G}. Also recently, the question of extending the Li-Yau gradient inequality \eqref{Grad-LY} beyond the non-negative Ricci setting has been considered in a few papers, e.g. \cite{ZZ}, \cite{R2}, \cite[Section 3]{C8}. We shall present the known results in details in Section 1.4 --and compare them with our own--, but for the purpose of this introduction, let us limit ourselves to indicate some key facts. Typically, in the above-mentionned papers, an integral bound on the negative part of the Ricci curvature is assumed, and a Li-Yau gradient inequality is deduced; consequences for the heat kernel and its gradient then follow. As an example, in \cite{ZZ} the following local uniform smallness assumption on the negative part of the Ricci curvature is used: there exists $p>\frac{n}{2}$ such that

$$\sup_{x\in M}\left(\frac{1}{V(x,1)}\int_{B(x,1)}||\mathrm{Ric}_-(y)||^p\,dy\right)^{1/p}<\kappa,$$
where $\kappa=\kappa(p,n)$ is a small constant and $V(x,1)$ denotes the volume of the geodesic ball $B(x,1)$. Under this assumption on the Ricci curvature, it is proved in \cite[Theorem 1.1]{ZZ}, that the following Li-Yau gradient estimate holds: for every $T>0$, there are constants $\alpha=\alpha(T,n,\kappa)$ and $\beta=\beta(T,n,\kappa)$ such that, for all positive solution $u(x,t)$ of the heat equation,

\begin{equation}\label{Grad-LY2}
\alpha \frac{|\nabla u|^2}{u^2}-\frac{u_t}{u}\leq \frac{\beta}{t},\quad t\in (0,T).
\end{equation}
This easily yields \eqref{UE} and \eqref{G} {\em for small times}, that is for $t\in (0,T)$ (see \cite{R3} for the upper bound of $p_t(x,y)$). The method was subsequently refined by G. Carron in \cite{C8}, but it is doomed to fail for proving \eqref{G} for {\em large times}, unless one makes a {\em global} size restriction on the negative part of Ricci, such as

\begin{equation}\label{SR}
\sup_{x\in M}\int_{M}G(x,y)||\mathrm{Ric}_-(y)||\,d\mu(y)<\frac{1}{16n},
\end{equation}
where $G(x,y)$ is the positive, minimal Green function on $M$ (see Section 1.4 for a more detailed explanation of this last statement). Gradient estimates for the heat kernel under this kind of assumptions actually goes back to \cite{CZ}, where a different method, that we will present in a moment, has been employed. However, \eqref{SR} is a very strong assumption. In fact, there is an inherent limitation in obtaining gradient estimates for the heat kernel through Li-Yau gradient estimates: namely, that integral bounds on the negative part of the Ricci curvature are not strong enough to give a control on the topology of $M$, more specifically they do not control the number of ends (unless the bound is small enough as in \eqref{SR}, in which case $M$ has only one end). However it is known that if $M$ has several Euclidean ends, then \eqref{G} cannot hold, since there is not extra polynomial decay in time when taking the gradient of the heat kernel (see \cite[Proposition 6.1]{CCH}); while if $M$ has only one end which is Euclidean, some additional decay in time is expected. Let us stress that it is far from being clear how to incorporate this extra topological information in the Li-Yau method.

It turns out that there is another point of view on \eqref{G} that has proved very useful in the last few years: it is the fact that \eqref{G} is related to estimates for the heat kernel of the Hodge Laplacian $\vec{\Delta}=dd^*+d^*d$ on differential $1$-forms. According to \cite{CD3}, if Gaussian type upper estimates are available for both the heat kernel of the scalar Laplacian $\Delta$, and the heat kernel of the Hodge Laplacian $\vec{\Delta}$, then \eqref{G} holds. The Hodge Laplacian on $1$-forms has a well-known Bochner formula, which writes:

$$\vec{\Delta}=\nabla^*\nabla+\mathrm{Ric},$$
which allows one to look at $\vec{\Delta}$ as a generalized Schr\"odinger operator. The Bochner formula implies by a standard domination technique that, if the Ricci curvature is non-negative, then for every smooth, compactly supported $1$-form $\omega$ and all $t\geq0$,

\begin{equation}\label{domin}
||e^{-t\vec{\Delta}}\omega||\leq e^{-t\Delta}||\omega||.
\end{equation}
In fact, it is possible to show that \eqref{domin} is equivalent to the Ricci curvature being non-negative. From \eqref{domin}, it is also possible to recover the fact that \eqref{G} holds if the Ricci curvature is non-negative: indeed, the {\em scalar} heat kernel has Gaussian upper-estimates by \eqref{UE}, hence the domination \eqref{domin} implies that the heat kernel of $\vec{\Delta}$ also has Gaussian estimates, and from this and \eqref{UE} we deduce \eqref{G}. Hence, a proof of \eqref{G} that does not use strictly speaking the Li-Yau gradient inequality \eqref{Grad-LY}, only domination and \eqref{UE}. Recently, in \cite{D2} and later \cite{CDS}, estimates of Gaussian type for the heat kernel of the Hodge Laplacian on $1$-forms have been characterized in a quite general setting, including manifolds that can have a certain amount of negative Ricci curvature, provided it is not ``too negative at infinity''. On these manifolds, the estimates of Gaussian type for the heat kernel of the Hodge Laplacian on $1$-forms have been shown to be equivalent to the fact that $\mathrm{Ker}_{L^2}(\vec{\Delta})=\{0\}$. In particular $\mathrm{Ker}_{L^2}(\vec{\Delta})=\{0\}$ is a sufficient condition for \eqref{G} on these manifolds. This result is interesting, because the condition $\mathrm{Ker}_{L^2}(\vec{\Delta})=\{0\}$ can be interpreted in terms of $L^2$ cohomology, which has a topological/geometrical interpretation in many cases. For example, in the case $M$ has dimension $n\geq3$ and is asymptotically Euclidean, then $\mathrm{Ker}_{L^2}(\vec{\Delta})=\{0\}$ implies that $M$ has only one end. However, one expects that the condition $\mathrm{Ker}_{L^2}(\vec{\Delta})=\{0\}$ is too strong and that \eqref{G} could be obtained under weaker conditions. In the present article we study the validity of \eqref{G} and other related inequalities under conditions that are weaker than $\mathrm{Ker}_{L^2}(\vec{\Delta})=\{0\}$. In essentially the same class of manifolds that was considered in \cite{CDS}, we obtain a general, almost optimal criterion (Theorem \ref{gradient}) for the validity of \eqref{G}. As a consequence of this general criterion, and as a highlight of our results, let us quote right now the following theorem, which will be proved at the end of this article:

\begin{Thm}\label{main_intro}

Let $(M,g)$ be a Riemannian manifold of dimension $n>8$, that is isometric to the cone over a compact, connected manifold $(X,\bar{g})$ at infinity. Assume that the Ricci curvature on $X$ is bounded from below by $(n-2)\bar{g}$, and that $X$ is not isometric to the $(n-1)$-dimensional Euclidean sphere. Then, the gradient heat kernel estimates \eqref{G} hold on $M$. 

\end{Thm}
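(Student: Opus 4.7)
My plan is to apply the general criterion of Theorem \ref{gradient} to this asymptotically conical manifold, after first checking that the setting of that theorem is available and then verifying its (weakened) sufficient condition.

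First I would dispose of the easy preliminaries. Since $M$ is isometric to the metric cone $C(X)$ at infinity and the Ricci tensor of the cone $C(X)$ is non-negative if and only if $\mathrm{Ric}_X \geq (n-2)\bar g$, the assumption places us in the class of manifolds with \emph{compactly supported} $\mathrm{Ric}_-$. In particular every integral smallness-at-infinity assumption on $\mathrm{Ric}_-$ required in \cite{CDS} or in Theorem \ref{gradient} is trivially satisfied. The conical structure at infinity yields volume doubling and the relative Faber-Krahn inequality in the sense of Grigor'yan, so \eqref{UE} holds on $M$. Thus the only real work is to verify the criterion of Theorem \ref{gradient}, which is strictly weaker than $\mathrm{Ker}_{L^2}(\vec\Delta) = \{0\}$ and is the natural replacement needed here, since on a cone $C(X)$ with $X$ connected one expects a non-trivial contribution to $L^2$ harmonic $1$-forms coming from $H^0(X)$.

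The heart of the proof is a separation-of-variables analysis on the end. I would decompose the space of $1$-forms on the cone according to the eigenforms of $\vec\Delta^X$ on the link, reducing the Hodge Laplacian on $C(X)$ to a family of ODEs in the radial variable $r$. The condition $\mathrm{Ric}_X\geq (n-2)\bar g$ gives, via Lichnerowicz and Obata, a spectral gap for $\vec\Delta^X$ on coexact $1$-forms, with strict inequality precisely because $X \neq S^{n-1}$. One then inspects the indicial roots of the radial ODEs, identifies those that can give rise to $L^2$ or approximately-$L^2$ harmonic $1$-forms on $M$, and checks by hand that each such form either is truly in the $L^2$-range of $d+d^*$ or can be approximated in the weak sense demanded by the criterion. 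The dimension restriction $n>8$ enters here: it is the threshold that makes the relevant indicial exponents, together with the associated weighted Sobolev/Hardy-type exponents coming from the criterion, lie in the admissible range. Once the criterion is satisfied, Theorem \ref{gradient} delivers \eqref{G} directly.

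The main obstacle, as I see it, is verifying the criterion of Theorem \ref{gradient} on the conical end. Writing out the spectral decomposition on $X$ is routine, but controlling which summands produce harmonic $1$-forms that obstruct the criterion — and then ruling them out using exactly the two hypotheses $X \neq S^{n-1}$ (to avoid the borderline Obata case) and $n > 8$ (to ensure that the relevant integrability exponents cooperate) — is the delicate step. Every other input (doubling, Faber-Krahn, Gaussian upper bound, smallness of $\mathrm{Ric}_-$ at infinity) is immediate from the conical geometry and the non-negativity of Ricci outside a compact set.
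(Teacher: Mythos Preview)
Your overall plan --- verify the hypotheses of Theorem \ref{gradient} on the conical end --- is the right one, but you have misidentified the criterion itself, and this is a genuine gap. The hypothesis of Theorem \ref{gradient} is \eqref{Ker}, i.e.\ $\mathrm{Ker}_{L^2}(\vec\Delta)\subset L^q$, and to obtain \eqref{G} one needs $q=1$. This is an \emph{integrability} condition on $L^2$ harmonic $1$-forms; it has nothing to do with a form being ``in the $L^2$-range of $d+d^*$'' (harmonic forms are orthogonal to that range by Hodge decomposition). What has to be shown is that every $L^2$ harmonic $1$-form on $M$ decays fast enough on the conical end to lie in $L^1$. Relatedly, the restriction $n>8$ does not come from indicial exponents: it is the volume condition \eqref{dd} with $\kappa>8$ required by Theorem \ref{pi_heat} and hence by Theorem \ref{gradient}. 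The indicial computation gives $q^*<1$ whenever $\lambda_1(X)>n-1$, regardless of $n$.

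The paper's route to $\mathrm{Ker}_{L^2}(\vec\Delta)\subset L^1$ also differs from your proposed direct Cheeger-type analysis of $1$-forms. It first proves (Theorem \ref{coho-comp}) that $H^1_c(M)\to H^1_{(2)}(M)$ is surjective; consequently any $L^2$ harmonic $1$-form writes as $\omega=\varphi+df$ with $\varphi$ compactly supported and closed (Proposition \ref{harmo}). The condition $d^*\omega=0$ forces the flux of $f$ through large spheres to vanish, so in the spherical-harmonic expansion of the \emph{scalar} function $f$ on the cone the $r^{2-n}$ mode drops out, and the decay of $df$ is controlled by $\lambda_1(X)$, the first nonzero eigenvalue of the \emph{scalar} Laplacian on $X$ (Lemma \ref{asym_cone}). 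Lichnerowicz--Obata then gives $\lambda_1(X)\ge n-1$, with strict inequality exactly when $X\neq S^{n-1}$, hence $q^*<1$ and $df\in L^1$. Note that Lichnerowicz--Obata is used only for functions; no spectral gap for coexact $1$-forms on $X$ is invoked, so your appeal to ``Lichnerowicz--Obata for coexact $1$-forms'' is both unnecessary and not the correct attribution.
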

In the case the manifold $X$ is the Euclidean sphere $S^{n-1}$, i.e. $M$ is isometric to the Euclidean space at infinity, we fail to obtain \eqref{G}, however we can still prove that \eqref{G} ``almost holds''. The unnatural restriction $n>8$ is needed for technical reasons, to guarantee that $L^2$ harmonic $1$-forms decay fast enough at infinity. Another feature of our approach is that it is flexible enough to yield estimates for the heat kernel of operators such as $\vec{\Delta}_k$, the Hodge Laplacian on $k$-forms, and actually we will also obtain sufficient criteria for the uniform boundedness of the heat operator $e^{-t\vec{\Delta}_k}$ on some $L^p$ spaces. We refer to Section 1.3 for a detailed discussion of the results obtained in this article.

\subsection{Preliminaries}

After this quick introduction, let us introduce the setting. Once this has been done, our results will be presented in more details, in the next subsection. Let $M$ be a complete, connected, non-compact Riemannian manifold, endowed with a positive measure $\mu=e^{f}\nu$, absolutely continuous with respect to the Riemannian measure $\nu$. We assume that $f$ is smooth. We denote by $\nabla$ the Riemannian gradient, by $\Delta_\mu u=-\div(\nabla u)-\langle \nabla f,\nabla u\rangle$ the weighted non-negative Laplace operator. In the sequel, we will often denote $\Delta_\mu$ simply by $\Delta$, the dependence to the measure $\mu$ being thus implicit. Let $d$ denote the geodesic distance, $B(x,r)$ the open  ball for  $d$ with centre $x\in M$ and radius $r>0$, and $V(x,r)$ its volume $\mu\left(B(x,r)\right)$.

We will use the notation $h\lesssim g$ to indicate that there exists a constant $C$ (independent of the important parameters) such that $h\leq Cg$, $h\gtrsim g$ if $g\lesssim h$,  and $h\simeq g$ if $h\lesssim g$ and $h\gtrsim g$. 

The weighted manifold $(M,d,\mu)$ will be said to satisfy the volume doubling property if
  \begin{equation}\label{d}\tag{$V\!D$}
     V(x,2r)\lesssim  V(x,r),\quad \forall~x \in M,~r > 0.
    \end{equation}
It follows easily from \eqref{d} that there exists  $\nu>0$  such that
     \begin{equation*}\label{dnu}\tag{$V\!D_\nu$}
      \frac{V(x,r)}{V(x,s)}\lesssim \left(\frac{r}{s}\right)^{\nu} ,\quad \forall~ x \in M,~r \geq s>0.
    \end{equation*}
It is known (see  \cite[Theorem 1.1]{G1}) that if $M$ is  connected, non-compact, and satisfies \eqref{d}, then the following reverse doubling condition holds:
    there
exists
 $0<\nu'\leq \nu$  such that,
for all $r\geq s>0$ and $x\in M$,
\begin{equation}\label{rnu}\tag{$RD_{\nu'}$}
 \frac{V(x,r)}{V(x,s)}\gtrsim\left(\frac{r}{s}\right)^{\nu'}.
\end{equation}
Let us introduce the following volume lower bound: for some $x_0\in M$  there exists  $\kappa>0$ 
such that   
  \begin{equation}\label{dd}\tag{$V\!L_{\kappa}$}
  V(x_0,r)\gtrsim  r^{\kappa},\qquad\forall r\geq1.
  \end{equation}
From \eqref{d}, clearly this condition does not depend on the choice of 
$x_0$. Taking $s=1$ in  \eqref{rnu} shows that \eqref{dd} always holds at least for  $\kappa=\nu'$. We introduce an hypothesis of uniformity for the volume of balls, that will be assumed in many results of the paper: there is a constant $C>0$ such that, for all $x\in M$, $y\in M$, $t>0$,

\begin{equation}\label{vol_u}\tag{VU}
C^{-1}V(y,t)\leq V(x,t)\leq CV(y,t).
\end{equation}
If \eqref{vol_u} holds, then one fixes a point $x_0\in M$ once and for all, and one lets

$$V(t):=V(x_0,t).$$
The doubling assumption \eqref{d} then implies that for all $t>0$,

$$V(2t)\leq V(t).$$
Independently of the validity of \eqref{vol_u}, we will denote $V_{\sqrt{t}}$ the operator of multiplication by the function $V(x,\sqrt{t})$

Sometimes, we will in addition assume that $(M,d,\mu)$ satisfies the Sobolev inequality with parameter $n>2$, that is

\begin{equation}\label{Sob}\tag{$\mathrm{Sob}_n$}
||u||_{\frac{2n}{n-2}}\lesssim ||\nabla u||_2,\quad \forall u\in C_0^\infty(M).
\end{equation}
Note that $n$ does not have to be equal to the dimension of $M$. In the Riemannian case, i.e. when $\mu$ is the Riemannian measure, it is known that $n$ has to be greater or equal to the dimension of $M$ (see \cite{C9}) but it may happen that the inequality be strict.


We now introduce the heat kernel. Let $e^{-t\Delta}$ be the heat operator associated to $\Delta$, and  $p_t(x,y)$ its kernel, so that, for any  compactly supported smooth function $f$ on $M$, there holds:
$$e^{-t\Delta}u(x)=\int_M p_t(x,y)u(y)d\mu(y).$$
It is classical that $p_t$ is smooth, positive, satisfies $p_t(x,y)=p_t(y,x)$, and that under \eqref{d}, $\int_Mp_t(x,y)\,d\mu(y)= 1$ for all $x$, in other words $M$ is stochastically  complete. 

On and off-diagonal estimates of the heat kernel $p_t(x,y)$ have been studied in detail in the past thirty years and are well understood. Let us start by introducing the on-diagonal estimate:

\begin{equation}\tag{$DU\!E$}
p_{t}(x,x)\lesssim
\frac{1}{V(x,\sqrt{t})}, \quad \forall~t>0,\,\forall\,x\in
 M. \label{due}
\end{equation}
Under  \eqref{d}, \eqref{due} self-improves into an (off-diagonal) Gaussian upper estimate (\cite[Theorem 1.1]{Gr1}, see also \cite[Section 4.2]{CS}):
\begin{equation}\tag{$U\!E$}
p_{t}(x,y)\lesssim
\frac{1}{V(x,\sqrt{t})}\exp
\left(-\frac{d^{2}(x,y)}{Ct}\right), \quad \forall~t>0,\, \mbox{a.e. }x,y\in
 M,\label{UE}
\end{equation}
for some $C>0$. It is known (see \cite{G2}) that the Gaussian upper estimate \eqref{UE} is equivalent to an $L^2$  isoperimetric-type inequality called the relative Faber-Krahn inequality, known to hold if the Ricci curvature is non-negative. Beyond this assumption, \eqref{UE} is known to hold for small times, under an integral assumption on the negative part of the Ricci curvature (see \cite{R1}). Let us also introduce the upper and lower Gaussian estimates for the heat kernel (sometimes called Li-Yau estimates):
\begin{equation}\tag{$LY$}
p_{t}(x,y)\simeq
\frac{1}{V(x,\sqrt{t})}\exp
\left(-\frac{d^{2}(x,y)}{Ct}\right), \quad \forall~t>0,\, \mbox{a.e. }x,y\in
 M,\label{LY}
\end{equation}
where $C$ denotes a possibly different constant in the upper and the lower bound. By the famous work of Li and Yau \cite{LY}, such estimates are known to hold on a Riemannian manifold having non-negative Ricci curvature. By a theorem of  Saloff-Coste (see \cite{SC} and the references therein, see also \cite{G1} for an alternative approach of the main implication, as well as the more recent \cite{BCF}), it is known that under \eqref{d}, \eqref{LY} is equivalent to the following family of scale-invariant $L^2$ Poincar\'{e} inequalities: there is a constant $C$ such that, for every geodesic ball $B=B(x,r)$, and every $u\in C^\infty(B)$,
\begin{equation}\label{P}\tag{$P$}
\int_{B}|u-u_{B}|^2\,d\mu\leq Cr^2\int_{B}|\nabla u|^2\,d\mu,
\end{equation}
where $u_{B}=\frac{1}{\mu(B)}\int_Bu\,d\mu$ denotes the average of $u$ over $B$. By \cite{LY}, \eqref{LY} holds on a complete manifold with non-negative Ricci curvature endowed with its Riemannian measure.

%
%
We now recall the notion of non-parabolicity (see for instance \cite[Section 5]{G4} for more information).  One says that $M$ is non-parabolic if
$$\int_1^\infty p_t(x,y)\,dt<+\infty$$
for some (all) $x, y\in M$.  In this case, $G(x,y)$ defined by $$G(x,y)=\int_0^{+\infty} p_t(x,y)\,dt$$ is finite for all $x\neq y$, and is the positive, minimal Green function of $\Delta$.
If furthermore \eqref{d} and \eqref{UE} hold, then the non-parabolicity of $M$ is equivalent to
\begin{equation}\label{vol1}
\int_1^{+\infty}\frac{dt}{V(x_0, \sqrt{t})}<+\infty,
\tag{$V^\infty$}
\end{equation}
for some  $x_0\in M$ (this uses the fact that under \eqref{d} and \eqref{UE}, the heat kernel has an on-diagonal lower bound $p_t(x,x)\geq \frac{C}{V(x,\sqrt{t})}$; see \cite[Theorem 11.1]{G4}). We also introduce an additional, related integral volume growth condition:
\begin{equation}\label{vol2}
\int_1^{+\infty}\frac{dt}{\left[V(x_0, \sqrt{t})\right]^{1-\frac{1}{p}}}<+\infty\tag{$V^p$}
\end{equation}
for some  $x_0\in M$.
It follows from condition  \eqref{d} that \eqref{vol1} and \eqref{vol2} do not depend on the 
choice of $x_0$. Notice that \eqref{dd} with $\kappa>2$ implies \eqref{vol2} for all $p\in \left(\frac{\kappa}{\kappa-2},+\infty\right]$, hence non-parabolicity under  \eqref{d} and \eqref{UE}.

\vskip5mm

We will also consider the heat kernel of elliptic operators of Schr\"{o}dinger type, acting on sections of a vector bundle over $M$. The main example we have in mind is the heat kernel of the (weighted) Hodge Laplacian $\vec{\Delta}_{k,\mu}=dd_\mu^*+d_\mu^*d$, acting on $k$-forms, which we describe now. Here, we have denoted $d_\mu^*$ the formal adjoint of $d$ with respect to the measure $\mu$. Denote by $e^{-t\vec{\Delta}_{k,\mu}}$ the associated heat operator, and by $\vec{p_t}^{k,\mu}(x,y)$ its kernel. When $k=1$ and $\mu$ is the Riemannian measure, we will simply write $\vec{\Delta}$ and $\vec{p_t}(x,y)$, and in order not to make notations too heavy, we shall often make the measure $\mu$ implicit and simply write $\vec{p_t}^{k}(x,y)$. It will in practice be clear what reference measure $\mu$ has been taken. Thus, for every $x$ and $y$ in $M$, $\vec{p_t}^{k,}(x,y)$ is a linear endomorphism from $\Lambda^kT^*_yM$ to $\Lambda^k T^*_xM$, where $\Lambda^kT^*M$ is the vector bundle of $k$-forms on $M$. By definition, for every compactly supported  smooth $k$-forms $\omega$ and $\eta$,  there holds:

$$\langle e^{-t\vec{\Delta}_{k}} \omega,\eta\rangle=\int_M (\vec{p_t}^{k}(x,y)\omega(y),\eta(x))_x\,d\mu(x)d\mu(y).$$
We consider the Gaussian estimates for $\vec{p_t}^{k}(x,y)$:

\begin{equation}\tag{$\vec{U\!E}_k$}
\|\vec{p_t}^{k}(x,y)\|_{y,x}\lesssim
\frac{1}{V(x,\sqrt{t})}\exp
\left(-\frac{d^{2}(x,y)}{Ct}\right), \quad \forall~t>0,\, \mbox{a.e. }x,y\in
 M,\label{vecUE}
\end{equation}
for some $C>0$.
Here $\|\cdot\|_{y,x}$  denotes the Hilbert-Schmidt norm of  the operator $\vec{p_t}^{k}(x,y)$ from $\Lambda^kT^*_yM$ to $\Lambda^kT^*_xM$ endowed with the Riemannian metrics at $y$ and $x$. In the case $k=1$, we will simply write ($\vec{U\!E}$) instead of ($\vec{U\!E}_k$). It turns out that even in the weighted setting, there is a Bochner formula for the Laplacian on $k$-forms (see \cite[Appendix]{CDS}):

\begin{equation}\label{Boch-k}
\vec{\Delta}_{k,\mu}=\nabla^*\nabla + \mathscr{R}_{k,\mu},
\end{equation}
where $\nabla$ is the Riemannian connection, $\nabla^*$ its formal adjoint with respect to the measure $\mu$, and for every $x\in M$, $\mathscr{R}_{k,\mu}(x)$ is a symmetric endomorphism of the fiber of $\Lambda^kT^*M$ at $x$. Hence, $\vec{\Delta}_{k,\mu}$ can be seen as a {\em generalised Schr\"{o}dinger operator}, with potential $\mathscr{R}_{k,\mu}$; the term $\mathscr{R}_{k,\mu}$ can furthermore be expressed in terms of the curvature tensor. For instance, for $k=1$, it identifies naturally with the weighted Ricci curvature $\mathrm{Ric}_\mu:=\mathrm{Ric}-\mathscr{H}_f$, if $\mu=e^f\nu$, where $\mathscr{H}_f$ is the Hessian operator of $f$ (see \cite[Appendix]{CDS}).

In \cite{CDS}, Gaussian estimates for the heat kernel of generalised Schr\"odinger operators were studied, and in the present article we will work in the same setting, which we introduce now. We consider a generalised Schr\"{o}dinger operator
$$\mathcal{L}=\nabla^*\nabla + \mathcal{R},$$
acting on a finite-dimensional Riemannian  bundle $E\rightarrow M$,  that is a finite-dimensional vector bundle equipped with a scalar product $\left(\cdot, \cdot\right)_x$ depending continuously on $x\in M$ (see for instance \cite[Section E]{Be}). Here $\nabla$ is a connection on $E\rightarrow M$ which is  compatible with the metric, and $\nabla^*\nabla$ is the so-called ``rough Laplacian'', also denoted by $\bar{\Delta}$. Of course, the formal adjoint $\nabla^*$ depends on the measure $\mu$, and we really should write $\nabla^*_\mu$ instead of $\nabla^*$, but to keep notations light we prefer to keep this dependance implicit. The ``potential'' $\mathcal{R}$ is by definition a $L^\infty_{loc}$ section of the vector bundle $\mathrm{End}(E)$, that is, for all $x\in M$, $\mathcal{R}(x)$ is a symmetric endomorphism of $E_x$, the fiber at $x$. Notice that if $E$ is the trivial $M\times \R$, a generalised Schr\"{o}dinger operator on $E$ is just a {\em scalar} Schr\"{o}dinger operator $\Delta+V$, where $V: M\rightarrow \R$ is a real potential. Since $\mathcal{R}$ is in $L^\infty_{loc}$, by standard elliptic regularity, solutions of $\mathcal{L}\omega=0$ are contained in $C_{loc}^{1,\alpha}$ for all $\alpha\in (0,1)$. For a.e. $x\in M$, one can diagonalize $\mathcal{R}(x)$  in an orthonormal basis of $E_x$.
Denote by $\mathcal{R}_+(x)$ the endomorphism corresponding to the non-negative eigenvalues, and  by $-\mathcal{R}_-(x)$ the one corresponding to the negative eigenvalues, so that $\mathcal{R}_+(x)$, $\mathcal{R}_-(x)$ are a.e. non-negative symmetric endomorphisms acting on the fiber $E_x$ and

$$\mathcal{R}=\mathcal{R}_+-\mathcal{R}_-.$$
Notice also that $\mathcal{R}_+$ and $\mathcal{R}_-$ belong to $L^\infty_{loc}$. Denote by $|\cdot|_x$ the norm on $E_x$ derived from $\left(\cdot, \cdot\right)_x$ and by $\|\cdot\|_x$ the induced norm on    $\mathrm{End}(E_x)$. In particular,

$$\|\mathcal{R}_-(x)\|_x=\sup_{v\in E_x,|v|_x=1}|\left(\mathcal{R}_-(x)v,v\right)_x|=\max \sigma(\mathcal{R}_-(x)),$$
where $\sigma(\mathcal{R}_-(x))$ is the (finite)  set of eigenvalues of $\mathcal{R}_-(x)$. Let  $C^\infty(E)$ (resp. $C_0^\infty(E)$) be the set of smooth sections of $E$ (resp. of smooth ``compactly supported'' sections, that is sections which coincide with the zero section outside a compact set). For $p\geq 1$ we will consider the $L^p$-norm on sections of $E$:
$$||\omega||_p=\left(\int_M|\omega(x)|_x^p\,d\mu(x)\right)^{1/p}$$
with the usual extension for $p=\infty$.
We shall denote  by $L^p(E)$, or simply $L^p$ when  no confusion is possible,  the set of sections of $E$ with finite $L^p$-norm, modulo equality a.e.. Sometimes, depending on the context, $L^p$ will simply refer to real-valued functions. We will denote by 
$$\langle\omega_1,\omega_2\rangle:=\int_M(\omega_1(x),\omega_2(x))_x\,d\mu(x)$$ the scalar product in $L^2(E)$.

From an obvious adaptation of Strichartz's proof that the Laplacian is self-adjoint on a complete manifold (see Theorem 3.13 in \cite{PRS}), we know that if $\mathcal{R}_-$ is bounded, then $\mathcal{L}=\bar{\Delta}+\mathcal{R}_+-\mathcal{R}_-$ is essentially self-adjoint on $C_0^\infty(E)$. If $\mathcal{R}_-$ is not bounded, then we will consider the Friedrichs extension of $\mathcal{L}$, which is an unbounded, self-adjoint operator on $L^2(E)$. We will make a crucial assumption on $\mathcal{L}$: we will assume that it is {\em non-negative}. By this we mean that the associated quadratic form to $\mathcal{L}$ is non-negative, that is for every $\omega\in C_0^\infty(E)$,

$$\int_M|\nabla \omega|^2+\langle\mathcal{R}\omega,\omega\rangle\geq0.$$
By the spectral theorem, one can then consider $e^{-t\mathcal{L}}$, which is a contraction semigroup on $L^2$. By elliptic estimates again, there exists a kernel $p_t^{\mathcal{L}}$ for $e^{-t\mathcal{L}}$, that is, for every $x,y\in M$, $p_t^{\mathcal{L}}(x,y)$ is a linear map from $E_y$ to $E_x$ such that for every $\omega_i\in L^2(E)$, $i=1,2$,

$$\langle e^{-t\mathcal{L}}\omega_1,\omega_2\rangle=\int_{M\times M}\left(p_t^{\mathcal{L}}(x,y)\omega_1(y),\omega_2(x)\right)_x\,d\mu(x)d\mu(y).$$
By self-adjointness of $\mathcal{L}$, the adjoint of $p_t^{\mathcal{L}}(x,y)$ is $p_t^{\mathcal{L}}(y,x)$ a.e. In addition the non-negativity of $\mathcal{L}$ implies that for a.e. $x\in M$, the operator $p_t^{\mathcal{L}}(x,x)\in \mathrm{End}(E_x)$ is symmetric and non-negative. Denote by $\|\cdot\|_{y,x}$ the Hilbert-Schmidt norm on $\mathrm{End}(E_y,E_x)$, that is

$$||A||^2_{y,x}:=\mathrm{Tr}(A^*A)=\mathrm{Tr}(AA^*),\quad A\in \mathrm{End}(E_y,E_x).$$
We will say that the heat kernel of $\mathcal{L}$ satisfies Gaussian estimates if for every $t>0$ and a.e $x,y\in M$,

\begin{equation}\label{UEL}\tag{$U\!E_{\mathcal{L}}$}
||p_t^{\mathcal{L}}(x,y)||_{y,x}\lesssim
\frac{1}{V(x,\sqrt{t})}\exp
\left(-\frac{d^{2}(x,y)}{Ct}\right).
\end{equation}

As we mentioned before, an important example of such a generalised Schr\"{o}dinger operator $\mathcal{L}$ is provided by $\Delta_k=d^*_{k+1}d_k+d_{k-1}d^*_k$, the Hodge Laplacian acting on differential $k$-forms. It obviously follows  from its definition that $\Delta_k$ is non-negative. It turns out that if $\mu$ is not the Riemannian measure, the corresponding {\em weighted} Hodge Laplacian are also of interest: such operators have been considered by E. Witten \cite{Wit} and J-M. Bismut \cite{Bis}, in order to give a new proof of the Morse inequalities, and later by E. Bueler \cite{Bue} to study the cohomology of non-compact manifolds.

\bigskip

In \cite{CDS}, Gaussian estimates for the heat kernel of such a generalised Schr\"odinger operator have been characterized, under the assumption that the negative part of the potential $\mathcal{R}$ is ``small at infinity'' in a certain sense. More precisely, we shall say that condition \eqref{condik} is satisfied by a  section $\mathcal{V}\in L^\infty_{loc}$ of the vector bundle $\mathrm{End}(E)$ on $M$ if  $M$ is non-parabolic and  there is a compact subset $K_0$ of $M$ such that
\begin{equation}\label{condik}\tag{$K$}
\sup_{x\in M}\int_{M\setminus K_0}G(x,y)\|\mathcal{V}(y)\|_y\,d\mu(y)<1,\end{equation}
where $G$ is the Green function for the Laplace operator on functions.
In a more compact way, condition \eqref{condik}  may be formulated as $\|\Delta^{-1}(\|\mathcal{V}\|\mathbf{1}_{M\setminus K_0})\|_{\infty}<1$. The standing assumption that has been made in \cite{CDS} in order to study the heat kernel of a generalised Schr\"odinger operator $\mathcal{L}=\nabla^*\nabla+\mathcal{L}$ is that $\mathcal{R}_-$ satisfies \eqref{condik}. Actually, \eqref{condik} is a generalization of the more familiar Kato class at infinity $K^\infty(M)$, defined by: $\mathcal{V}\in K^\infty(M)$ if

\begin{equation}\label{Kato}
\lim_{R\to\infty}\sup_{x\in M}\int_{M\setminus B(x_0,R)}G(x,y)\|\mathcal{V}(y)\|_y\,d\mu(y)=0,
\end{equation}
for some (all) $x_0\in M$. This is a slightly more restrictive condition than our condition \eqref{condik}. The notion of Kato class at infinity is often used as an effective tool to obtain Gaussian estimates for the heat kernel of Schr\"{o}dinger operators (see \cite{T}). Also, in \cite{D3}, a sufficient condition for a potential to satisfy \eqref{Kato} is presented, in terms of some weighted $L^p$ spaces: if \eqref{UE} holds, and there is $\varepsilon>0$ such that 

$$\mathcal{V}\in L^{\frac{\nu}{2}-\varepsilon}\left(M,\frac{d\mu(x)}{V(x,1)}\right)\cap L^{\frac{\nu'}{2}+\varepsilon}\left(M,\frac{d\mu(x)}{V(x,1)}\right),$$
where $\nu$ and $\nu'$ are the exponents from \eqref{d} and \eqref{rnu},
then $\mathcal{V}$ belongs to the Kato class at infinity $K^\infty(M)$, and thus in particular satisfies \eqref{condik}. In the case where the volume growth is polynomial, that is

$$C^{-1}r^n\leq V(x,r)\leq Cr^n,\qquad \forall x\in M,\,\forall r>0,$$
with $n>2$, the above condition on $\mathcal{V}$ is the familiar condition $\mathcal{V}\in L^{\frac{n}{2}\pm \varepsilon}$. 

Let us now quote one of the main results of \cite{CDS}:

\begin{Thm}\label{main-CDS}\cite[Theorem 1.1]{CDS}

Let $(M,\mu)$ be  a complete, non-compact, connected weighted Riemannian manifold satisfying  \eqref{d} and \eqref{UE}. 
Assume that $(M,\mu)$ satisfies the volume lower bound \eqref{dd} with  $\kappa>4$.
Let $E$ be a vector bundle with  basis $M$ and a connection $\nabla$ compatible with the metric, and let 
$$\mathcal{L}=\nabla^*_\mu\nabla+\mathcal{R}_+-\mathcal{R}_-$$
be a generalised (weighted) Schr\"{o}dinger operator on $E$, such that $\mathcal{R}_-$ satisfies condition~$\eqref{condik}$. Then the following are equivalent:

\begin{enumerate}

\item[i)] the heat kernel of $\mathcal{L}$ satisfies \eqref{UEL}.

\item[ii)]  $\mathrm{Ker}_{L^2}(\mathcal{L})=\{0\}$. 

\end{enumerate}

\end{Thm}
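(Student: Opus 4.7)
I plan to handle the two implications separately. The implication (i) $\Rightarrow$ (ii) is a short dissipation argument; (ii) $\Rightarrow$ (i) is proved by a reduction to the scalar Schrödinger setting followed by a ground-state transformation.

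For (i) $\Rightarrow$ (ii): for any compactly supported section $\omega$, the Gaussian upper bound \eqref{UEL} together with the volume lower bound \eqref{dd} (with $\kappa>4$) yields
\begin{equation*}
|e^{-t\mathcal{L}}\omega|(x) \,\leq\, \frac{C\|\omega\|_1}{V(x,\sqrt{t})} \,\longrightarrow\, 0 \quad \text{as } t\to\infty,
\end{equation*}
pointwise in $x$, since doubling combined with \eqref{dd} forces $V(x,\sqrt{t})\to\infty$ for each fixed $x$. Because $e^{-t\mathcal{L}}\omega$ also converges in $L^2$ to the orthogonal projection $P_0\omega$ onto $\mathrm{Ker}_{L^2}(\mathcal{L})$, passing to an a.e.\ convergent subsequence gives $P_0\omega=0$ for every compactly supported $\omega$, whence $P_0\equiv 0$ by density.

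For (ii) $\Rightarrow$ (i), I first reduce from the bundle to the scalar setting via Kato's inequality: applied to $(\partial_t+\mathcal{L})u=0$, it gives $(\partial_t+\Delta-V)|u|\leq 0$ distributionally, where $V(x):=\|\mathcal{R}_-(x)\|_x$. The parabolic maximum principle then provides the pointwise domination $|e^{-t\mathcal{L}}\omega|(x)\leq e^{-tH}|\omega|(x)$, with $H:=\Delta-V$, so it suffices to prove Gaussian upper bounds for the scalar Schrödinger kernel $p_t^H$. The key step is to produce a positive solution $h$ of $Hh=0$ with $1\leq h\leq C$ on $M$. Formally $h$ satisfies the integral equation $h=\mathbf{1}+\Delta^{-1}(Vh)$; splitting $V=V\mathbf{1}_{K_0}+V\mathbf{1}_{M\setminus K_0}$ according to \eqref{condik}, the operator $T_1\phi:=\Delta^{-1}(V\mathbf{1}_{M\setminus K_0}\phi)$ is a strict contraction on $L^\infty$, while $T_0\phi:=\Delta^{-1}(V\mathbf{1}_{K_0}\phi)$ is compact on $L^\infty$. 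The assumption $\kappa>4$ enters to force any $\phi\in L^\infty$ fixed by $T_0+T_1$ to automatically lie in $L^2$ (via the decay $\phi(x)\sim d(x,x_0)^{2-\kappa}$ coming from the Green function), so that the Fredholm alternative applies: either $h:=(I-T_0-T_1)^{-1}\mathbf{1}$ exists as a bounded function, or there is a non-trivial $L^2$ solution of $H\psi=0$. In the latter case, combining the domination with $\mathcal{L}\geq 0$ and self-adjointness yields a non-trivial element of $\mathrm{Ker}_{L^2}(\mathcal{L})$, contradicting (ii). The positivity of $\Delta^{-1}$ and of $V$ then forces $h\geq 1$ automatically from the fixed-point equation.

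With $h\simeq 1$ in hand, the proof concludes by a ground-state transform. The operator $\widetilde{L}u:=h^{-1}H(hu)$ is symmetric and non-negative on $L^2(M,h^2d\mu)$, kills constants, and has Dirichlet form $\int_M|\nabla u|^2\,h^2\,d\mu$. Because $h^2d\mu$ is equivalent to $\mu$, the doubling \eqref{d}, the volume lower bound \eqref{dd}, and the relative Faber-Krahn inequality characterizing \eqref{UE} all transfer to the new measure; Grigor'yan's characterization \cite{G2}, combined with the self-improvement of on-diagonal into off-diagonal Gaussian estimates \cite[Theorem 1.1]{Gr1}, then yields the full Gaussian upper bound for $p_t^{\widetilde{L}}$. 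Unwinding the conjugation via $p_t^H(x,y)=h(x)\,p_t^{\widetilde{L}}(x,y)\,h(y)$ and using $h\simeq 1$ gives the Gaussian estimate for $p_t^H$, which via the initial domination provides \eqref{UEL}. The hardest step is the construction of $h$: it is precisely there that the two hypotheses \eqref{condik} and $\mathrm{Ker}_{L^2}(\mathcal{L})=\{0\}$ must be jointly exploited, and the two-sided pinching $h\simeq 1$ is what ultimately allows Grigor'yan's machinery to apply to $\widetilde{L}$.
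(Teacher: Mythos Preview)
Your argument for (i) $\Rightarrow$ (ii) is fine. The gap is in (ii) $\Rightarrow$ (i), and it lies exactly in the sentence ``In the latter case, combining the domination with $\mathcal{L}\geq 0$ and self-adjointness yields a non-trivial element of $\mathrm{Ker}_{L^2}(\mathcal{L})$.'' This implication goes in the wrong direction. The domination $|e^{-t\mathcal{L}}\omega|\leq e^{-tH}|\omega|$ only bounds $\mathcal{L}$ from above by $H$; it cannot be used to lift an $L^2$-kernel element of $H=\Delta-\|\mathcal{R}_-\|$ to one of $\mathcal{L}$. Concretely, your scalar operator $H$ discards the non-negative part $\mathcal{R}_+$ entirely. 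If $\mathcal{R}_+$ is large on the support of $\mathcal{W}_0$, then $\mathcal{L}=\nabla^*\nabla+\mathcal{R}_+-\mathcal{R}_-$ can be strictly positive (so $\mathrm{Ker}_{L^2}(\mathcal{L})=\{0\}$) while $H=\Delta-\|\mathcal{R}_-\|$ fails to be non-negative at all, let alone have trivial $L^2$-kernel. In that situation your Fredholm alternative lands in the ``bad'' case, no function $h\simeq 1$ with $Hh=0$ exists, and the argument stalls. The reduction to the scalar operator is simply too lossy to carry the hypothesis (ii) through.

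The approach actually taken in \cite{CDS} (summarised in Section~1.5 of the present paper) avoids this by staying on the bundle. One writes $\mathcal{L}=\mathcal{H}-\mathcal{W}_0$ with $\mathcal{H}=\bar{\Delta}+\mathcal{R}_+-\mathcal{R}_-\mathbf{1}_{M\setminus K_0}$ and $\mathcal{W}_0=\mathcal{R}_-\mathbf{1}_{K_0}$. Domination by the scalar operator $\Delta-\|\mathcal{R}_-\|\mathbf{1}_{M\setminus K_0}$ is used only for $\mathcal{H}$, and there condition \eqref{condik} alone, with no kernel hypothesis, already guarantees Gaussian estimates (the negative part is globally small). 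The passage from $\mathcal{H}$ to $\mathcal{L}$ is then handled by the resolvent perturbation formula $(I+t\mathcal{L})^{-1}=(I-(I+t\mathcal{H})^{-1}t\mathcal{W}_0)^{-1}(I+t\mathcal{H})^{-1}$, and the hypothesis $\mathrm{Ker}_{L^2}(\mathcal{L})=\{0\}$ is used to show that the compact operator $(\mathcal{H}+\lambda)^{-1}\mathcal{W}_0$ has spectral radius strictly less than $1$ on $L^\infty$, uniformly in $\lambda\geq 0$. This keeps $\mathcal{R}_+$ in the picture throughout and is precisely what makes the Fredholm-type argument match the kernel hypothesis on $\mathcal{L}$ rather than on an auxiliary scalar operator.
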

 In the case where $\mathcal{L}$ is the Hodge Laplacian on $1$-forms, Theorem \ref{main-CDS} has important consequences for the gradient of the heat kernel on functions. First, let us say that that the gradient of the heat kernel on functions has Gaussian estimates if the following estimates hold:
 
\begin{equation}\label{G}\tag{$G$}
|\nabla_x p_t(x,y)|\lesssim \frac{1}{\sqrt{t}V(x,\sqrt{t})}e^{-c\frac{d^2(x,y)}{t}},\quad\forall t>0,\,\forall x,\,y\in M.
\end{equation} 
Then, one has the following result:
 
 \begin{Cor}\label{Cor-CDS}\cite[Corollary 1.13]{CDS}
 
 Let $(M,\mu)$ be  a complete, non-compact, connected weighted Riemannian manifold satisfying  \eqref{d} and \eqref{UE}. 
Recall the weighted Ricci tensor $\mathrm{Ric}_\mu:=\mathrm{Ric}-\mathscr{H}_f$, if $\mu=e^f\nu$.
Assume that $(\mathrm{Ric}_{\mu})_-$ satisfies condition~$\eqref{condik}$. Assume moreover that $M$ satisfies \eqref{dd} for some $\kappa>4$, and that $\mathrm{Ker}_{L^2}(\vec{\Delta_\mu})$, the kernel of the weighted Hodge Laplacian on $1$-forms, is trivial. Then, the Gaussian gradient estimates \eqref{G} for the heat kernel of the weighted Laplacian hold.

 \end{Cor}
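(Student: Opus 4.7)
The plan is to deduce the corollary by combining Theorem~\ref{main-CDS} (which provides Gaussian estimates for the heat kernel of the weighted Hodge Laplacian on $1$-forms) with the Coulhon--Duong principle that $\eqref{UE}$ together with $(\vec{U\!E})$ implies $\eqref{G}$.

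\textbf{Step 1: Gaussian estimates for the $1$-form heat kernel.} I apply Theorem~\ref{main-CDS} to the generalised Schr\"odinger operator $\mathcal{L}=\vec{\Delta}_{1,\mu}$. By the weighted Bochner formula \eqref{Boch-k}, $\vec{\Delta}_{1,\mu}=\nabla^*\nabla+\mathrm{Ric}_\mu$, so the potential is $\mathcal{R}=\mathrm{Ric}_\mu$ and $\mathcal{R}_-=(\mathrm{Ric}_\mu)_-$ satisfies condition $\eqref{condik}$ by assumption. The standing hypotheses \eqref{d}, \eqref{UE} and \eqref{dd} with $\kappa>4$ are all assumed in the corollary, and the assumption $\mathrm{Ker}_{L^2}(\vec{\Delta}_\mu)=\{0\}$ gives condition (ii) of Theorem~\ref{main-CDS}. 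Hence condition (i) holds, i.e.
\[
\|\vec{p_t}^{1,\mu}(x,y)\|_{y,x}\lesssim \frac{1}{V(x,\sqrt{t})}\exp\!\left(-\frac{d^{2}(x,y)}{Ct}\right),\quad \forall t>0,\,\text{a.e. } x,y\in M.
\]

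\textbf{Step 2: from $(\vec{U\!E})$ to $\eqref{G}$.} The key ingredient is the intertwining
\[
d\circ e^{-t\Delta_\mu}=e^{-t\vec{\Delta}_{1,\mu}}\circ d,
\]
valid on smooth functions because $\vec{\Delta}_{1,\mu}$ is constructed precisely so that $d_\mu^* d$ matches the scalar weighted Laplacian. Writing $p_t=p_{t/2}\ast p_{t/2}$ via the semigroup property and differentiating in the first variable,
\[
\nabla_x p_t(x,y)=\int_M \nabla_x p_{t/2}(x,z)\, p_{t/2}(z,y)\,d\mu(z),
\]
the intertwining identifies $\nabla_x p_{t/2}(x,\cdot)$ with the kernel of $\nabla e^{-t\Delta_\mu/2}$, whose $L^2\to L^2$ norm is $\lesssim t^{-1/2}$ by the spectral theorem (applied to $\Delta_\mu^{1/2}e^{-t\Delta_\mu/2}$). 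Combined with the $L^2$ Gaussian off-diagonal decay of $\nabla e^{-t\Delta_\mu/2}$ extracted from $(\vec{U\!E})$ and of $e^{-t\Delta_\mu/2}$ extracted from \eqref{UE}, a Cauchy--Schwarz / Davies perturbation argument (carried out in the weighted setting exactly as in Coulhon--Duong) yields the pointwise bound
\[
|\nabla_x p_t(x,y)|\lesssim \frac{1}{\sqrt{t}\,V(x,\sqrt{t})}\exp\!\left(-c\,\frac{d^{2}(x,y)}{t}\right),
\]
which is \eqref{G}.

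\textbf{Expected obstacle.} Step~1 is essentially a verification of hypotheses. The only delicate point lies in Step~2: translating the Hilbert--space intertwining between $d$ and the two heat semigroups into a pointwise Gaussian bound on $|\nabla_x p_t(x,y)|$. This requires the full Coulhon--Duong machinery (Davies--Gaffney estimates, Davies' exponential perturbation, and doubling-based chaining of balls), but all the necessary ingredients are standard in the weighted setting since \eqref{d}, \eqref{UE} and the self-adjointness of $\Delta_\mu$ and $\vec{\Delta}_{1,\mu}$ hold verbatim with the measure $\mu$ replacing the Riemannian measure.
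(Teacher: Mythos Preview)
Your proposal is correct and follows exactly the approach the paper indicates: the corollary is quoted from \cite{CDS} without proof here, but the paper explicitly describes the mechanism in the introduction (``According to \cite{CD3}, if Gaussian type upper estimates are available for both the heat kernel of the scalar Laplacian $\Delta$, and the heat kernel of the Hodge Laplacian $\vec{\Delta}$, then \eqref{G} holds''), and your Step~1 is precisely the specialization of Theorem~\ref{main-CDS} to $\mathcal{L}=\vec{\Delta}_{1,\mu}$ via the Bochner formula \eqref{Boch-k}. There is nothing to add.
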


\subsection{Our results}

Our work in this article was motivated by the following two questions. The first one concerns a possible extension of Theorem \ref{main-CDS}:

\begin{Ques}\label{Q1}

In the setting of Theorem \ref{main-CDS}, what happens if $\mathrm{Ker}_{L^2}(\mathcal{L})$ is not trivial? If $\Pi$ is the $L^2$-orthogonal projection onto $\mathrm{Ker}_{L^2}(\mathcal{L})$, is it true that the operator $e^{-t\mathcal{L}}(I-\Pi)$ has Gaussian estimates?

\end{Ques}
For example, in the case of $\vec{\Delta}$, the Hodge Laplacian on $1$-forms, one of the simplest manifolds for which Question \ref{Q1} is relevant is $\R^n\sharp\R^n$, a so-called {\em connected sum of two Euclidean spaces}, which is a manifold obtained by taking two disjoint copies of $\R^n$ on which the unit ball has be removed, and connecting them smoothly by a tube joining their boundaries. Indeed, if $n\geq3$, it is known that on $\R^n\sharp\R^n$, $\mathrm{Ker}_{L^2}(\vec{\Delta})\neq\{0\}$  (see e.g. \cite{CCH}), and hence the Gaussian estimate for the heat kernel of the Hodge Laplacian on $1$-forms ($\vec{U\!E}$) does not hold. Even in this particular case, it is an interesting open problem to understand the action of the semi-group $e^{-t\vec{\Delta}}$ on $L^p$ spaces (see the recent results in \cite{MO}). The second question concerns Corollary \ref{Cor-CDS}:

\begin{Ques}\label{Q2}

In the setting of Corollary \ref{Cor-CDS}, does the gradient estimate for the heat kernel \eqref{G} hold under a weaker assumption than $\mathrm{Ker}_{L^2}(\vec{\Delta})=\{0\}$? 

\end{Ques}
In fact, Questions \ref{Q1} and \ref{Q2} are intimately related: indeed, in our setting Gaussian estimates for $e^{-t\vec{\Delta}}(I-\Pi)$, where $\vec{\Delta}$ is the (weighted) Hodge Laplacian on $1$-forms, imply Gaussian gradient estimates for the scalar heat kernel. Actually, it turns out that an even weaker statement is true (see the proof of Theorem \ref{gradient}):

\begin{Pro}\label{forms_imply_grad}

Let $(M,\mu)$ be a complete weighted Riemannian manifold, satisfying \eqref{d}, \eqref{vol_u} and \eqref{UE}. Let $\Pi$ be the orthogonal projection onto $\mathrm{Ker}_{L^2}(\vec{\Delta}_\mu)$. Assume that

$$\sup_{t>0}||V_{\sqrt{t}}^{\frac{1}{2}}e^{-t\vec{\Delta}_\mu}(I-\Pi)||_{2\to\infty}<+\infty.$$
Then, the Gaussian gradient estimates \eqref{G} for the scalar heat kernel hold.

\end{Pro}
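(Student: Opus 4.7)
The plan rests on the intertwining $d\,e^{-s\Delta} = e^{-s\vec{\Delta}_\mu}\,d$, together with the observation that every $\omega\in\mathrm{Ker}_{L^2}(\vec{\Delta}_\mu)$ is coclosed on the complete manifold $M$, hence $L^2$-orthogonal to $d(C_0^\infty(M))$. A routine approximation argument (using that $C_0^\infty(M)$ is dense in the domain of $d$ by completeness of $M$, together with the fact that $e^{-s\Delta}f$ lies in $\mathrm{Dom}(\Delta^{1/2})=\mathrm{Dom}(d)$ for $s>0$) then gives $\Pi\bigl(d\,e^{-s\Delta}f\bigr)=0$ for every $f\in L^2$. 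Decomposing $e^{-t\Delta}=e^{-(t/2)\Delta}e^{-(t/2)\Delta}$ and using this orthogonality yields the key identity
\[
\nabla e^{-t\Delta}f \;=\; e^{-(t/2)\vec{\Delta}_\mu}\,(I-\Pi)\bigl(d\,e^{-(t/2)\Delta}f\bigr).
\]

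Next, combining the hypothesis (applied at time $t/2$) with the doubling property \eqref{d} and the analyticity estimate $\|d\,e^{-(t/2)\Delta}\|_{2\to 2} = \|\Delta^{1/2}e^{-(t/2)\Delta}\|_{2\to 2}\leq C t^{-1/2}$ (immediate from the spectral theorem) produces the on-diagonal $L^2\to L^\infty$ bound
\[
|\nabla e^{-t\Delta}f|(x)\;\leq\;\frac{C}{\sqrt{t}\,V(x,\sqrt{t})^{1/2}}\,\|f\|_2.
\]
Specializing $f=p_{t/2}(y,\cdot)$, for which \eqref{UE} together with \eqref{vol_u} gives $\|f\|_2 = p_t(y,y)^{1/2}\lesssim V(x,\sqrt{t})^{-1/2}$, yields the on-diagonal kernel estimate
\[
|\nabla_x p_t(x,y)|\;\lesssim\;\frac{1}{\sqrt{t}\,V(x,\sqrt{t})}.
\]

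The remaining task, and the main technical hurdle, is to inject the Gaussian factor $e^{-c\,d(x,y)^2/t}$. My plan is to rerun the argument above after Davies-type conjugation: for a 1-Lipschitz function $\xi$ on $M$ and a parameter $\alpha\geq 0$, conjugation by $e^{\alpha\xi}$ inflates the $L^2$-operator norms of the self-adjoint semigroups $e^{-s\Delta}$ and $e^{-s\vec{\Delta}_\mu}$ by at most $e^{\alpha^2 s}$, and the key intertwining identity is preserved because $d$ transforms cleanly as $d\mapsto d+\alpha\,d\xi\wedge$. Optimizing $\alpha = d(x,y)/(2Ct)$ with $\xi$ a smooth truncation of $d(y,\cdot)$ then produces the Gaussian factor. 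The genuine difficulty is that the spectral projector $\Pi$ does not commute with multiplication by $e^{\alpha\xi}$, so the assumed $L^2\to L^\infty$ bound on $e^{-t\vec{\Delta}_\mu}(I-\Pi)$ must be upgraded to a weighted analogue with controlled $e^{C\alpha^2 t}$ cost; this should follow from extra structural information on $\mathrm{Ker}_{L^2}(\vec{\Delta}_\mu)$ (pointwise decay of $L^2$-harmonic $1$-forms), which in turn is expected in the ambient setting of \cite{CDS}. I anticipate that reconciling the Davies conjugation with the projector $\Pi$ is the crux of the argument, and the rest is bookkeeping.
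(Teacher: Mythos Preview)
Your argument up through the on-diagonal bound is correct and matches the paper: the intertwining $d\,e^{-s\Delta}=e^{-s\vec{\Delta}_\mu}d$ together with the orthogonality $\Pi(d\,e^{-s\Delta}f)=0$ (this is precisely the paper's Lemma~\ref{form1} for $k=0$) gives $(G_{2,\infty})$, and specializing to $f=p_{t/2}(\cdot,y)$ yields the pointwise on-diagonal estimate $(G_\infty)$.

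The gap is in your final step. You correctly identify that Davies conjugation does not commute with $\Pi$, and your proposed fix---invoking ``extra structural information on $\mathrm{Ker}_{L^2}(\vec{\Delta}_\mu)$''---is not available: Proposition~\ref{forms_imply_grad} is stated without any such hypothesis, and there is no mechanism to control $e^{\alpha\xi}\,e^{-t\vec{\Delta}_\mu}(I-\Pi)\,e^{-\alpha\xi}$ from the bare $L^2\to L^\infty$ assumption. So the Davies route genuinely stalls here.

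The paper sidesteps this entirely. The key observation is that once $(G_\infty)$ (equivalently $(G_{2,\infty})$, or after composing with $e^{-t\Delta}$, $(G_{1,\infty})$) is in hand, the statement to be proved is a \emph{purely scalar} one: it concerns only $\nabla e^{-t\Delta}$ and makes no further reference to $\vec{\Delta}_\mu$ or $\Pi$. The off-diagonal Gaussian factor is then injected by the Phragm\'en--Lindel\"of technique of Coulhon--Sikora: one studies the analytic function $z\mapsto \langle X,\nabla e^{-z\Delta}f\rangle$ on the right half-plane, combining the complex-time Gaussian bounds on $p_z$ (available from \eqref{UE} alone, see \cite[Corollary~4.5]{CS}) with the Davies--Gaffney estimate for $\nabla e^{-t\Delta}$ and the already-established $(G_{r,s})$. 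This is exactly the content of the paper's Proposition~\ref{grad_off_equiv}(i), which converts $(G_{1,\infty})$ into $(G^{\mathrm{off}}_{1,\infty})$, i.e.\ into the pointwise estimate \eqref{G}. Equivalently, one may simply cite \cite[Theorem~4.11]{CS}, which the paper records as the equivalence $(G_\infty)\Leftrightarrow(G)$. The point is that the projector $\Pi$ has already done its job in producing the on-diagonal bound; it plays no role whatsoever in the off-diagonalization.
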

The result of Proposition \ref{forms_imply_grad} is reminiscent of the classical fact, first observed in \cite{CD2}, that Gaussian estimates for the heat kernel on forms have consequences for the gradient of the heat kernel on functions. It is known that the Gaussian gradient estimates for the heat kernel \eqref{G} cannot hold on $\R^n\sharp\R^n$ (see Remark \ref{grad_con_sum}), so the answer to Question \ref{Q1} is {\em no in general}. Therefore, in order to proceed one needs to make an extra assumption. For $q\in [1,2)$, we introduce the following assumption for the generalised Schr\"odinger operator $\mathcal{L}$:

\begin{equation}\label{Ker}\tag{$\mathrm{Ker}_q$}
\mathrm{Ker}_{L^2}(\mathcal{L})\subset L^q
\end{equation} 
In \cite[Proposition 11]{D2}, it was established that \eqref{Ker} holds for all $q\in (\frac{n}{n-2},2)$ under the assumptions that the Sobolev inequality \eqref{Sob} holds, that the negative part of $\mathcal{R}$ is in $L^{\frac{n}{2}\pm\varepsilon}$ for some $\varepsilon>0$, and that the volume of balls is Euclidean. Also, the results of \cite{GS} show that the behaviour at infinity of $L^2$ harmonic $k$-forms govern the boundedness of the Riesz transform $(d+d^*)\vec{\Delta}_k^{-1/2}$ on $k$-forms, and we will see that the same kind of phenomenon occurs for the heat operator. 

For us, the assumption \eqref{Ker} will be useful for all $q$'s in the range $[1,2)$, and the lower $q$, the better. We will explain later in details the significance and relevance of \eqref{Ker}, but for the moment let us present our first main result, which partially answers Question \ref{Q1}:

\begin{theorem}\label{pi_heat}

Assume that $M$ satisfies \eqref{d}, \eqref{UE} and \eqref{vol_u}. Furthermore, assume that $M$ satisfies the volume lower bound \eqref{dd} with $\kappa>8$. Let $E$ be a vector bundle with basis $M$, endowed with a connection $\nabla$ compatible with the metric, and let

$$\mathcal{L}=\nabla^*\nabla+\mathcal{R}_+-\mathcal{R}_-$$
be a generalised Schr\"odinger operator on $E$, such that $\mathcal{R}_-$ satisfies condition \eqref{condik}. Assume that \eqref{Ker} holds, and let $\Pi$ be the $L^2$ orthogonal projection onto $\mathrm{Ker}_{L^2}(\mathcal{L})$. Then for all $q\leq r\leq  s\leq q'$,

\begin{equation}\label{VPiV}
\sup_{t>0} ||V_{\sqrt{t}}^{\frac{1}{r}-\frac{1}{s}}e^{-t\mathcal{L}}(I-\Pi)||_{r\to s}<\infty.
\end{equation}
Furthermore, if \eqref{Ker} holds for $q=1$ and the kernel $k_t(x,y)$ of $e^{-t\mathcal{L}}(I-\Pi)$ is continuous, then it has on-diagonal estimates:

\begin{equation}\label{on-diago}
||k_t(x,x)||_{x,x}\lesssim \frac{1}{V(x,\sqrt{t})}.
\end{equation}

\end{theorem}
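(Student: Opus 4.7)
The strategy is to establish two complementary uniform bounds on $S(t):=e^{-t\mathcal L}(I-\Pi)$, namely a decay endpoint $\|S(t)\|_{q\to 2}\lesssim V(\sqrt t)^{-(1/q-1/2)}$ and uniform $L^r\to L^r$ boundedness for $r\in[q,q']$, and then close the full statement by self-adjointness and Riesz--Thorin interpolation. Two preparatory steps are in order. First, combining \eqref{condik} with a Moser iteration on $|\omega|$---whose Sobolev-type input follows from \eqref{d}, \eqref{UE} and the volume lower bound \eqref{dd} with $\kappa>8$---the hypothesis \eqref{Ker} upgrades automatically to $\mathrm{Ker}_{L^2}(\mathcal L)\subset L^r$ for every $r\in[q,\infty]$; together with the finite-dimensionality of $\mathrm{Ker}_{L^2}(\mathcal L)$ (a standard consequence of \eqref{condik}, as in \cite{CDS}), this makes $\Pi$ bounded $L^r\to L^s$ for all $q\le r\le s\le q'$. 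Second, by Kato's inequality $|\nabla|\omega|\,|\le|\nabla\omega|$, the semigroup of the positive-potential operator $\bar{\mathcal L}:=\nabla^*\nabla+\mathcal R_+$ is pointwise dominated by the scalar heat semigroup, $|e^{-t\bar{\mathcal L}}\omega|\le e^{-t\Delta}|\omega|$, so by \eqref{UE} it has Gaussian upper bounds and in particular $\|e^{-t\bar{\mathcal L}}\|_{r\to s}\lesssim V(\sqrt t)^{-(1/r-1/s)}$ for $1\le r\le s\le\infty$.

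The heart of the proof is the endpoint decay
$$\sup_{t>0}\,V(\sqrt t)^{1/q-1/2}\,\|S(t)\|_{q\to 2}<\infty. \qquad (\star)$$
Starting from the Duhamel identity $e^{-t\mathcal L}=e^{-t\bar{\mathcal L}}+\int_0^t e^{-(t-s)\bar{\mathcal L}}\,\mathcal R_-\,e^{-s\mathcal L}\,ds$, composing with $(I-\Pi)$ and iterating, the integral collapses, by Kato's inequality, onto the scalar resolvent $\Delta^{-1}(\|\mathcal R_-\|\,\cdot\,)$, which by \eqref{condik} is a strict contraction on $L^\infty$ off a compact set. This enables a Neumann-series absorption argument in the spirit of the proof of Theorem \ref{main-CDS}, now carried out modulo $\Pi$: the insertion of $(I-\Pi)$ ensures the remainder carries no static harmonic component, while the $L^q$-regularity granted by \eqref{Ker} (upgraded in the first preliminary) converts the otherwise bare $L^2$-contractivity of $e^{-s\mathcal L}$ on $\mathrm{Ker}_{L^2}(\mathcal L)^\perp$ into genuine $L^q\to L^2$ decay at the rate $V(\sqrt s)^{-(1/q-1/2)}$, allowing the iteration to close. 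This Neumann-series closing is the main technical obstacle, precisely because no spectral gap at $0$ is assumed; the role of \eqref{Ker} here is essential, not merely to certify that $\Pi$ is defined on $L^q$.

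A parallel (and simpler) use of the Duhamel identity, taking pointwise norms via Kato's inequality, shows that $|e^{-t\mathcal L}\omega|$ is pointwise dominated by a scalar Schr\"odinger semigroup $e^{-t(\Delta-\|\mathcal R_-\|)}|\omega|$; by \eqref{condik} this scalar semigroup has Gaussian-type upper bounds and is thus uniformly bounded on every $L^r$, so $\|S(t)\|_{r\to r}\lesssim 1$ uniformly in $t$ for $r\in[q,q']$. Combining $(\star)$ with its dual $\sup_t V(\sqrt t)^{1/2-1/q'}\|S(t)\|_{2\to q'}<\infty$ (by self-adjointness) and with the composition bound $\|S(t)\|_{q\to q'}\lesssim V(\sqrt t)^{-(1/q-1/q')}$ obtained from $S(2t)=S(t)^2$ and doubling, Riesz--Thorin interpolation among the three corner estimates at $(q,q)$, $(q,q')$, $(q',q')$ produces $\|S(t)\|_{r\to s}\lesssim V(\sqrt t)^{-(1/r-1/s)}$ throughout $q\le r\le s\le q'$; replacing the constant $V(\sqrt t)$ by the multiplier $V_{\sqrt t}$ via \eqref{vol_u} gives \eqref{VPiV}. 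Finally, when \eqref{Ker} holds with $q=1$ and $k_t$ is continuous, the endpoint case reads $\|V_{\sqrt t}\,S(t)\|_{1\to\infty}\lesssim 1$; since for an integral operator with continuous kernel the $L^1\to L^\infty$ norm equals $\sup_{x,y}\|k_t(x,y)\|_{y,x}$, this yields $\|k_t(x,y)\|_{y,x}\lesssim V(x,\sqrt t)^{-1}$ uniformly, and at $y=x$ this is \eqref{on-diago}.
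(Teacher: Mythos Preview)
Your proposal has two genuine gaps that cannot be patched without importing the paper's actual machinery.

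\medskip

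\textbf{First gap: the uniform $L^r\to L^r$ boundedness via domination is unjustified.} You claim that the scalar semigroup $e^{-t(\Delta-\|\mathcal R_-\|)}$ has Gaussian-type upper bounds ``by \eqref{condik}''. This is false in general. Condition \eqref{condik} only controls $\|\mathcal R_-\|$ outside a compact set; the compactly supported part $\mathcal W_0$ may be large. By Theorem~\ref{main-CDS} applied to the scalar operator $\Delta-\|\mathcal R_-\|$, Gaussian bounds would be \emph{equivalent} to $\mathrm{Ker}_{L^2}(\Delta-\|\mathcal R_-\|)=\{0\}$, and nothing in the hypotheses rules out a scalar ground state. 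So the domination $|e^{-t\mathcal L}\omega|\le e^{-t(\Delta-\|\mathcal R_-\|)}|\omega|$ does not deliver uniform $L^r$ bounds. In the paper the uniform $L^p$ boundedness of $e^{-t\mathcal L}$ is not obtained by domination at all: it is Proposition~\ref{Lp-bdd}, proved \emph{after} the endpoint $L^q\to L^2$ estimate, via finite speed of propagation of $\cos(t\sqrt{\mathcal L})$ and a subordination formula.

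\medskip

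\textbf{Second gap: the Neumann-series closing for $(\star)$ is the whole proof, and you have not supplied it.} You write that after Duhamel iteration ``the integral collapses onto the scalar resolvent $\Delta^{-1}(\|\mathcal R_-\|\cdot)$, which by \eqref{condik} is a strict contraction on $L^\infty$ off a compact set'', and that inserting $(I-\Pi)$ ``converts the otherwise bare $L^2$-contractivity \ldots\ into genuine $L^q\to L^2$ decay''. Two problems. First, \eqref{condik} gives a contraction only for the \emph{non-compactly-supported} piece $\mathcal W_\infty$; the Duhamel expansion with the full $\mathcal R_-$ does not close as a Neumann series. Second, $(I-\Pi)$ does not commute with $\mathcal R_-$, so it cannot simply be ``pushed through'' the iteration; the sentence about converting $L^2$-contractivity into $L^q\to L^2$ decay is circular, since that decay is precisely the conclusion $(\star)$. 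The paper's mechanism is genuinely different: it perturbs the \emph{operator} to $\mathcal L+\alpha\Pi$ (trivial kernel), writes the resolvent identity
\[
(I+t(\mathcal L+\alpha\Pi))^{-1}=\bigl(I-(I+t\mathcal H)^{-1}t(\mathcal W_0-\alpha\Pi)\bigr)^{-1}(I+t\mathcal H)^{-1},
\]
with $\mathcal H=\bar\Delta+\mathcal R_+-\mathcal W_\infty$ (so the only perturbation is the \emph{compactly supported} $\mathcal W_0$ together with the finite-rank $\alpha\Pi$), and proves via a symmetrized operator $\mathcal A_\lambda=(\mathcal H+\lambda)^{-1/2}(\mathcal W_0-\alpha\Pi)(\mathcal H+\lambda)^{-1/2}$ and compactness that the spectral radius of $(\mathcal H+\lambda)^{-1}(\mathcal W_0-\alpha\Pi)$ on $L^s$, $s\in[2,q']$, is $\le 1-\eta$ uniformly in $\lambda$ (Proposition~\ref{SpecRad}). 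The condition $\kappa>8$ is used precisely here, to make $\mathcal H^{-2}\mathcal W_0$ and $(\mathcal H+\lambda)^{-1/2}\Pi$ bounded on $L^2$. This spectral-radius step is the heart of the proof and has no analogue in your sketch.

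\medskip

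Once $(\star)$ is available, your final interpolation paragraph and the on-diagonal argument for $q=1$ are essentially the same as the paper's.
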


\begin{Rem}
{\em 
However, we do not know how to pass from the on-diagonal estimate \eqref{on-diago} to the full Gaussian estimates for the kernel of $e^{-t\mathcal{L}}(I-\Pi)$. Indeed, passing from on-diagonal to off-diagonal estimates usually requires finite speed of propagation for the associated wave operator (or, equivalently, Davies-Gaffney estimates), see \cite{Sik}, but in the proof we use the operator $\mathcal{L}+\alpha \Pi$, whose wave propagator does not have finite speed of propagation, because of the non-locality of the projection $\Pi$. This non-locality feature of the operator $\mathcal{L}+\alpha \Pi$ is also the reason why in Theorem \ref{pi_heat} we need to assume the uniformity of the volume growth \eqref{vol_u}. However, we feel that the result of Theorem \ref{pi_heat} should hold without this extra assumption.
}
\end{Rem}
Theorem \ref{pi_heat} has for consequence the following result:

\begin{Cor}\label{unif_bdd}

Under the assumptions of Theorem \ref{pi_heat}, the semi-group $e^{-t\mathcal{L}}$ is uniformly bounded on $L^p$, for every $p\in [q,q']$.

\end{Cor}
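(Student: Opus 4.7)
The plan is to decompose, using $\mathcal{L}\Pi=0$ (so that $e^{-t\mathcal{L}}\Pi=\Pi$),
\[
e^{-t\mathcal{L}} \;=\; e^{-t\mathcal{L}}(I-\Pi) \;+\; \Pi,
\]
and bound each summand on $L^p$ separately. Taking $r=s=p$ in \eqref{VPiV} of Theorem~\ref{pi_heat} immediately yields
\[
\sup_{t>0}\,\|e^{-t\mathcal{L}}(I-\Pi)\|_{L^p\to L^p} \;<\; \infty \qquad \text{for every } p\in[q,q'],
\]
so the problem reduces to proving that the spectral projector $\Pi$ itself is bounded on $L^p$ for $p\in[q,q']$.

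Since $\Pi$ is self-adjoint on $L^2$ with $\|\Pi\|_{2\to 2}\leq 1$, duality combined with Riesz--Thorin interpolation reduces this further to showing that $\Pi$ is bounded on $L^q$ (boundedness on $L^{q'}$ then follows by duality, and on the intermediate $L^p$ by interpolation with $L^2$). Under condition~\eqref{condik} the kernel $\mathrm{Ker}_{L^2}(\mathcal{L})$ is finite-dimensional (standard in the setting of Theorem~\ref{main-CDS}, via a Birman--Schwinger-type argument); choose an $L^2$-orthonormal basis $h_1,\ldots,h_N$, each lying in $L^q$ by assumption~\eqref{Ker}. Then
\[
\Pi\omega \;=\; \sum_{i=1}^{N}\langle\omega,h_i\rangle\,h_i,
\]
and $\|\Pi\omega\|_q\lesssim\|\omega\|_q$ will follow once we know that each $h_i$ lies in $L^{q'}$, for then the pairing $\langle\omega,h_i\rangle$ is controlled by $\|\omega\|_q\|h_i\|_{q'}$.

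The key remaining step is therefore to show that $L^2$-harmonic sections for $\mathcal{L}$ that lie in $L^q$ automatically lie in $L^{q'}$. For this I would invoke elliptic regularity applied to the equation $\mathcal{L} h_i = 0$: Moser iteration, using the local Sobolev inequality provided by \eqref{d} and \eqref{UE} together with the local integrability of $\mathcal{R}_-$, yields the pointwise bound
\[
|h_i(x)| \;\lesssim\; V(x,1)^{-1/2}\,\|h_i\|_{L^2(B(x,2))}\;\leq\;V(x,1)^{-1/2}\,\|h_i\|_2.
\]
By the uniform-volume assumption~\eqref{vol_u}, $V(x,1)$ is bounded below uniformly in $x$, hence $h_i\in L^\infty$. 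Interpolating with $h_i\in L^q$ gives $h_i\in L^r$ for all $r\in[q,\infty]$, in particular $r=q'$, which completes the argument.

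The main technical obstacle is making the Moser iteration constants uniform in $x\in M$: the local norms of $\mathcal{R}_-$ are not a priori uniform, so one has to exploit both the smallness-at-infinity condition~\eqref{condik} on $\mathcal{R}_-$ and the uniformity~\eqref{vol_u} of the underlying geometry to prevent the constants from blowing up along sequences $x\to\infty$.
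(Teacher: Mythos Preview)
Your argument is formally valid once Theorem~\ref{pi_heat} is granted, but there is a logical circularity with the paper's structure that you should be aware of. In the paper, the full conclusion \eqref{VPiV} of Theorem~\ref{pi_heat} (for \emph{all} $q\le r\le s\le q'$) is obtained only \emph{after} invoking precisely the statement of Corollary~\ref{unif_bdd}: the proof first establishes \eqref{VPiV} only in the restricted range $q\le r\le 2\le s\le q'$ (equation~\eqref{iterate8}), then states as a CLAIM that $e^{-t\mathcal{L}}$ is uniformly bounded on $L^p$, $p\in[q,q']$, and uses this claim (together with boundedness of $\Pi$) to interpolate and reach the full range. So taking $r=s=p$ in \eqref{VPiV}, as you do, presupposes what must be proved. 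The paper's independent proof of the claim is Proposition~\ref{Lp-bdd}, which takes as input only the $L^p\to L^2$ estimate \eqref{iterate7} and boundedness of $\Pi$ on $L^p$, and proceeds via a subordination formula $e^{-t\mathcal{L}}=\int_0^\infty s^{a+1/2}F_a(\sqrt{st\mathcal{L}})e^{-s/4}\,ds$ combined with the finite speed of propagation of $\cos(r\sqrt{\mathcal{L}})$: one shows $\sup_{r>0}\|F_a(r\sqrt{\mathcal{L}})\|_{p\to p}<\infty$ by a Cotlar-type decomposition into near-diagonal and far-diagonal pieces, the latter handled by the support property of $F_a(r\sqrt{\mathcal{L}})$.

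Regarding your second step, the $L^\infty$ bound on the $h_i$, your instinct is right but the route you propose (uniform Moser iteration, worrying about the local norms of $\mathcal{R}_-$) is unnecessarily delicate. The paper already records this as Lemma~\ref{Lp-spaces}: under \eqref{d}, \eqref{UE} and $\kappa>4$, one has $\mathrm{Ker}_{L^2}(\mathcal{L})\subset L^p$ for every $p\in[q,\infty]$. This follows from \cite[Lemma~3.1]{CDS}, which uses the identity $\omega=-\mathcal{H}^{-1}\mathcal{W}_0\omega$ (Lemma~\ref{minig}) and the Gaussian bounds for $e^{-t\mathcal{H}}$ rather than local elliptic regularity. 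With this in hand, boundedness of $\Pi$ on $L^p$ for $p\in[q,q']$ is exactly as you wrote.
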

Now, we turn to the consequences of Theorem \ref{pi_heat} for Question \ref{Q2}. We first introduce weaker integral estimates for the gradient of the scalar heat kernel:

\begin{equation}\label{Gp}\tag{$G_p$}
||\nabla e^{-t\Delta}||_{p\to p}\lesssim \frac{1}{\sqrt{t}},\quad\forall t>0.
\end{equation}
It is known that if $M$ is complete, then \eqref{Gp} always holds for $p\in (1,2]$ (see \cite{Chen}; the statement is made for unweighted manifolds, but the proof extends without difficulty to the weighted case as well). Let us also mention that the weaker $L^p$ estimates $||\nabla e^{-t\Delta}||_{p\to p}\lesssim t^{-1/p}$ have been recently established in \cite{MO}, under some local uniformity of the negative part of the Ricci curvature. In the case $p=\infty$, the gradient estimate that we will consider is a bit different. It writes:

\begin{equation}\label{Ginf}\tag{$G_\infty$}
\sup_{x,y\in M}|\nabla_x p_t(x,y)|\lesssim \frac{1}{\sqrt{t}V(y,\sqrt{t})},\quad \forall t>0.
\end{equation}
According to \cite{ACDH}, \eqref{Ginf} implies \eqref{Gp}, for all $p\in (2,\infty)$, and moreover by \cite[Theorem 4.11]{CS}, \eqref{Ginf} is equivalent to the full Gaussian upper bound \eqref{G} for the gradient of the heat kernel. The $L^p$ estimates \eqref{Gp} for the gradient of the heat kernel have been introduced in \cite{ACDH}, in order to study the $L^p$ boundedness of the Riesz transform $d\Delta^{-1/2}$. Indeed, a well-known argument involving the analyticity of the semigroup $e^{-t\Delta}$ on $L^p$ shows that if the Riesz transform $d\Delta^{-1/2}$ is bounded on $L^p$ for some $p\in (1,\infty)$, then \eqref{Gp} holds (see \cite{ACDH}). The optimal range of boundedness for the Riesz transform is known in a few cases, for instance it is known for asymptotically Euclidean -or more generally, conical- manifolds (see \cite{CCH}, \cite{GH}). This indirectly yields \eqref{Gp} for some values of $p$. If $p\in (2,\infty)$ and $M$ satisfies the Poincar\'e inequalities \eqref{P}, by the main result of \cite{ACDH} the boundedness on $L^p$ of the Riesz transform and \eqref{Gp} are essentially equivalent. However, it is an open problem to provide weaker conditions than \eqref{P} for the converse that would apply to manifolds such as the connected sum of two Euclidean spaces (see however the recent \cite{BCF}). Also, on the connected sum of two Euclidean spaces, the Riesz transform is bounded on $L^p$ if and only if $p\in (1,n)$ ($p\in (1,2]$ if $n=2$). While it is known that \eqref{Gp} cannot hold for $p>n$, what happens in the limiting case $p=n$, $n\geq3$ is still unknown. In addition, if one is to study gradient heat kernel estimates through the Riesz transform, a serious limitation that one faces is that in this way, one can only get results in the range $p\in (1,\infty)$, but the case $p=\infty$ is excluded. Hence, the Gaussian gradient heat kernel estimates \eqref{G} are out of reach by this method. In the contrary, our method in the present article allows us to deal with the case $p=\infty$, as will be demonstrated by our next result. But first, let us introduce another family of inequalities for the gradient of the heat kernel:

\begin{equation}\label{grad_r_s}\tag{$G_{r,s}$}
\sup_{t>0}\left\| V_{\sqrt{t}}^{\frac{1}{r}-\frac{1}{s}}\sqrt{t}\nabla e^{-t\Delta}\right\|_{r\to s} <+\infty,
\end{equation}
and its off-diagonal counterpart

\begin{equation}\label{grad_off}\tag{$G^{\mathrm{off}}_{r,s}$}
V(x,\sqrt{t})^{\frac{1}{r}-\frac{1}{s}}\left\| \chi_{B(x,\sqrt{t})}\sqrt{t}\nabla e^{-t\Delta}\chi_{B(y,\sqrt{t})}\right\|_{r\to s} \lesssim e^{-\frac{d^2(x,y)}{Ct}},\quad \forall x,\,y\in M,\,t>0,
\end{equation}
where $\chi_B$ denotes the characteristic function of the set $B\subset M$. Since the gradient of the heat kernel satisfies the off-diagonal Davies-Gaffney estimates (see \cite[Lemma 3.8]{AMR}), $G^{\mathrm{off}}_{2,2}$ holds. Using the tools developed in \cite{CS}, one can show:

\begin{Pro}\label{grad_off_equiv}

Let $(M,\mu)$ be  a complete, non-compact, connected weighted Riemannian manifold satisfying  \eqref{d} and \eqref{UE}.  Then,

\begin{itemize}

\item[(i)] if $1\leq r\leq s\leq \infty$, then \eqref{grad_r_s} implies \eqref{grad_off}.

\item[(ii)] if $2\leq q'\leq \infty$, then $(G^{\mathrm{off}}_{2,q'})$ implies \eqref{Gp} for any $p \in [2,q']$.

\item[(iii)] if $1\leq q\leq 2$, then $(G^{\mathrm{off}}_{q',2})$ implies \eqref{Gp} for any $p \in [q,2]$.

\end{itemize}
As a consequence, if $q\in [1,2)$ (resp. $q'\in (2,\infty)$), then $(G_{q,2})$ (resp. $(G_{2,q'})$) implies \eqref{Gp} for any $p \in [q,2]$ (resp. $p\in [2,q']$).

\end{Pro}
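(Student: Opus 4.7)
The three parts are logically independent. Part (i) is a self-improvement from on-diagonal $L^r$--$L^s$ bounds to their Gaussian off-diagonal counterparts; parts (ii) and (iii) are two parallel instances of a Whitney-type summation converting off-diagonal $L^r$--$L^s$ bounds back to $L^p$--$L^p$ boundedness of $\sqrt{t}\nabla e^{-t\Delta}$.

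For (i), the two inputs are $(G_{r,s})$ and the $L^2$ Davies--Gaffney estimate $(G^{\mathrm{off}}_{2,2})$ for $\sqrt{t}\nabla e^{-t\Delta}$, the latter holding unconditionally on a complete weighted manifold by \cite[Lemma~3.8]{AMR}. The plan is to invoke the transfer principle of \cite{CS}: on-diagonal $L^r\to L^s$ boundedness with the correct volume normalization $V_{\sqrt{t}}^{1/r-1/s}$, combined with $L^2\to L^2$ Davies--Gaffney decay, automatically upgrades to Gaussian off-diagonal $L^r\to L^s$ bounds. The mechanism is a Davies complex perturbation: conjugate $T_t:=\sqrt{t}\nabla e^{-t\Delta}$ by $e^{\pm\rho\psi}$ for $\rho\in\mathbb{R}$ and a bounded $1$-Lipschitz $\psi$; Davies--Gaffney controls the $L^2\to L^2$ norm of the conjugated operator by $Ce^{C\rho^2}$, $(G_{r,s})$ controls its on-diagonal $L^r\to L^s$ norm, and complex interpolation (or an annular dyadic decomposition) yields an $L^r\to L^s$ bound with the same exponential growth. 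Optimizing in $\rho$ against the separation $d(x,y)$ between the supports produces the desired factor $e^{-d^2(x,y)/Ct}$.

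For (ii) and (iii), fix $t>0$ and a maximal $\sqrt{t}$-separated net $\{x_i\}$, so that the balls $B_i:=B(x_i,\sqrt{t})$ cover $M$ with bounded overlap. Set $T:=\sqrt{t}\nabla e^{-t\Delta}$ and decompose $f=\sum_j f\chi_{B_j}$. For each pair $(i,j)$ apply, in this order: the target H\"older step $\|g\|_{L^p(B_i)}\leq V(x_i,\sqrt{t})^{1/p-1/s}\|g\|_{L^s(B_i)}$, valid for $p\leq s$; the off-diagonal hypothesis $(G^{\mathrm{off}}_{r,s})$, giving $\|\chi_{B_i}T\chi_{B_j}\|_{r\to s}\lesssim V(x_i,\sqrt{t})^{1/s-1/r}e^{-d^2(x_i,x_j)/Ct}$; and the source H\"older step $\|\chi_{B_j}f\|_r\leq V(x_j,\sqrt{t})^{1/r-1/p}\|\chi_{B_j}f\|_p$, valid for $r\leq p$. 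Taking $(r,s)=(2,q')$ yields case (ii) with $p\in[2,q']$, and $(r,s)=(q,2)$ yields case (iii) with $p\in[q,2]$, the ranges being precisely those imposed by $r\leq p\leq s$. The three volume factors collapse to $\bigl(V(x_j,\sqrt{t})/V(x_i,\sqrt{t})\bigr)^{1/r-1/p}$, which by \eqref{d} grows at most polynomially in $1+d(x_i,x_j)/\sqrt{t}$ and is absorbed into the Gaussian, leaving
\[
\|\chi_{B_i}T(\chi_{B_j}f)\|_{L^p(B_i)}\lesssim e^{-d^2(x_i,x_j)/C't}\|\chi_{B_j}f\|_p.
\]
A Schur-test summation in $\ell^p$, using that the discrete Gaussian kernel $\bigl(e^{-d^2(x_i,x_j)/C't}\bigr)_{i,j}$ has uniformly bounded row and column $\ell^1$-sums under \eqref{d}, produces $\|Tf\|_p\lesssim\|f\|_p$, i.e.\ \eqref{Gp}. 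The final sentence of the proposition then follows by chaining (i) into (ii) or (iii).

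I expect (i) to be the principal technical obstacle: the Davies perturbation argument must respect the volume normalization $V_{\sqrt{t}}^{1/r-1/s}$ inside the norm, which calls for a careful interpolation, particularly at endpoints where $r$ or $s$ lies in $\{1,\infty\}$ and the operator is not self-adjoint. The Whitney arguments in (ii)--(iii) are essentially routine once the off-diagonal bounds with the right volume weights are in hand; the only subtlety is applying H\"older in the correct direction at each of the two local steps, as dictated by $r\leq p\leq s$.
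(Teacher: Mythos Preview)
Your treatment of (ii) and (iii) is correct and coincides with the paper's: both defer to the Whitney/Schur summation of \cite[Corollary~4.16]{CS}, exactly as you outline.

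For (i), however, your Davies-perturbation sketch has a genuine gap. You claim that conjugating $T_t=\sqrt{t}\nabla e^{-t\Delta}$ by $e^{\pm\rho\psi}$ and then ``interpolating'' between the Davies--Gaffney $L^2\to L^2$ bound (growth $e^{C\rho^2 t}$) and the on-diagonal $(G_{r,s})$ bound yields an $L^r\to L^s$ bound for the conjugated operator with the same exponential growth in $\rho$. This does not follow: complex interpolation between an $L^2\to L^2$ estimate and an $L^r\to L^s$ estimate only produces bounds on \emph{intermediate} $L^p\to L^q$ spaces, and $(G_{r,s})$ by itself gives no control on the conjugated operator at the endpoint $(r,s)$. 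The missing ingredient is precisely an $L^r\to L^r$ bound for the twisted semigroup $e^{\rho\psi}e^{-t\Delta}e^{-\rho\psi}$ when $r\neq 2$, and you have not supplied one.

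The paper takes a different route, via complex time and Phragm\'en--Lindel\"of (\cite[Propositions~2.3--2.4]{CS}). One considers $F(z)=\langle X,\nabla e^{-z\Delta}f\rangle$ for $f,X$ with well-separated supports. Davies--Gaffney controls $F(t)$ on the positive real axis with the Gaussian factor. For $z=t+is$ in a suitable region one writes $\nabla e^{-z\Delta}=\nabla e^{-\frac{t}{2}\Delta}\cdot e^{-(\frac{t}{2}+is)\Delta}$, applies $(G_{r,s})$ to the first factor, and---this is the ingredient your argument lacks---uses the complex-time bound $\|e^{-(\frac{t}{2}+is)\Delta}\|_{r\to r}\lesssim(|z|/\mathfrak{Re}\,z)^\nu$, which follows from \eqref{UE} via \cite[Corollary~4.5]{CS}. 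Phragm\'en--Lindel\"of then fuses the two estimates into the off-diagonal Gaussian at real $t$. This explains why (i) here covers arbitrary $1\leq r\leq s\leq\infty$, whereas the form analogue (Proposition~\ref{fgrad_off_equiv}(i)) is stated only with one endpoint equal to $2$: for the Hodge Laplacian on forms the complex-time $L^r$ bounds are not available.
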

After presenting these facts, we obtain as a consequence of Theorem \ref{pi_heat}:

\begin{Thm}\label{gradient}

 Let $(M,\mu)$ be  a complete, non-compact, connected weighted Riemannian manifold satisfying  \eqref{d}, \eqref{vol_u} and \eqref{UE}. 
Recall the weighted Ricci tensor $\mathrm{Ric}_\mu:=\mathrm{Ric}-\mathscr{H}_f$, if $\mu=e^f\nu$.
Assume that $(\mathrm{Ric}_{\mu})_-$ satisfies condition~$\eqref{condik}$. Assume moreover that $M$ satisfies \eqref{dd} for some $\kappa>8$, and that for some $q\in [1,2)$, \eqref{Ker} holds for $\mathcal{L}=\vec{\Delta}_\mu$, the Hodge Laplacian on $1$-forms. Then \eqref{grad_r_s} holds for any $q\leq r\leq  s\leq q'$, and

\begin{itemize}

\item if $q>1$, then \eqref{Gp} holds for any $p\in (1,q']$.

\item if $q=1$, then \eqref{G}, the pointwise Gaussian gradient estimate for the heat kernel holds, and \eqref{Gp} holds for any $p\in [1,\infty]$.

\end{itemize}

\end{Thm}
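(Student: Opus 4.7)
The plan is to transfer the $L^r\to L^s$ bounds on the $1$-form semigroup $e^{-t\vec{\Delta}_\mu}(I-\Pi)$ furnished by Theorem~\ref{pi_heat} to the scalar gradient $\sqrt{t}\,d\,e^{-t\Delta}$ via the intertwining relation $d\,e^{-t\Delta}=e^{-t\vec{\Delta}_\mu}\,d$, and then to close the argument using Propositions~\ref{grad_off_equiv} and \ref{forms_imply_grad}. First, I would check that Theorem~\ref{pi_heat} applies to $\mathcal{L}=\vec{\Delta}_\mu$: the Bochner identity \eqref{Boch-k} identifies the potential with $\mathrm{Ric}_\mu$, whose negative part satisfies \eqref{condik} by hypothesis; the lower bound $\kappa>8$ and the assumption \eqref{Ker} for $\vec{\Delta}_\mu$ are in force. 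Theorem~\ref{pi_heat} therefore delivers
\begin{equation}\label{eq:forms-estim}
\sup_{t>0}\Bigl\| V_{\sqrt{t}}^{\tfrac{1}{r}-\tfrac{1}{s}}\, e^{-t\vec{\Delta}_\mu}(I-\Pi)\Bigr\|_{r\to s}<\infty,\qquad q\le r\le s\le q'.
\end{equation}

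The key observation is that $L^2$-harmonic $1$-forms are coclosed, so for $f\in C_0^\infty(M)$ and $\omega\in\mathrm{Ker}_{L^2}(\vec{\Delta}_\mu)$ we have $\langle df,\omega\rangle=\langle f,d^*\omega\rangle=0$; hence $(I-\Pi)\,df=df$. Combining this with the intertwining $d\,e^{-s\Delta}=e^{-s\vec{\Delta}_\mu}d$ and the decomposition $e^{-t\Delta}=e^{-t\Delta/2}\circ e^{-t\Delta/2}$ yields
\[
\sqrt{t}\,d\,e^{-t\Delta}f \;=\; e^{-t\vec{\Delta}_\mu/2}(I-\Pi)\bigl(\sqrt{t}\,d\,e^{-t\Delta/2}f\bigr).
\]
$L^2$-analyticity of the scalar heat semigroup gives $\|\sqrt{t}\,d\,e^{-t\Delta/4}\|_{2\to 2}\lesssim 1$; composing with $e^{-t\Delta/4}:L^r\to L^2$, whose norm is $\lesssim V(t)^{1/2-1/r}$ for $r\in[1,2]$ by the convolution estimate coming from \eqref{UE} and \eqref{vol_u}, one obtains $\|\sqrt{t}\,d\,e^{-t\Delta/2}\|_{r\to 2}\lesssim V(t)^{1/2-1/r}$ for $r\in[q,2]$. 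Feeding this into the displayed identity and applying \eqref{eq:forms-estim} with exponents $(2,s)$ establishes \eqref{grad_r_s} in the ``crossing-$2$'' range $q\le r\le 2\le s\le q'$; in particular the corner estimates $(G_{q,2})$ and $(G_{2,q'})$ hold.

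With these corner estimates in hand, Proposition~\ref{grad_off_equiv}(i) upgrades them to their off-diagonal versions, and parts (ii) and (iii) of the same proposition then yield \eqref{Gp} for every $p\in[q,q']$; together with the known \eqref{Gp} for $p\in(1,2]$ from \cite{Chen}, this gives \eqref{Gp} throughout $(1,q']$, proving the first bullet. The full range $q\le r\le s\le q'$ of \eqref{grad_r_s} is then recovered by decomposing $\sqrt{t}\,d\,e^{-t\Delta}=(\sqrt{t/2}\,d\,e^{-t\Delta/2})\circ e^{-t\Delta/2}$: the second factor is bounded $L^r\to L^s$ with norm $\lesssim V(t)^{1/s-1/r}$ via the Young-type estimate coming from \eqref{UE}, while the first is bounded on $L^s$ by \eqref{Gp} with $p=s\in[q,q']$.

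Finally, when $q=1$ (so $q'=\infty$), the endpoint $r=2$, $s=\infty$ of \eqref{eq:forms-estim} is precisely the hypothesis of Proposition~\ref{forms_imply_grad}, whose conclusion is the pointwise Gaussian gradient estimate \eqref{G}. From \eqref{G} the bound \eqref{Gp} follows for every $p\in[1,\infty]$ by a standard Young-type convolution argument using doubling, which proves the second bullet. The main obstacle is really ensuring that the $L^2\to L^\infty$ case of \eqref{eq:forms-estim} is available to feed into Proposition~\ref{forms_imply_grad} at the endpoint $q=1$; this relies on the on-diagonal upgrade \eqref{on-diago} supplied by the second part of Theorem~\ref{pi_heat}. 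The rest of the argument is essentially bookkeeping, combining the ingredients previously introduced in the paper.
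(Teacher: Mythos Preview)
Your argument is correct and mirrors the paper's: both apply Theorem~\ref{pi_heat} to $\vec{\Delta}_\mu$, use the intertwining $de^{-t\Delta}=e^{-t\vec{\Delta}_\mu}d$ together with $\Pi\,d=0$ (the $k=0$ instance of Lemma~\ref{form1}) to obtain $(G_{r,s})$ in the crossing-$2$ range, and close with Proposition~\ref{grad_off_equiv}; the paper packages the $q>1$ case as the special instance $k=0$ of Theorem~\ref{forms}, and for $q=1$ goes $(G_{1,\infty})\Rightarrow(G^{\mathrm{off}}_{1,\infty})\Rightarrow\eqref{G}$ directly rather than via Proposition~\ref{forms_imply_grad}, but these routes are equivalent. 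One small correction to your final remark: the $L^2\to L^\infty$ case of \eqref{eq:forms-estim} does \emph{not} rely on the on-diagonal upgrade \eqref{on-diago}---it is already contained in the first conclusion \eqref{VPiV} of Theorem~\ref{pi_heat} with $r=2$, $s=q'=\infty$.
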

Our approach actually yields results for the heat kernel on forms as well. In this case, one considers

\begin{equation}\label{fGp}\tag{$\vec{G}_{p}$}
\sup_{t>0}\left\| \sqrt{t}(d+d^*) e^{-t\vec{\Delta}_{k,\mu}}\right\|_{p\to p} <+\infty,
\end{equation}

\begin{equation}\label{fgrad_r_s}\tag{$\vec{G}_{r,s}$}
\sup_{t>0}\left\| V_{\sqrt{t}}^{\frac{1}{r}-\frac{1}{s}}\sqrt{t}(d+d^*) e^{-t\vec{\Delta}_{k,\mu}}\right\|_{r\to s} <+\infty,
\end{equation}
and its off-diagonal counterpart: for every $x,y\in M$ and $t>0$,

\begin{equation}\label{fgrad_off}\tag{$\vec{G}^{\mathrm{off}}_{r,s}$}
V(x,\sqrt{t})^{\frac{1}{r}-\frac{1}{s}}\left\| \chi_{B(x,\sqrt{t})}\sqrt{t}(d+d^*) e^{-t\vec{\Delta}_{k,\mu}}\chi_{B(y,\sqrt{t})}\right\|_{r\to s} \lesssim e^{-\frac{d^2(x,y)}{Ct}}.
\end{equation}
According to the off-diagonal Davies-Gaffney estimates from \cite[Lemma 3.8]{AMR}, $\vec{G}^{\mathrm{off}}_{2,2}$ holds. As before, using the tools developed in \cite{CS}, one can show:

\begin{Pro}\label{fgrad_off_equiv}

Let $(M,\mu)$ be  a complete, non-compact, connected weighted Riemannian manifold satisfying  \eqref{d} and \eqref{UE}.  Then,

\begin{itemize}

\item[(i)] if $2\leq  s\leq \infty$ (resp. $1\leq r\leq 2$), then $(\vec{G}_{2,s})$ (resp. $(\vec{G}_{r,2})$) implies $(\vec{G}^{\mathrm{off}}_{2,s})$ (resp. $(\vec{G}^{\mathrm{off}}_{r,2})$).

\item[(ii)] if $2\leq  q'\leq \infty$, then $(\vec{G}^{\mathrm{off}}_{2,q'})$ implies \eqref{fGp} for any $p \in [2,q']$.

\item[(iii)] if $1\leq q\leq  2$, then $(\vec{G}^{\mathrm{off}}_{q',2})$ implies \eqref{fGp} for any $p \in [q,2]$.

\end{itemize}

\end{Pro}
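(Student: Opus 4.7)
The proof follows the template of Proposition \ref{grad_off_equiv} closely; all three implications rely on techniques from \cite{CS}. Set $T_t := \sqrt{t}(d+d^*) e^{-t\vec{\Delta}_{k,\mu}}$.

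For (i), the plan is to pass from the on-diagonal bound to the Gaussian off-diagonal bound by composing with the heat semigroup. In the case $2 \leq s \leq \infty$, write (up to a harmless rescaling of $\sqrt{t}$)
$$T_t = \Bigl[\sqrt{t}(d+d^*) e^{-(t/2)\vec{\Delta}_{k,\mu}}\Bigr] \circ e^{-(t/2)\vec{\Delta}_{k,\mu}}.$$
The left factor sends $L^2$ to $L^s$ with norm $\lesssim V(\sqrt{t})^{1/s - 1/2}$ by the hypothesis $(\vec{G}_{2,s})$ together with \eqref{vol_u}, while the right factor satisfies the $L^2$ Davies-Gaffney bound $(\vec{G}^{\mathrm{off}}_{2,2})$ from \cite[Lemma 3.8]{AMR}. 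Sandwiching the composition between $\chi_{B(x,\sqrt{t})}$ on the left and $\chi_{B(y,\sqrt{t})}$ on the right, decomposing the intermediate space as a sum over annular shells around $x$, and applying Cauchy--Schwarz on each shell yields the desired Gaussian off-diagonal estimate $(\vec{G}^{\mathrm{off}}_{2,s})$; this is precisely the composition lemma systematically used in \cite{CS}. The case $1 \leq r \leq 2$ is handled symmetrically by composing in the reverse order, $T_t = e^{-(t/2)\vec{\Delta}_{\star,\mu}} \circ \bigl[\sqrt{t}(d+d^*)e^{-(t/2)\vec{\Delta}_{k,\mu}}\bigr]$ with $\star \in \{k+1,k-1\}$ depending on whether $d$ or $d^*$ is applied, using that the heat semigroup on $(k\pm 1)$-forms also enjoys $L^2$ Davies-Gaffney estimates by \cite[Lemma 3.8]{AMR}.

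For parts (ii) and (iii), the uniform $L^p$ boundedness of $T_t$ is derived from the off-diagonal estimate via a standard annular decomposition in the spirit of Blunck-Kunstmann. For (ii), fix $f \in L^p \cap L^2$ with $p \in [2,q']$ and a point $y_0 \in M$; partition $M$ into $C_0 = B(y_0,\sqrt{t})$ and $C_j = B(y_0, 2^{j+1}\sqrt{t}) \setminus B(y_0, 2^j \sqrt{t})$ for $j \geq 1$, setting $f_j := \chi_{C_j} f$. Apply $(\vec{G}^{\mathrm{off}}_{2,q'})$ to each piece to bound $\|\chi_{B(y_0,\sqrt{t})} T_t f_j\|_{q'}$, use H\"older's inequality on $C_j$ to replace $\|f_j\|_2$ by $\|f_j\|_p$ at the cost of a polynomial volume factor controlled by \eqref{d}, and absorb that factor into the Gaussian tail $e^{-c 4^j}$. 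Summing in $j$, integrating the resulting $L^{q'}$-bound over $y_0 \in M$ against the local $L^p \hookrightarrow L^{q'}$ Sobolev-type embedding on balls of radius $\sqrt{t}$, and interpolating with the trivial $L^2\to L^2$ bound (which follows from $(\vec{G}^{\mathrm{off}}_{2,2})$) delivers the full range $p \in [2,q']$. Part (iii) is obtained either by an entirely parallel annular argument applied to $T_t$ directly using $(\vec{G}^{\mathrm{off}}_{q',2})$, or by dualizing (ii) using the self-adjointness of $\vec{\Delta}_{k,\mu}$ together with the commutation relations $d^* e^{-t\vec{\Delta}_{k+1,\mu}} = e^{-t\vec{\Delta}_{k,\mu}} d^*$ and $d e^{-t\vec{\Delta}_{k-1,\mu}} = e^{-t\vec{\Delta}_{k,\mu}} d$.

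The main delicate step is the composition argument in (i): although the template is by now standard, one must verify that the three ingredients --- the on-diagonal $L^2\to L^s$ bound, the $L^2$ Davies-Gaffney estimate, and the doubling condition \eqref{d} --- combine to produce Gaussian decay with the correct volume normalization $V(x,\sqrt{t})^{1/r-1/s}$ on the left-hand side of $(\vec{G}^{\mathrm{off}}_{r,s})$. It is here that the uniformity assumption \eqref{vol_u} plays its role, allowing $V(x,\sqrt{t})$, $V(y,\sqrt{t})$ and the volumes of intermediate shells to be identified with a single scale $V(\sqrt{t})$ throughout the composition.
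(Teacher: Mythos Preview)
Your argument for (i) has a genuine gap. Composing a global $L^2\to L^s$ bound for the left factor with an $L^2\to L^2$ Davies--Gaffney bound for the right factor, and then summing over annular shells, does \emph{not} produce Gaussian decay in $d(x,y)$. When you insert $\sum_j\chi_{C_j}$ at the intermediate level and estimate $\|\chi_{B(x,\sqrt t)}[\text{left}]\chi_{C_j}\|_{2\to s}$, you can only invoke the global $(\vec G_{2,s})$ bound, which carries no information about the distance from $B(x,\sqrt t)$ to $C_j$. The Gaussian factor coming from $\|\chi_{C_j}[\text{right}]\chi_{B(y,\sqrt t)}\|_{2\to 2}$ is therefore uncoupled from $d(x,y)$: for the shells $C_j$ at distance $\sim d(x,y)$ from $x$ but close to $y$ the exponential is $O(1)$, and summing over $j$ gives at best a constant (or a logarithmic loss), not $e^{-d^2(x,y)/Ct}$. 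Passing from ``$L^2\to L^s$ on-diagonal plus $L^2\to L^2$ off-diagonal'' to ``$L^2\to L^s$ off-diagonal'' is precisely the nontrivial step, and it does not follow from a real-variable composition lemma of the kind you sketch.

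The paper handles (i) by a completely different, complex-analytic route: it considers $F(z)=\langle f_2,(d+d^*)e^{-z\vec\Delta}f_1\rangle$, bounded and analytic on the right half-plane. On the real axis the Davies--Gaffney estimate gives $|F(t)|\lesssim r^{-1}e^{-r^2/Ct}$, while the hypothesis $(\vec G_{2,s})$ gives a polynomial bound $|F(z)|\lesssim r^{-1}V(x,r)^{-\alpha}(\Re z/\gamma)^{-(\nu\alpha+1)/2}$ on the region $\Re(\gamma/z)\ge 1$. One then applies the Phragm\'en--Lindel\"of type result \cite[Proposition~2.3]{CS}, which interpolates these two bounds and outputs the Gaussian factor $e^{-\Re(\gamma/z)}$; specialising to real $z=t$ gives $(\vec G^{\mathrm{off}}_{2,s})$. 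This complex interpolation is what your composition argument is missing.

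A second, smaller issue: you repeatedly invoke \eqref{vol_u}, but the proposition assumes only \eqref{d} and \eqref{UE}. The paper's proof needs only doubling to compare $V(x,r)$ with $V(w,\sqrt t)$ for $w\in B(x,r)$; no uniform volume hypothesis is required.

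Your outline for (ii) and (iii) is essentially what the paper does (it refers to \cite[Corollary~4.16]{CS}), so those parts are fine.
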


\begin{Rem}\label{fgrad_off_equiv2}
{\em 
A result similar to Proposition \ref{fgrad_off_equiv} also holds for the exact part $\sqrt{t}de^{-t\vec{\Delta}_{k,\mu}}$ or the co-exact part $\sqrt{t}d^*e^{-t\vec{\Delta}_{k,\mu}}$ alone. That is, for instance, if $q<2$, $L^q\to L^2$ estimates for $\sqrt{t}de^{-t\vec{\Delta}_{k,\mu}}$ imply their off-diagonal counterpart, which in turn imply that $\sqrt{t}de^{-t\vec{\Delta}_{k,\mu}}$ is uniformly bounded on $L^p$, $p\in [q,2]$. The proof is identical to that of Proposition \ref{fgrad_off_equiv} and details are left to the reader.
}
\end{Rem}
Here is a form analog of Theorem \ref{gradient}:

\begin{Thm}\label{forms}

 Let $(M,\mu)$ be  a complete, non-compact, connected weighted Riemannian manifold satisfying  \eqref{d}, \eqref{vol_u}, \eqref{UE}, and \eqref{dd} for some $\kappa>8$. Recall the  tensor $\mathscr{R}_{k,\mu}$ from the Bochner formula on $k$-forms, and let $k\geq1$. Assume that $\mathscr{R}_{k+1,\mu}$, $\mathscr{R}_{k,\mu}$ and $\mathscr{R}_{k-1,\mu}$ all satisfy condition $\eqref{condik}$, and that, for some $q\in [1,2)$, \eqref{Ker} holds for the (weighted) Hodge Laplacian acting on $k-1$, $k$ and $k+1$-forms. Then, for every $q\leq r\leq s\leq q'$, \eqref{fgrad_r_s} for $\vec{\Delta}_{k,\mu}$ holds. In particular, \eqref{fGp} for $\vec{\Delta}_{k,\mu}$ holds for all $p\in [q,q']$.

\end{Thm}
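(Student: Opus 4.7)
The overall strategy is to reduce the estimate for $\sqrt{t}(d+d^*)e^{-t\vec{\Delta}_{k,\mu}}$ to the semigroup bounds furnished by Theorem~\ref{pi_heat}, applied simultaneously to the Hodge Laplacians on $(k{-}1)$-, $k$- and $(k{+}1)$-forms. The key algebraic ingredients are the intertwining relations $d\, e^{-t\vec{\Delta}_{k,\mu}} = e^{-t\vec{\Delta}_{k+1,\mu}}\, d$ and $d^*\, e^{-t\vec{\Delta}_{k,\mu}} = e^{-t\vec{\Delta}_{k-1,\mu}}\, d^*$, together with two orthogonality remarks. First, any $L^2$-harmonic form is both closed and co-closed, so $(d+d^*)\Pi_k=0$ and hence $\sqrt{t}(d+d^*)e^{-t\vec{\Delta}_{k,\mu}}=\sqrt{t}(d+d^*)e^{-t\vec{\Delta}_{k,\mu}}(I-\Pi_k)$. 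Second, for any smooth $\omega$ the form $d\omega$ is exact and $d^*\omega$ is co-exact, hence $L^2$-orthogonal to $\mathrm{Ker}_{L^2}(\vec{\Delta}_{k+1,\mu})$ and $\mathrm{Ker}_{L^2}(\vec{\Delta}_{k-1,\mu})$ respectively; so $(I-\Pi_{k\pm 1})$ may be freely inserted immediately after any occurrence of $d$ or $d^*$.

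The central step is to establish \eqref{fgrad_r_s} in the range $q \le r \le 2 \le s \le q'$ via the three-factor decomposition
\[
 \sqrt{t}(d+d^*) e^{-t\vec{\Delta}_{k,\mu}}(I-\Pi_k)
 = \bigl[\sqrt{t}(d+d^*) e^{-t\vec{\Delta}_{k,\mu}/3}\bigr]
   \cdot e^{-t\vec{\Delta}_{k,\mu}/3}
   \cdot \bigl[e^{-t\vec{\Delta}_{k,\mu}/3}(I-\Pi_k)\bigr].
\]
By Theorem~\ref{pi_heat} applied to $\vec{\Delta}_{k,\mu}$, the rightmost factor maps $L^r \to L^2$ with the prescribed $V_{\sqrt{t}}$ weight for $r\in[q,2]$; the middle factor is an $L^2$-contraction. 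For the leftmost factor I would further split
\[
 \sqrt{t}\, d\, e^{-t\vec{\Delta}_{k,\mu}/3}
 = \bigl[e^{-t\vec{\Delta}_{k+1,\mu}/6}(I-\Pi_{k+1})\bigr] \cdot
   \bigl[\sqrt{t}\, d\, e^{-t\vec{\Delta}_{k,\mu}/6}\bigr],
\]
and analogously for $d^*$ using $\vec{\Delta}_{k-1,\mu}$. The second bracket is uniformly bounded on $L^2$ by analyticity of the heat semigroup, since $\|\sqrt{t}\,d\,e^{-s\vec{\Delta}_{k,\mu}}\omega\|_2^2 \le t\langle \vec{\Delta}_{k,\mu}e^{-s\vec{\Delta}_{k,\mu}}\omega,e^{-s\vec{\Delta}_{k,\mu}}\omega\rangle \lesssim \|\omega\|_2^2$ for $s\simeq t$; the first bracket sends $L^2 \to L^s$ with the correct weight for $s\in[2,q']$ by Theorem~\ref{pi_heat} applied to $\vec{\Delta}_{k+1,\mu}$. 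Composing the three factors yields \eqref{fgrad_r_s} for $q \le r \le 2 \le s \le q'$.

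The remaining ranges $q \le r \le s \le 2$ and $2 \le r \le s \le q'$ are handled by interpolation: first I apply Proposition~\ref{fgrad_off_equiv} to the just-obtained endpoint cases $(r,s)=(q,2)$ and $(r,s)=(2,q')$ to derive \eqref{fGp} for all $p\in[q,q']$, which supplies the diagonal points $(p,p)$; then Riesz--Thorin interpolation among the diagonal points and the off-diagonal bounds at $(q,2)$, $(q,q')$, $(2,q')$ fills in the whole triangle $\{q\le r\le s\le q'\}$. The conclusion \eqref{fGp} announced in the theorem is a by-product of this argument. The main obstacle is the $L^2 \to L^s$ estimate on the leftmost factor of the three-factor decomposition: it is precisely there that one must pass, through commutation, to the heat semigroups on the neighbouring form degrees, and this is the reason why the hypotheses \eqref{Ker} and \eqref{condik} must be imposed not only on $\vec{\Delta}_{k,\mu}$ and $\mathscr{R}_{k,\mu}$, but also on $\vec{\Delta}_{k\pm 1,\mu}$ and $\mathscr{R}_{k\pm 1,\mu}$.
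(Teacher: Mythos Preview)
Your proposal is correct and follows essentially the same route as the paper: apply Theorem~\ref{pi_heat} in degrees $k{-}1$, $k$, $k{+}1$, use the intertwining of $d,d^*$ with the heat semigroups together with the orthogonality of exact (resp.\ co-exact) forms to $L^2$-harmonic forms (the content of Lemma~\ref{form1}) to pass from $L^2$ to $L^s$, use the spectral bound $\|\sqrt{t}\,d\,e^{-t\vec{\Delta}_{k,\mu}}\|_{2\to2}\lesssim 1$, and finish with Proposition~\ref{fgrad_off_equiv} plus interpolation. The only organisational difference is that the paper first proves the one-sided estimate $(\vec{G}_{2,s})$ and then invokes a duality argument (landing on degree $k{-}1$) to obtain $(\vec{G}_{r,2})$, whereas your three-factor decomposition with $e^{-t\vec{\Delta}_{k,\mu}/3}(I-\Pi_k)$ on the right handles the full range $q\le r\le 2\le s\le q'$ in one stroke; this is a mild streamlining of the same argument.
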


\begin{Rem}
{\em
Variants of Theorem \ref{forms} holds for the operators $\sqrt{t}de^{-t\vec{\Delta}_{k,\mu}}$, or $\sqrt{t}d^*e^{-t\vec{\Delta}_{k,\mu}}$ alone. Also, one can obtain a version just for the interval $[q,2]$, or $[2,q']$. The details are left to the interested reader.
}
\end{Rem}
It will be interesting to understand how the estimates \eqref{fGp} of Theorem \ref{forms} are related to the boundedness of the Riesz transform on $k$-forms $(d+d^*)\vec{\Delta}_{k,\mu}^{-1/2}(I-\Pi_k)$. Contrary to the $L^p$ gradient estimates \eqref{Gp}, which can be deduced from the $L^p$ boundedness of the Riesz transform $d\Delta^{-1/2}$ using the analyticity of the heat semigroup $e^{-t\Delta}$, the heat kernel estimates of Theorem \ref{forms} do not follow directly from the $L^q$ and $L^{q'}$ boundedness of the Riesz transform on $k$-forms. Indeed, the analyticity of $e^{-t\vec{\Delta}_{k,\mu}}$ on $L^q$ or $L^{q'}$ is not known in the setting of Theorem \ref{forms}.

\bigskip

Finally, in the last part of the article, we discuss the relevance of our assumption \eqref{Ker}. We study its validity under additional geometric assumptions on $M$. The first such result is:

\begin{Thm}\label{Ker-Sob}

Let $M$ be an $n$-dimensional Riemannian manifold which satisfies \eqref{Sob}, the Sobolev inequality with parameter $n>4$. Assume that the negative part of the Ricci curvature $\mathrm{Ric}_-$ is in $L^{\frac{n}{2}\pm\varepsilon}$ for some $\varepsilon>0$. Then, \eqref{Ker} for $\mathcal{L}=\vec{\Delta}$, the Hodge Laplacian on $1$-forms, holds for all $q\in (\frac{n}{n-1},2)$.

\end{Thm}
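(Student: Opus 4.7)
The plan is to sharpen the range in \cite[Proposition 11]{D2} (which gives only $q\in (n/(n-2),2)$) by exploiting the \emph{refined Kato inequality} $|\nabla|\omega||^2 \le \tfrac{n-1}{n}|\nabla\omega|^2$, valid specifically because harmonic $1$-forms are both closed and co-closed. Given $\omega\in\mathrm{Ker}_{L^2}(\vec\Delta)$, set $u=|\omega|$ and $V=\|\mathrm{Ric}_-\|$. Combining Bochner with refined Kato yields, on $\{u>0\}$, the pointwise inequality
$$u\Delta u + \tfrac{1}{n-1}|\nabla u|^2 \le V u^2.$$
A direct computation then shows that the critical exponent $\gamma_\ast := (n-2)/(n-1)$ makes the $|\nabla u|^2$-correction vanish \emph{exactly}, so that $w := u^{\gamma_\ast}$ is a weak subsolution of the scalar Schr\"odinger inequality $\Delta w \le \gamma_\ast V w$.

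Under \eqref{Sob} with $n>2$, one has the classical bounds $G(x,y)\lesssim d(x,y)^{2-n}$ and $V(x,r)\gtrsim r^n$, and the Riesz potential $I_2 := G\ast$ is bounded $L^r\to L^s$ for $1<r<n/2$ and $1/s=1/r-2/n$ (Hardy--Littlewood--Sobolev). The semigroup domination $|e^{-t(\nabla^{\ast}\nabla + \mathrm{Ric}_+)}\omega| \le e^{-t\Delta}|\omega|$ yields the pointwise inequality $u \le I_2(Vu)$ outside a compact set (the compact contribution being locally bounded by standard elliptic regularity). Combined with the subsolution property from Step~1, a parallel argument produces an analogous bound
$$w \le C_0\, I_2(V w).$$

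Downward HLS iteration. From $u\in L^2$ we get $w\in L^{s_0}$ with $s_0 = 2(n-1)/(n-2)$; and by interpolation between $L^{n/2-\varepsilon}$ and $L^{n/2+\varepsilon}$, we have $\mathrm{Ric}_- \in L^{n/2-\varepsilon'}$ for every $\varepsilon'\in(0,\varepsilon]$. Whenever $w\in L^{s_k}$ with $s_k>(n-2\varepsilon')/(n-2\varepsilon'-2)$, H\"older gives $Vw\in L^{r_k}$ with $1/r_k=1/s_k + 2/(n-2\varepsilon')$, hence $w\in L^{s_{k+1}}$ with $1/s_{k+1}=1/s_k + 4\varepsilon'/[n(n-2\varepsilon')]$. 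Since $s_0 > (n-2\varepsilon')/(n-2\varepsilon'-2)$ for all small $\varepsilon'$ (using $n>4$), finitely many iterations show that $w\in L^s$ for every $s$ strictly greater than $(n-2\varepsilon')/(n-2\varepsilon'-2)$. Translating back through $u=w^{1/\gamma_\ast}$, we conclude that $u\in L^q$ for every
$$q > \frac{(n-2)(n-2\varepsilon')}{(n-1)(n-2\varepsilon'-2)}.$$
This threshold tends to $n/(n-1)$ as $\varepsilon'\downarrow 0$, yielding $u\in L^q$ for all $q>n/(n-1)$; interpolation with $L^2$ then covers the full range $q\in(n/(n-1),2)$.

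The main obstacle is Step~2: transferring the integral inequality from $u$ to $w = u^{\gamma_\ast}$ when $\gamma_\ast<1$ and $u$ may vanish, since semigroup domination applies to the 1-form $\omega$ but not directly to the nonlinear transform $u^{\gamma_\ast}$. The plan is to handle this via truncation: replace $u$ by $u_\delta := u+\delta$, test the weak subsolution inequality of Step~1 against $\phi^2 u_\delta^{2\gamma_\ast-1}$ with smooth cutoffs $\phi$ to derive a Caccioppoli--Sobolev estimate
$$\bigl\|\phi u_\delta^{\gamma_\ast}\bigr\|_{2n/(n-2)}^{2} \lesssim \int \phi^2 V u_\delta^{2\gamma_\ast} + \int u_\delta^{2\gamma_\ast}|\nabla\phi|^2,$$
and then let $\delta\downarrow 0$; this Sobolev-driven form of the iteration is equivalent to the HLS one described above and bypasses the need for a direct semigroup-domination argument at the level of $w$.
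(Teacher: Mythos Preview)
Your start is exactly right and matches the paper: use Bochner plus the refined Kato inequality to show that $w=|\omega|^{\gamma_\ast}$ with $\gamma_\ast=(n-2)/(n-1)$ satisfies $(\Delta-\gamma_\ast V)w\le 0$ weakly. The paper makes the same computation (written there as $\varphi=|\omega|^{2-\kappa}$ with $\kappa=n/(n-1)$).

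The gap is in your Step~2, and your proposed fix via Caccioppoli--Sobolev does not close it. The Caccioppoli estimate you write,
\[
\|\phi w\|_{2n/(n-2)}^{2}\lesssim \int \phi^2 V w^{2}+\int w^{2}|\nabla\phi|^{2},
\]
controls a \emph{higher} Lebesgue norm of $w$ by lower ones; it is the standard upward Moser step, not a downward one, so it cannot replace the inequality $w\le C_0 I_2(Vw)$ that your HLS scheme needs. And that inequality does not follow from semigroup domination: domination acts on the $1$-form $\omega$ (hence on $u$), not on the nonlinear transform $u^{\gamma_\ast}$, as you yourself note. Even the version $u\le I_2(Vu)$ is not immediate --- it requires a maximum-principle argument together with a priori decay of $u$ at infinity, and you never establish that $u$ (or $w$) tends to zero at infinity. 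Without that, $w-I_2(\gamma_\ast Vw)$ could be a nontrivial bounded superharmonic function and the inequality fails. Finally, running the HLS iteration on $u$ alone (where the domination argument might be salvaged) only gives $q>n/(n-2)$, i.e.\ no improvement over \cite[Proposition~11]{D2}; the gain to $n/(n-1)$ genuinely requires working with $w$.

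The paper closes this step differently and avoids any iteration. First, using that $\mathrm{Ric}_-\in K^\infty(M)$, it writes $\omega=-\mathcal{H}^{-1}\mathcal{W}_0\omega$ with $\mathcal{W}_0$ compactly supported, obtaining the pointwise decay $|\omega(x)|\lesssim d(x,x_0)^{2-n}$ directly from the Green kernel bound for $\mathcal{H}$. This gives $\varphi=w\to 0$ at infinity. Then, since $\varphi$ is a subsolution of $P=\Delta-V_\infty$ outside a compact set (where $V_\infty$ is the tail of $\gamma_\ast V$), a barrier/maximum-principle comparison against $G_P(\cdot,x_0)$ on an exhaustion yields $\varphi(x)\lesssim G_P(x,x_0)\lesssim d(x,x_0)^{2-n}$. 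Raising to the power $1/\gamma_\ast$ gives $|\omega|\lesssim d^{\,1-n}$, hence $\omega\in L^q$ for every $q>n/(n-1)$. If you want to repair your argument, the missing ingredient is precisely this: a preliminary pointwise decay for $u$ (so that $w\to 0$), followed by a maximum-principle comparison to obtain $w\le CG_P(\cdot,x_0)$, rather than an integral inequality and HLS.
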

This improves the result obtained in \cite[Proposition 11]{D2} in the case $\mathcal{L}=\vec{\Delta}$, the Hodge Laplacian on $1$-forms. Actually, a similar result holds for the Hodge Laplacian on $k$-forms:

\begin{Thm}\label{Ker-Sob_k}

Let $M$ be an $n$-dimensional Riemannian manifold which satisfies \eqref{Sob}, the Sobolev inequality with parameter $n>4$. Let $k\in[1, n-1]$ be an integer, and recall the tensor $\mathscr{R}_{k}$ from the Bochner formula of the Hodge Laplacian $\vec{\Delta}_{k}$ on $k$-forms. Assume that $\mathscr{R}_{k}$ is in $ L^{\frac{n}{2}\pm\varepsilon}$ for some $\varepsilon>0$. Assume also that

$$V(x,r)\simeq r^n,\,\quad \forall x\in M,\,r\geq1.$$
Then, \eqref{Ker} for $\mathcal{L}=\vec{\Delta}_{k}$ holds for all $q\in (q^*,2)$, where

$$q^*=\left\{\begin{array}{lcl}\frac{n(n-k-1)}{(n-2)(n-k)},\quad 1\leq k\leq \frac{n}{2}\\\\
\frac{n(k-1)}{k(n-2)},\quad \frac{n}{2}\leq k\leq n-1
\end{array}\right.$$

\end{Thm}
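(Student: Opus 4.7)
By Hodge duality---the Hodge star $\star$ exchanges $\mathrm{Ker}_{L^2}(\vec\Delta_k)$ and $\mathrm{Ker}_{L^2}(\vec\Delta_{n-k})$, and the two formulas for $q^*$ are symmetric under $k\leftrightarrow n-k$---I reduce to the case $1\leq k\leq n/2$ and fix $\omega\in\mathrm{Ker}_{L^2}(\vec\Delta_k)$. The key input is the refined Kato inequality for harmonic $k$-forms of Calderbank--Gauduchon--Herzlich: $|\nabla|\omega||^2\leq c_k|\nabla\omega|^2$ on $\{\omega\neq 0\}$, with sharp constant $c_k=(n-k)/(n-k+1)$. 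Combined with the Bochner identity $\nabla^*\nabla\omega=-\mathscr{R}_k\omega$ (which follows from $\vec\Delta_k\omega=0$), this yields the pointwise inequality
$|\omega|\Delta|\omega|+\gamma_k|\nabla|\omega||^2\leq \|\mathscr{R}_k\|\,|\omega|^2, \qquad \gamma_k:=\tfrac{1-c_k}{c_k}=\tfrac{1}{n-k}.$
Using $\Delta(u^\alpha)=\alpha u^{\alpha-1}\Delta u+\alpha(\alpha-1)u^{\alpha-2}|\nabla u|^2$ with $u=|\omega|$ (regularized as $(|\omega|^2+\delta)^{\alpha/2}$), one checks that for every $\alpha\in(0,1+\gamma_k]$ the scalar function $w:=|\omega|^\alpha$ is a nonnegative distributional subsolution of
$\Delta w\leq \alpha\|\mathscr{R}_k\|\,w.$

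The hypotheses (Sobolev with parameter $n>4$, Euclidean volume $V(x,r)\simeq r^n$ for $r\geq 1$, and $\mathscr{R}_k\in L^{n/2\pm\varepsilon}$) have two consequences I would exploit. First, the positive minimal Green function satisfies $G(x,y)\simeq d(x,y)^{-(n-2)}$ for $d(x,y)\geq 1$, so that the Riesz potential $I_2 f(x):=\int_M G(x,y)f(y)\,d\mu(y)$ is bounded from $L^r$ to $L^{r^*}$ with $1/r^*=1/r-2/n$ for $1<r<n/2$ (Hardy--Littlewood--Sobolev). Second, $V:=\|\mathscr{R}_k\|$ lies in the Kato class at infinity $K^\infty(M)$, as recalled after \eqref{Kato} via \cite{D3}; in particular $\|V\mathbf{1}_{M\setminus K}\|_{n/2}$ is arbitrarily small for large enough compact $K$.

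The decisive choice is now $\alpha:=(n-k-1)/(n-k)$. An elementary computation shows $0<\alpha<1+\gamma_k$, and for $n>4$ with $1\leq k\leq n/2$ also $\alpha<2(n-2)/n$, so $\omega\in L^2$ gives $w\in L^{s_0}$ with starting exponent $s_0:=2/\alpha>n/(n-2)$. A standard Moser bound applied to $\vec\Delta_k\omega=0$ yields the a priori decay $|\omega(x)|\lesssim|x|^{-n/2}$, hence $w\to 0$ at infinity. Outside a large compact, potential-theoretic comparison then gives a Green-function integral bound
$w(x)\leq C\alpha\int_M G(x,y)V(y)w(y)\,d\mu(y)+h(x),$
with $h(x)\lesssim G(x,x_0)$ absorbing boundary contributions. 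Iterating this inequality---at each step, Hölder using $V\in L^{n/2-\varepsilon}$ gives $Vw\in L^{r_j}$ with $1/r_j=1/s_j+2/(n-2\varepsilon)$, then HLS on $I_2$ gives $w\in L^{s_{j+1}}$ with $1/s_{j+1}=1/r_j-2/n=1/s_j+4\varepsilon/(n(n-2\varepsilon))$---yields, after finitely many steps, $w\in L^s$ for every $s>n/(n-2)$. Returning to $\omega$ via $w=|\omega|^\alpha$ gives $\omega\in L^{s\alpha}$ for every $s\alpha>\alpha n/(n-2)=n(n-k-1)/((n-2)(n-k))=q^*$, which is the desired \eqref{Ker}.

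The main obstacle is rigorously implementing this machinery: justifying the refined Kato computation and the subsolution property of $w$ on the zero set $\{\omega=0\}$ via regularization and passing to the limit $\delta\to 0$; controlling the boundary contribution $h$ from the compact where $V$ is not small, using the Kato-class smallness to close the argument; and running the HLS iteration uniformly to reach every $s>n/(n-2)$. A more self-contained alternative, following \cite[Proposition 11]{D2}, is to bypass the Green-function representation and instead perform a direct Moser iteration on $\Delta w\leq\alpha Vw$ by testing against $w^{p-1}\eta^2$ and combining the resulting Caccioppoli estimate with the Sobolev inequality and the smallness of $\|V\mathbf{1}_{M\setminus K}\|_{n/2}$ for $K$ large.
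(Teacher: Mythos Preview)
Your proposal shares the paper's central mechanism: the refined Kato inequality for harmonic $k$-forms turns $|\omega|^{\alpha}$ into a subsolution of a scalar Schr\"odinger operator $\Delta-\alpha\|\mathscr{R}_k\|$, with the exponent $\alpha=(n-k-1)/(n-k)=2-\kappa$ (in the paper's notation $\kappa=(n-k+1)/(n-k)$). However, your stated admissible range ``$\alpha\in(0,1+\gamma_k]$'' is incorrect: the computation gives $\Delta w\leq \alpha Vw$ precisely when $\alpha\geq 2-\kappa=1-\gamma_k$, not when $\alpha\leq 1+\gamma_k$. Your actual choice $\alpha=1-\gamma_k$ sits exactly at the correct threshold, so the argument survives, but the range should be fixed.

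Where you genuinely diverge from the paper is in how the subsolution inequality is exploited. You propose a Green-representation $w\leq C\,I_2(Vw)+h$ followed by an HLS bootstrap, pushing $w$ from $L^{2/\alpha}$ down to $L^s$ for every $s>n/(n-2)$. The paper instead first obtains the sharper preliminary decay $|\omega(x)|\lesssim r(x)^{2-n}$ directly from Lemma~\ref{minig} (writing $\omega=-\mathcal{H}^{-1}\mathcal{W}_0\omega$ with $\mathcal{W}_0$ compactly supported and $\mathcal{H}^{-1}$ having a kernel bounded by $d(x,y)^{2-n}$), and then performs a \emph{single} maximum-principle comparison: since $\varphi=|\omega|^{2-\kappa}\to 0$ at infinity and $(\Delta-V_\infty)\varphi\leq 0$ outside a compact set, and since $\Delta-V_\infty$ admits a bounded positive solution $h\simeq 1$, one gets $\varphi\leq C\,G_{\Delta-V_\infty}(\cdot,x_0)\lesssim r^{2-n}$ by comparison on exhausting annuli. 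This yields the pointwise bound $|\omega|\lesssim r^{(2-n)/(2-\kappa)}$ in one stroke, and integration against Euclidean volume gives $L^q$ for all $q>q^*$.

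The paper's route is shorter and avoids the two soft spots in yours: (i) your integral inequality $w\leq C\,I_2(Vw)+h$ is asserted via ``potential-theoretic comparison'' but in effect requires the very same exhaustion-plus-maximum-principle argument the paper carries out explicitly (and for which the decay $w\to 0$ is needed); (ii) your HLS iteration, as written, terminates slightly above $n/(n-2)$ for each fixed $\varepsilon$, so to reach every $s>n/(n-2)$ you must appeal to interpolation $\mathscr{R}_k\in L^p$ for all $p\in[n/2-\varepsilon,n/2+\varepsilon]$ and let $\varepsilon'\downarrow 0$ in the iteration, a detail worth stating. Your alternative ``direct Moser iteration on $\Delta w\leq \alpha Vw$'' is viable, but again the paper's comparison with $G_{\Delta-V_\infty}$ is the cleaner substitute.
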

This result, together with THeorem \ref{pi_heat}, has consequence for the heat semi-group on $k$-forms. For $k=1,\ldots,n-1$, define the interval $I_k$ by

$$I_k=\left\{\begin{array}{lcl}\left(\frac{n(n-k-1)}{(n-2)(n-k)},\frac{n(n-k-1)}{n-2k}\right),\quad 1\leq k\leq \frac{n}{2}\\\\
\left(\frac{n(k-1)}{k(n-2)},\frac{n(k-1)}{2k-n}\right),\quad \frac{n}{2}\leq k\leq n-1
\end{array}\right.$$
and let also $I_0=I_n=[1,+\infty]$. Then, one has:

\begin{Cor}\label{unif_bdd2}

Let $M$ be an $n$-dimensional Riemannian manifold which satisfies \eqref{Sob}, the Sobolev inequality with parameter $n>8$. Recall the tensor $\mathscr{R}_{k}$ from the Bochner formula of the Hodge Laplacian $\vec{\Delta}_{k}$ on $k$-forms. Assume that for every integer $k\in [1,n-1]$, $\mathscr{R}_{k}$ is in $ L^{\frac{n}{2}\pm\varepsilon}$ for some $\varepsilon>0$. Assume also that

$$V(x,r)\simeq r^n,\,\quad \forall x\in M,\,r\geq1.$$
Then, for every $k=1,\cdots,n-1$, the heat semi-group $e^{-t\vec{\Delta}_k}$ of the Hodge Laplacian on $k$-forms is uniformly bounded on $L^p$, for every $p\in I_k$, and moreover, for all $r,\,s\in I_{k-1}\cap I_k\cap I_{k+1}$ such that $ r\leq s$,
 
$$\sup_{t>0}V_{\sqrt{t}}^{\frac{1}{r}-\frac{1}{s}}||\sqrt{t}(d+d^*)e^{-t\vec{\Delta}_{k,\mu}}||_{r\to s}<+\infty.$$

\end{Cor}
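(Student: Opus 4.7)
The corollary is essentially a bookkeeping assembly of Theorem~\ref{Ker-Sob_k}, Corollary~\ref{unif_bdd}, and Theorem~\ref{forms}. First I would verify the background hypotheses. The polynomial volume growth $V(x,r)\simeq r^n$ for $r\geq 1$ gives \eqref{d}, \eqref{vol_u}, and \eqref{dd} with $\kappa=n>8$; from the Sobolev inequality one has the on-diagonal heat kernel bound $p_t(x,x)\lesssim t^{-n/2}$, and combined with the volume growth this yields \eqref{due}, which self-improves in the doubling setting (Grigor'yan) to \eqref{UE}. The hypothesis $\mathscr{R}_k\in L^{n/2\pm\varepsilon}$ combined with the weighted $L^p$ criterion recalled in Section~1.2 (applied with $\nu=\nu'=n$) places each $\mathscr{R}_k$ in the Kato class at infinity $K^\infty(M)$, so in particular $(\mathscr{R}_k)_-$ satisfies \eqref{condik}; non-parabolicity of $M$ is automatic since $n>2$.

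Next, Theorem~\ref{Ker-Sob_k} applied to each $\vec{\Delta}_k$ for $1\leq k\leq n-1$ gives \eqref{Ker} for every $q\in(q^*_k,2)$. A direct calculation of conjugate exponents verifies that $(q^*_k)'$ coincides with the upper endpoint of $I_k$, so the family of intervals $[q,q']$ as $q$ varies in $(q^*_k,2)$ exhausts the compact subintervals of $I_k$. At the endpoint indices $k=0$ and $k=n$, the condition $V(x,r)\simeq r^n$ with $n>2$ makes $M$ of infinite volume, forcing $\mathrm{Ker}_{L^2}(\vec{\Delta}_0)=\{0\}$; the Hodge-star intertwining $\vec{\Delta}_n=\star^{-1}\vec{\Delta}_0\star$ gives the analogous vanishing at $k=n$. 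Consequently \eqref{Ker} holds trivially at these indices for every $q\in[1,2)$, consistent with the convention $I_0=I_n=[1,+\infty]$, and the Bochner tensors $\mathscr{R}_0,\mathscr{R}_n$ vanish, so they trivially satisfy \eqref{condik}.

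For the uniform $L^p$ boundedness, fix $k\in\{1,\ldots,n-1\}$ and $p\in I_k$; choose $q\in(q^*_k,2)$ with $p\in[q,q']$ and apply Corollary~\ref{unif_bdd} with $\mathcal{L}=\vec{\Delta}_k$, which directly gives uniform boundedness on $L^p$. For the $L^r\to L^s$ gradient bound, fix $r\leq s$ in $I_{k-1}\cap I_k\cap I_{k+1}$ and pick a single exponent $q$ strictly less than $\min(q^*_{k-1},q^*_k,q^*_{k+1})$ and close enough to it that $[q,q']\supset[r,s]$; Theorem~\ref{Ker-Sob_k} then supplies \eqref{Ker} at this common $q$ for the three adjacent Hodge Laplacians, and all the \eqref{condik} hypotheses have already been checked. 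Theorem~\ref{forms} applied to $\vec{\Delta}_k$ then yields $(\vec{G}_{r,s})$, which is the claimed estimate. I do not anticipate any substantive obstacle; the only mild care needed is at the endpoint indices $k=1$ and $k=n-1$, where the adjacent Laplacian is the scalar one or its Hodge dual, and which are dispatched by the vanishing of the corresponding $L^2$ kernel and Bochner tensor in the infinite-volume setting.
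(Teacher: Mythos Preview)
Your approach is the intended one: the paper states Corollary~\ref{unif_bdd2} as a direct consequence of Theorem~\ref{Ker-Sob_k}, Corollary~\ref{unif_bdd} and Theorem~\ref{forms}, and your assembly of these pieces is correct in structure. The identification $(q^*_k)'=$ upper endpoint of $I_k$ is correct, your handling of the endpoint degrees $k-1=0$ and $k+1=n$ via infinite volume and the Hodge star is fine, and the verification of \eqref{condik} through the Kato-class criterion is right.

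There is, however, a slip in the choice of $q$ for the $(d+d^*)$-estimate. You write ``pick a single exponent $q$ strictly less than $\min(q^*_{k-1},q^*_k,q^*_{k+1})$''. This is the wrong direction: Theorem~\ref{Ker-Sob_k} gives \eqref{Ker} only for $q>q^*_j$, so with $q<\min_j q^*_j$ you cannot invoke it at all. What you need is $q$ with $\max(q^*_{k-1},q^*_k,q^*_{k+1})<q\leq \min(r,s')$; since $r,s\in I_{k-1}\cap I_k\cap I_{k+1}$ one has $r>\max_j q^*_j$ and $s'>\max_j q^*_j$, so such $q$ exists, and then $[q,q']\supset[r,s]$ as required. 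The rest of your argument then goes through.

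A minor point on the background hypotheses: the assumption $V(x,r)\simeq r^n$ is only for $r\geq 1$, so it does not by itself give \eqref{d} and \eqref{vol_u} at small scales. In the present setting $\mathscr{R}_1=\mathrm{Ric}\in L^{n/2\pm\varepsilon}$, so by \cite{C8} (see the remark following Corollary~\ref{grad-Sob}) one in fact has $V(x,r)\simeq r^n$ for all $r>0$, which closes this gap.
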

Corollary \ref{unif_bdd2} improves (under stronger geometric conditions) results obtained in \cite{MO}. As a consequence of Theorems \ref{Ker-Sob} and \ref{gradient}, one obtains:

\begin{Cor}\label{grad-Sob}

Let $M$ be an $n$-dimensional Riemannian manifold which satisfies \eqref{Sob}, the Sobolev inequality of exponent $n>8$. Assume that the negative part of the Ricci curvature $\mathrm{Ric}_-$ is in $ L^{\frac{n}{2}\pm\varepsilon}$ for some $\varepsilon>0$. Then, the $L^p$ gradient estimate for the heat kernel \eqref{Gp} holds for all $2\leq p< n$.

\end{Cor}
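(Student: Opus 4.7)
The plan is to chain Theorem \ref{Ker-Sob} into Theorem \ref{gradient} and then optimize over the kernel exponent $q$.

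Fix $q \in (\tfrac{n}{n-1}, 2)$. Since $n > 8 > 4$, Theorem \ref{Ker-Sob} applies and produces \eqref{Ker} for $\mathcal{L} = \vec{\Delta}$, the Hodge Laplacian on $1$-forms, for this value of $q$. I would then verify the remaining hypotheses of Theorem \ref{gradient}. First, \eqref{Sob} forces the Euclidean-type lower bound $V(x, r) \gtrsim r^n$ for all $r > 0$, so \eqref{dd} holds with $\kappa = n > 8$. Second, the integrability $\mathrm{Ric}_- \in L^{n/2 + \varepsilon}$ combined with \eqref{Sob} yields, via the classical heat kernel theory under integral Ricci bounds, the matching upper bound $V(x,r) \lesssim r^n$ (at large scales) together with \eqref{d}, \eqref{vol_u} and the Gaussian upper bound \eqref{UE}. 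Third, under the resulting bound $V(x,r) \simeq r^n$ one has $\nu = \nu' = n$ in the reverse-doubling exponents, so the sufficient condition for the Kato class at infinity recalled after \eqref{Kato} --- namely $\mathcal{V} \in L^{\nu/2-\varepsilon}(M, d\mu/V(\cdot,1)) \cap L^{\nu'/2+\varepsilon}(M, d\mu/V(\cdot,1))$ --- reduces to $\mathrm{Ric}_- \in L^{n/2 \pm \varepsilon}$, which is our hypothesis; hence $\mathrm{Ric}_-$ satisfies \eqref{condik}.

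Theorem \ref{gradient} now yields \eqref{Gp} for every $p \in (1, q']$. As $q \searrow \tfrac{n}{n-1}$, the Hölder conjugate $q' \nearrow n$, so \eqref{Gp} holds for every $p \in (1, n)$, and in particular for every $p \in [2, n)$. This proves the corollary.

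The main obstacle is not the overall architecture, which is just a composition of two theorems and a monotone limit in the exponent, but rather the verification that the ``background'' hypotheses \eqref{d}, \eqref{vol_u} and \eqref{UE} --- which Theorem \ref{gradient} takes as inputs --- can genuinely be extracted from the purely Sobolev-plus-integral-Ricci setting assumed here. This requires invoking classical but non-trivial results about heat kernel regularity under $L^p$ integral bounds on $\mathrm{Ric}_-$; such tools have been discussed in the introductory Section 1.4. With those in place, the chain of implications is mechanical.
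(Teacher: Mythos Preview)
Your proof is correct and follows exactly the route the paper intends: feed Theorem~\ref{Ker-Sob} into Theorem~\ref{gradient} and let $q\searrow \tfrac{n}{n-1}$ so that $q'\nearrow n$. The only refinement is that the background estimates \eqref{d}, \eqref{vol_u}, \eqref{UE} (via $V(x,r)\simeq r^n$) are obtained in the paper by a direct appeal to \cite{C8}, which is precisely the reference underlying the Section~1.4 discussion you invoke.
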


\begin{Rem}
{\em
In the setting of Theorem \ref{Ker-Sob} and Corollary \ref{grad-Sob}, it follows from \cite{C8} that one has the volume estimates:

$$V(x,r)\simeq r^n,\quad\forall x\in M,\,r>0.$$
}
\end{Rem}

\begin{Rem}
{\em 
If one assumes that the Ricci curvature is bounded from below, the result of Corollary \ref{grad-Sob} is actually already known, even under the weaker assumption $n>3$: indeed, this follows from the fact that under the assumptions of Corollary \ref{grad-Sob}, the Riesz transform $d\Delta^{-1/2}$ is bounded on $L^p$ for all $p\in (1,n)$ (see \cite[Theorem 16]{D2}; the required volume estimate follows from the recent \cite{C8}).
}
\end{Rem}

\begin{Rem}\label{grad_con_sum}
{\em 

As the example of the connected sum of Euclidean spaces demonstrates, the result of Corollary \ref{grad-Sob} is essentially optimal. Indeed, on $\R^n\sharp\R^n$, \eqref{Gp} cannot hold for $p>n$. The reason is that if \eqref{Gp} were true for some $p>n$, then by using Morrey inequalities, one could prove Gaussian heat kernel lower bounds, which are known to be false on $\R^n\sharp\R^n$ (for details, see the arguments in \cite[Section 5]{CD1} as well as \cite{D4}).

}
\end{Rem}
Under stronger assumptions on the geometry at infinity of the manifolds, the results of Corollary \ref{grad-Sob} can be improved. In conclusion of the article, we study the validity of \eqref{Ker} for $\mathcal{L}=\vec{\Delta}$, the Hodge Laplacian on $1$-forms, on manifolds that have a special cone structure at infinity. On such manifolds, there is a hope to find the optimal interval of $q$'s in $[1,2)$ such that \eqref{Ker} for $\mathcal{L}=\vec{\Delta}_k$ holds; indeed, harmonic $k$-forms on an conical manifold have an relatively simple description (see \cite{Che}). One expects that the answer in this case depends on the spectrum of the Hodge Laplacian of the basis of the cone. In \cite{GS}, the analysis of the behaviour at infinity of $L^2$ harmonic $k$-forms has been partially made, and as a consequence of the results of \cite{GS} (see especially Corollary 9 therein), in the case of $1$-forms it is possible to obtain the following result: assume that $M$ is asymptotically Euclidean (to high enough order), then one of the three following possibilities occurs: 

\begin{enumerate}

\item[(i)] \eqref{Ker} for $\mathcal{L}=\vec{\Delta}$ holds for all $q\in (1,2)$.

\item[(ii)] \eqref{Ker} for $\mathcal{L}=\vec{\Delta}$ holds for all $q\in (\frac{n}{n-1},2)$.

\item[(iii)] \eqref{Ker} for $\mathcal{L}=\vec{\Delta}$ holds for all $q\in (\frac{n}{n-2},2)$.

\end{enumerate}
However, there is a catch here: to determine whether one is in situation (i), (ii) or (iii), one needs to prove the existence or non-existence of $L^2$ harmonic $1$-forms of certain special type, which is not done in \cite{GS}. The result of Theorem \ref{Ker-Sob} shows that for $1$-forms, one is always in the case (i) or (ii). Our next result (Theorem \ref{main-cone}) implies in particular that on a manifold that is Euclidean outside a compact set, (i) occurs if and only if there is only one end. The case of forms of higher degree is more involved, and we leave it for future work. After this short discussion, let us introduce the setting for our last result.

\bigskip

If $(X,\bar{g})$ is a compact Riemannian manifold, one can consider $\mathcal{C}(X)$, the cone over $X$, defined by

$$\mathcal{C}(X)=(0,\infty)\times X,$$
endowed with the metric 

$$g=dr^2+r^2\bar{g}.$$
Let $k\geq1$ be an integer, and let $X_1,\ldots,X_k$ be compact Riemannian manifold having all the same dimension. Let $M$ be a complete Riemannian manifold. If there exists $U\subset M$ and $V_1\subset \mathcal{C}(X_1),\ldots,V_k\subset \mathcal{C}(X_k)$ open sets such that $M\setminus U$ is isometric to the disjoint union of the manifolds $\mathcal{C}(X_k)\setminus V_k$, we will say that $M$ is a {\em conical manifold at infinity}, with $k$ ends. In this case, we will write

$$M\simeq_\infty \sqcup_{i=1}^k\mathcal{C}(X_i).$$
It is important to note that for $n>2$, if $M$ is an $n$-dimensional manifold which is conical at infinity, then the Sobolev inequality with parameter $n$ holds. Furthermore, if $M$ has only one end, then the scaled $L^2$ Poincar\'e inequalities \eqref{P}, hold on $M$. See e.g. \cite[Section 7.1]{C1} and the references therein. Also, a direct computation shows that if $n$ is the dimension of $M\simeq_\infty\mathcal{C}(X)$, and if the Ricci curvature on $X$ has the lower bound

\begin{equation}\label{low_Ric}
\mathrm{Ric}_X\geq (n-2)\bar{g},
\end{equation}
then $M$ has {\em non-negative} Ricci curvature. Our study of the behaviour of harmonic $1$-forms on such manifolds will lead to the following result:

\begin{Thm}\label{main-cone}
Let $n>8$, and let $M\simeq_\infty \mathcal{C}(X)$ be an $n$-dimensional manifold that is conical at infinity with only one end. Assume that the Ricci curvature on $X$ has the lower bound \eqref{low_Ric}. Then,

\begin{itemize}

\item[(i)] if $X=S^{n-1}$, i.e. $M$ is {\em Euclidean} outside a compact set, then for every $p\in (1,\infty)$, the semi-group $e^{-t\vec{\Delta}}$ is bounded on $L^p$ and the gradient estimate \eqref{Gp} holds.

\item[(ii)] if $X\neq S^{n-1}$, then for every $p\in [1,\infty]$, the semi-group $e^{-t\vec{\Delta}}$ is bounded on $L^p$, and the Gaussian gradient estimate for the heat kernel \eqref{G} holds.

\end{itemize}

\end{Thm}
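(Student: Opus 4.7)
The plan is to verify the hypotheses of Theorems~\ref{pi_heat} and~\ref{gradient} applied to $\mathcal L=\vec\Delta$ and to read off both conclusions from them; the dichotomy between~(i) and~(ii) will come from a sharper analysis of the $r$-decay of $L^2$ harmonic $1$-forms on the conical end. Because $M$ is conical at infinity with a single end and cross-section $X$, one has $V(x,r)\simeq r^n$ for all $x\in M$ and $r\ge 1$, which yields \eqref{d}, \eqref{vol_u}, and \eqref{dd} with $\kappa=n>8$. The one-end conical structure also supplies the Sobolev inequality \eqref{Sob} with parameter $n$ and the scale-invariant Poincar\'e inequalities \eqref{P}, hence \eqref{LY} and in particular \eqref{UE} by Saloff-Coste. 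The hypothesis $\mathrm{Ric}_X\ge (n-2)\bar g$ makes the cone $\mathcal{C}(X)$ have non-negative Ricci curvature, so $\mathrm{Ric}_-$ is compactly supported on $M$, satisfies \eqref{condik}, and lies in $L^{n/2\pm\varepsilon}$. Thus every hypothesis of Theorems~\ref{pi_heat} and~\ref{gradient} except \eqref{Ker} is in force, and Theorem~\ref{Ker-Sob} already gives \eqref{Ker} for every $q\in(n/(n-1),2)$.

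Next I would refine \eqref{Ker} by separating variables on the conical end $(r,\theta)\in(R,\infty)\times X$. Any $L^2$ harmonic $1$-form $\omega$ on $M$ decomposes along the joint spectral decomposition of the Hodge Laplacians on $X$ in degrees~$0$ and~$1$; its radial coefficients satisfy ODEs with two characteristic exponents depending monotonically on the corresponding eigenvalue of $X$, and the $L^2$ condition forces the faster-decaying exponent to be selected. This is the standard machinery of Cheeger~\cite{Che} used in~\cite{GS}. The Lichnerowicz theorem applied to $X$ (of dimension $n-1$ with $\mathrm{Ric}_X\ge(n-2)\bar g$) gives $\lambda_1(\Delta_X)\ge n-1$, with equality exactly when $X=S^{n-1}$ by Obata's rigidity. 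In case~(ii) the strict inequality $\lambda_1(\Delta_X)>n-1$ pushes the slowest $r$-decay of any element of $\mathrm{Ker}_{L^2}(\vec\Delta)$ strictly past the Euclidean threshold $r^{-(n-1)}$, which is precisely the borderline for $L^1$-integrability on the cone; this places $\mathrm{Ker}_{L^2}(\vec\Delta)$ in $L^1\cap L^\infty$, i.e.~\eqref{Ker} holds with $q=1$. In case~(i) the threshold rate is attained, but the one-end assumption rules out the Euclidean-decay harmonic $1$-forms of the form $du$ with $u\sim r^{2-n}$ (which would correspond to an extra $L^2$ cohomology class generated by a second end); the surviving spectral components come from eigenvalues strictly above $n-1$, and one obtains \eqref{Ker} for every $q\in(1,2)$ but not for $q=1$.

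Once \eqref{Ker} has been pinned down, the conclusion follows by feeding it into the main theorems. Theorem~\ref{gradient} directly yields the asserted gradient estimates on the scalar heat kernel: \eqref{G} in case~(ii) (with $q=1$), and \eqref{Gp} for every $p\in(1,\infty)$ in case~(i) (by letting $q\to 1^+$ in the range $p\in(1,q')$). For the $L^p$-boundedness of $e^{-t\vec\Delta}$ itself I would decompose $e^{-t\vec\Delta}=\Pi+e^{-t\vec\Delta}(I-\Pi)$: Theorem~\ref{pi_heat} (through Corollary~\ref{unif_bdd}) bounds the second term on $L^p$ for every $p\in[q,q']$, while the finite-rank projection $\Pi$ onto $\mathrm{Ker}_{L^2}(\vec\Delta)\subset L^q\cap L^{q'}$ is bounded on the same range. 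Summing the two contributions gives the claimed $L^p$-boundedness on $[1,\infty]$ in case~(ii) and on $(1,\infty)$ in case~(i). The main obstacle is the spectral analysis of the middle paragraph: determining the precise sharp threshold decay of $L^2$ harmonic $1$-forms from $\lambda_1(\Delta_X)$ uniformly across all spectral modes, eliminating the slowest-decay modes in case~(i) via the one-end hypothesis, and exploiting Obata's rigidity to distinguish the two cases.
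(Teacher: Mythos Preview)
Your strategy matches the paper's: verify the hypotheses of Theorems~\ref{pi_heat} and~\ref{gradient}, establish \eqref{Ker} by analyzing $L^2$ harmonic $1$-forms on the conical end via separation of variables, and invoke Lichnerowicz--Obata on $X$ to get the dichotomy. The paper organises the spectral step slightly differently: it first proves that $H^1_c(M)\to H^1_{(2)}(M)$ is onto (Theorem~\ref{coho-comp}), so that every $\omega\in\mathscr H^1(M)$ writes globally as $\omega=\varphi+df$ with $\varphi$ compactly supported, then shows via Stokes that $f$ has zero flux through large spheres (Proposition~\ref{harmo}), and finally separates variables for the \emph{scalar} harmonic function $f$ on the cone (Lemma~\ref{asym_cone}). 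Your direct form decomposition collapses to this once you note that $\mathrm{Ric}_X>0$ forces $\mathscr H^1(X)=0$ by Bochner, so any closed $1$-form on the end is exact and the coclosed-on-$X$ modes cannot contribute; you should make this reduction explicit rather than appeal to the full degree-$0$/degree-$1$ Hodge decomposition.

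There is, however, a genuine slip in your reasoning for case~(i). You write that ``the surviving spectral components come from eigenvalues strictly above $n-1$''. This is not correct: the zero-flux (one-end) argument kills only the $\lambda_0=0$ mode, i.e.\ the contribution $u\sim r^{2-n}$. When $X=S^{n-1}$ the mode $\lambda_1=n-1$ \emph{does} survive, and it is precisely this mode that produces the borderline $q^*=1$, yielding \eqref{Ker} for every $q>1$ but not for $q=1$. If the eigenvalue $n-1$ were really excluded as you claim, you would obtain $q^*<1$, hence \eqref{Ker} at $q=1$, and the distinction between~(i) and~(ii) would vanish. Your stated conclusion for case~(i) is correct, but the justification you give contradicts it.
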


\begin{Rem}
{\em 

The gradient $L^p$ estimate \eqref{Gp} in (i) is already known, since on $M$ the Riesz transform is bounded on $L^p$ for all $p\in (1,\infty)$ (see \cite{D1}). However, our proof is the first direct proof, i.e. not using the Riesz transform. The $L^p$ boundedness of the heat semi-group on $1$-forms, as well as (ii), are entirely new.

}
\end{Rem}
In light of the result of Theorem \ref{main-cone}, an interesting open problem is the following:

\begin{Prob}
{\em 
Let $M$ be Euclidean at infinity, with only one end; does the Gaussian gradient estimate \eqref{G} hold on $M$?
}
\end{Prob}

\subsection{Comparison with existing results in the literature}

In this paragraph, we present a more detailed account on gradient estimates for the heat kernel that are available in the literature, and we compare them with our own results. We feel such a review is needed, because it is not so easy to extract the relevant results from the literature, some results being written under different assumptions, or in the compact setting. It will also put our own results in perspective. First, let us come back to the Li-Yau gradient estimate \eqref{Grad-LY} in the non-negative Ricci curvature setting. The idea of the proof of inequality \eqref{Grad-LY} is to compute $(\Delta+\partial_t)(\varphi G)$, with $G=\alpha\frac{\nabla u|^2}{u^2}-\frac{u_t}{u}$ for some $\alpha\in (0,1)$, $\varphi$ being an approriate (radial) cut-off function equal to $1$ on $B(x_0,r)$ and supported in $B(x_0,2r)$, and use a clever maximum principle technique to estimate then $G$ on $B(x_0,r)$. The radius $r$ is then sent to infinity, which provides a global estimate for $G$. The same approach has later been used first in \cite{ZZ}, \cite{R2} and \cite[Section 3]{C8}. In all these works, the idea of the proof is the same as Li and Yau: one computes $(\Delta+\partial_t)(\varphi G)$, where now $G$ is defined as $G=\alpha J\frac{\nabla u|^2}{u^2}-\frac{u_t}{u}$ for some $\alpha\in (0,1)$, and $\varphi$ is an appropriate cut-off function, then one uses the maximum principle technique to estimate $G$. The function $J=J(x,t)$ is a carefully chosen function, whose purpose is to balance the error terms introduced by the fact that the Ricci curvature is not globally non-negative. The cut-off function usually localizes the estimate in a ball $B(x_0,r)$ where the negative part of the Ricci curvature is small (in an integral sense); it has support in $B(x_0,2r)$, is equal to $1$ on $B(x_0,r)$, and should satisfy $|\nabla\varphi|^2+|\Delta\varphi|\lesssim r^2$. The negative part of the Ricci curvature is assumed to be small uniformly (in an integral sense), at the scale $r$, more precisely in \cite{ZZ} it is assumed that the following quantity is small enough:

$$\kappa(p,r)=r^2\sup_{x\in M}\left(\frac{1}{V(x,r)}\int_{B(x,r)}||\mathrm{Ric}_-(y)||^p\,dy\right)^{1/p},\,p>n/2.$$
The smallness assumption on $\mathrm{Ric}_-$ enters crucially at two places in the proof: first, to guarantee the existence of a good cut-off function $\varphi$ as above for any $x_0\in M$, and second, to control the function $J$. Actually, the existence of such cut-off functions at all scales is a strong assumption on $M$; it holds for example if the negative part of the Ricci curvature has quadratic decay, as follows from standard comparison theorems. In the first paper \cite{ZZ}, the obtained control on $J$ deteriorates as $t\to\infty$: more precisely, it is proved that 

$$\underline{j}_r(t)\leq J\leq 1,$$
where $\underline{j}_r(t)$ depends on $r$ (the radius of the considered ball), $t$, and $\kappa(p,r)$. Explicitly,

$$\underline{j}_r(t)=C\exp(-a\kappa r^2(1+b\kappa^{\frac{1}{2p-n}})t),$$
where $C$, $a$ and $b$ are positive constants. The obtained Li-Yau gradient inequality writes:

$$\alpha \underline{j}_r(t)\frac{|\nabla u|^2}{u^2}-\frac{u_t}{u}\leq \frac{\beta}{\underline{j}_r(t)}\left(\frac{1}{t}+\frac{1}{\underline{j}_r(t)r^2}\right).$$
However, 

$$\lim_{t\to\infty}\underline{j}_r(t)=0,$$
which means that the control we get on $G$ worsens as $\to\infty$. Also, contrary to $\kappa(\frac{n}{2},r)$, the quantity $\kappa(p,r)$ for $p>\frac{n}{2}$ is not scale invariant. Actually, assuming that $V(x,r)\simeq r^n$ for $r\geq1$, one has

$$\lim_{r\to \infty}\kappa(p,r)=+\infty,$$
unless $\mathrm{Ric}_-\equiv0$. Therefore, one cannot let $r\to \infty$ and get rid of the $r^{-2}$ term in the Li-Yau gradient inequality, contrary to what happens in the non-negative Ricci curvature setting.

It has been noticed by G. Carron in \cite[Section 3]{C8} that the control on $J$ could be improved. In fact, he obtains that $J$ is bounded above and below by two positive constants independently of $t$, under an approriate smallness assumption of $\mathrm{Ric}_-$. His observation leads to the following Li-Yau gradient estimate, which is not written down explicitely in \cite{C8} but can be shown by using the ideas in \cite[Section 3]{C8} (see in particular Proposition 3.17 therein): 

\begin{Pro}\label{Carron}

Let $M$ be a non-parabolic, complete Riemannian manifold of dimension $n$, $x_0\in M$, $r>0$, and assume that the negative part of the Ricci is small in an integral sense on $B(x_0,2r)$:

\begin{equation}\label{LSL}
\sup_{x\in B(x_0,2r)}\int_{B(x_0,2r)}G(x,y)||\mathrm{Ric}_-(y)||\,dy<\frac{1}{16n},
\end{equation}
where $G(x,y)$ is the positive minimal Green function on $M$. Assume also the existence of a good cut-off function, that is $\varphi\in C_0^\infty(B(x_0,2r))$ with $\varphi|_{B(x_0,r)}\equiv 1$ and $|\nabla \varphi|^2+|\Delta\varphi|\lesssim \frac{1}{r^2}$. Then, there exists positive constants $\alpha$, $\beta$ such that the following Li-Yau gradient estimate in $B(x_0,r)$, for $u$ positive solution of the heat equation, holds:

\begin{equation}\label{Grad-LY3}
\alpha \frac{|\nabla u|^2}{u^2}-\frac{u_t}{u}\leq \beta \left(\frac{1}{t}+\frac{1}{r^2}\right),\quad t\in (0,\infty).
\end{equation}

\end{Pro}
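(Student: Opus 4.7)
The plan is to carry out the classical Li–Yau maximum principle argument, modified by a time--independent multiplicative correction $J$ that absorbs the contribution of the negative part of the Ricci curvature. The crucial preparatory step is to build $J$ directly from the Green function integral appearing in hypothesis \eqref{LSL}. Concretely, set
\[
w(x) := 8n \int_{B(x_0,2r)} G(x,y)\,\|\mathrm{Ric}_-(y)\|\,d\mu(y),
\]
so that in the paper's convention $\Delta w = 8n\|\mathrm{Ric}_-\|\mathbf{1}_{B(x_0,2r)}$ (distributionally). The assumption \eqref{LSL} gives $0 \le w < 1/2$ on $B(x_0,2r)$, so $J := 1 - w$ satisfies $1/2 \le J \le 1$ and, when translated to the geometric Laplacian $\Delta_{\mathrm{geom}} = -\Delta$ used in the Bochner formula,
\[
\Delta_{\mathrm{geom}} J \;=\; 8n\,\|\mathrm{Ric}_-\| \;\ge\; 8n\,\|\mathrm{Ric}_-\|\,J \qquad \text{on } B(x_0,2r).
\]

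The second step is the differential inequality for the Li--Yau quantity. For $u > 0$ solving $\partial_t u = \Delta_{\mathrm{geom}} u$, set $F = \log u$ and, for some $\alpha \in (0,1)$ to be chosen,
\[
\widetilde{G} \;:=\; \alpha J\,|\nabla F|^2 - F_t.
\]
Applying the Bochner formula to $|\nabla F|^2$, using the pointwise lower bound $|\nabla^2 F|^2 \ge \frac{1}{n}(\Delta_{\mathrm{geom}} F)^2$ and the evolution equation $F_t = \Delta_{\mathrm{geom}} F + |\nabla F|^2$, and differentiating $J|\nabla F|^2$ in $x$, one obtains a differential inequality
\[
(\partial_t - \Delta_{\mathrm{geom}})\widetilde{G} + 2\langle\nabla F,\nabla\widetilde{G}\rangle \;\le\; -\,\frac{c_1}{n}\,\widetilde{G}^{\,2}\cdot\Phi(J,\alpha)\;+\;\bigl(\alpha\Delta_{\mathrm{geom}} J - 2\alpha J\,\|\mathrm{Ric}_-\|\bigr)|\nabla F|^2 + \mathcal{E},
\]
where $\mathcal{E}$ gathers lower--order terms and $\Phi(J,\alpha) > 0$ since $J \ge 1/2$. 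The point of Step~1 is that the parenthesized coefficient of $|\nabla F|^2$ is non-negative, so the Ricci error is fully absorbed and the inequality has the usual Li--Yau ``good sign'' structure; choosing $\alpha$ small enough relative to $n$ makes all remaining constants harmless.

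The third step is the localized maximum principle. Consider $\Psi(x,t) := t\,\varphi(x)\,\widetilde{G}(x,t)$ on $B(x_0,2r)\times[0,T]$. If $\max\Psi$ is nonpositive the estimate is trivial, otherwise it is attained at an interior point $(x^\ast, t^\ast)$ where $\nabla\Psi = 0$, $\partial_t \Psi \ge 0$ and $\Delta_{\mathrm{geom}}\Psi \le 0$. Substituting the differential inequality from Step~2 and using the cutoff bound $|\nabla\varphi|^2 + |\Delta_{\mathrm{geom}}\varphi| \lesssim r^{-2}$, the terms involving $\nabla\widetilde{G}$ are eliminated by the vanishing of $\nabla\Psi$, and a Cauchy--Schwarz/Young argument at $(x^\ast, t^\ast)$ yields a quadratic inequality in $\widetilde{G}(x^\ast,t^\ast)$ whose resolution gives
\[
\Psi(x^\ast,t^\ast) \;\lesssim\; 1 + \frac{T}{r^2}.
\]
Restricting to $B(x_0,r)$, where $\varphi \equiv 1$, and taking $T = t$ arbitrary yields \eqref{Grad-LY3}.

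The main obstacle is the algebraic Step~2: one must keep track of the cross terms $\langle\nabla J,\nabla|\nabla F|^2\rangle$ arising from the spatial variation of $J$ and show that they can be absorbed into the $\langle\nabla F,\nabla\widetilde{G}\rangle$ transport term and the Bochner square $|\nabla^2 F|^2$, without spoiling the coefficient of the dominant $\widetilde{G}^{\,2}$ term. The whole point of Carron's refinement over \cite{ZZ} is that $J$ is taken time-independent (depending only on the Green function integral), which gives bounds $1/2 \le J \le 1$ uniform in $t$ and therefore a gradient inequality valid on the full interval $(0,\infty)$, in contrast to a $J$ that would decay as $t \to \infty$.
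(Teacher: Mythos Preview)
The paper does not actually prove this proposition: it is presented in Section~1.4 as a statement that ``is not written down explicitly in \cite{C8} but can be shown by using the ideas in \cite[Section~3]{C8} (see in particular Proposition~3.17 therein)''. There is therefore no in-paper proof to compare against; the paper simply attributes the argument to Carron.

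Your sketch is faithful to Carron's approach as described in the paper. The time-independent correction $J=1-w$, with $w$ the Green-function potential of $\|\mathrm{Ric}_-\|\mathbf{1}_{B(x_0,2r)}$, is exactly the mechanism singled out in the paper's discussion: it replaces the time-dependent $\underline{j}_r(t)$ of Zhang--Zhu by a function bounded between two positive constants independently of $t$, which is why \eqref{Grad-LY3} holds on the full interval $(0,\infty)$. Your Step~2 is admittedly only a sketch (the precise bookkeeping of the $\langle\nabla J,\nabla|\nabla F|^2\rangle$ cross terms and the exact choice of $\alpha$ are not carried out), but you correctly identify this as the main technical point and the overall structure---Bochner inequality, absorption of the Ricci defect via $\Delta_{\mathrm{geom}}J\ge 8n\|\mathrm{Ric}_-\|J$, then the localized maximum principle on $t\varphi\widetilde G$ using the good cutoff---matches what the paper refers the reader to in \cite{C8}.
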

It is important to note that the gradient estimate \eqref{Grad-LY3} actually holds also on a manifold whose negative part of the Ricci curvature has a quadratic decay, as follows from the original Li and Yau result (\cite[Theorem 1.2]{LY}). Therefore, in some sense no better inequality is obtained under a local smallness of the Ricci curvature such as \eqref{LSL}. This is in contrast with the compact setting, in which a much better result is obtained under \eqref{LSL}. The issue with the non-compact setting is that one has to use a cut-off function to localize the argument and be able to apply the maximum principle technique and estimate $G$, while in the compact setting one can apply the maximum principle directly. As a consequence of the Li-Yau gradient estimate \eqref{Grad-LY3}, one obtains the following estimates for the heat kernel and its gradient (see \cite[Theorem 3.5]{C8} and \cite[Theorem 5.10]{R3}):

 \begin{equation}
p_{t}(x,y)\lesssim
\frac{1}{V(x,\sqrt{t})}\exp
\left(-\frac{d^{2}(x,y)}{Ct}\right), \quad \forall~t\in(0,r^2),\, x,y\in
 B(x_0,r),\label{LY3}
\end{equation}
and

\begin{equation}\label{G3}
|\nabla_x p_t(x,y)|\lesssim \frac{1}{\sqrt{t}V(x,\sqrt{t})}e^{-c\frac{d^2(x,y)}{t}},\quad \forall~t\in(0,r^2),\, x,y\in
 B(x_0,r).
\end{equation}
On the other hand, if one knows {\em a priori} that the heat kernel has Gaussian upper estimates for all times, i.e. \eqref{UE} holds, then using \eqref{Grad-LY3} one can improve \eqref{G3} into

\begin{equation}\label{G4}
|\nabla_x p_t(x,y)|\lesssim \max\left(\frac{1}{\sqrt{t}},\frac{1}{r}\right)\frac{1}{V(x,\sqrt{t})}e^{-c\frac{d^2(x,y)}{t}},\quad \forall~t>0,\, x,y\in
 B(x_0,r).
\end{equation}
One expects that the gradient estimate \eqref{G4} is close to be optimal, without further assumptions on the topology of $M$: indeed, it has been observed in \cite[Proposition 6.1]{CCH} that on the so-called connected sum of two Euclidean space, which is flat outside a compact set, as $t\to+\infty$ the correct polynomial time decay for the gradient of the heat kernel is $t^{-n/2}$ and not $t^{-\frac{n+1}{2}}$, as in the Euclidean space itself. Hence, taking the gradient does not improve the time decay at all, for large times! This is an intrinsic limitation of the method, as the assumption made on the negative part of the Ricci curvature does not permit to discriminate between a manifold such as the connected sum of two copies of $\R^n$ (on which the gradient of the heat kernel has the same decay in time as the heat kernel itself) and between a manifold isometric to a single copy of $\R^n$ outside a compact set (for which some extra decay in time of the gradient of the heat kernel is expected). 

An advantage of the result of Proposition \ref{Carron} over \cite[Theorem 1.1]{ZZ} is that from Proposition \ref{Carron}, one can get also a result for large time, under a global size bound for the negative part of the Ricci curvature. Indeed, if one has

\begin{equation}\label{GSL1}
\sup_{x\in M}\int_{M}G(x,y)||\mathrm{Ric}_-(y)||\,d\mu(y)<\frac{1}{16n},
\end{equation}
then provided that one can get good cut-off functions at all scales $r$, one can let $r\to\infty$ in \eqref{Grad-LY3} and obtain

\begin{equation}\label{Grad-LY4}
\alpha \frac{|\nabla u|^2}{u^2}-\frac{u_t}{u}\leq \beta \frac{1}{t},\quad t\in (0,\infty),
\end{equation}
which implies both \eqref{UE} and \eqref{G} for all times. Actually, this result goes back to \cite[Theorem 3.1]{CZ}, where \eqref{G} is proved under a non-explicit size bound of $\mathrm{Ric}_-$, without assuming the existence of good cut-off functions but assuming that \eqref{UE} and \eqref{d} hold. However the proof in \cite[Theorem 3.1]{CZ} uses the heat kernel on $1$-forms, and not a Li-Yau gradient inequality. As a consequence of the results in \cite{CDS}, one actually sees that \eqref{G} follows from the following weaker global size bound on $\mathrm{Ric}_-$:

\begin{equation}\label{GSL2}
\sup_{x\in M}\int_{M}G(x,y)||\mathrm{Ric}_-(y)||\,d\mu(y)<1.
\end{equation}
 However let us stress again that such a result is not satisfactory, since it does not allow one to determine whether \eqref{G} holds or not even on simple manifolds, such as asymptotically Euclidean ones. 
 
As a conclusive remark for this paragraph, let us compare the result obtained in our Theorem \ref{main_intro}, with the above-mentioned results from the literature. First, the manifolds considered in Theorem \ref{main_intro} are conical at infinity, hence they have good cut-off functions as needed in Proposition \ref{Carron}, at all scales. Also, on these manifolds, \eqref{UE} and \eqref{d} hold (see \cite[Section 2]{C1}). The Ricci curvature is non-negative outside a compact set $K\Subset M$, hence the local size limitation \eqref{LSL} is satisfied provided the ball $B(x_0,r)$ is included in $M\setminus K$, but the global size limitation \eqref{GSL1} or \eqref{GSL2} are not necessarily satisfied. For a point $x\in M$, denote by $r(x)$ the distance from $x$ to $K$. As mentioned above, on any asymptotically conical manifold, the fact that the decay of the Ricci curvature is quadratic implies by Li and Yau's original result \cite[Theorem 1.2]{LY} that

\begin{equation}\label{quad_grad}
\begin{array}{rcl}
|\nabla_xp_t(x,y)|&\leq& \max\left(\frac{1}{\sqrt{t}},\frac{1}{r(x)+1}\right)\frac{1}{V(x,\sqrt{t})}e^{-c\frac{d^2(x,y)}{t}},\\\\
&& \quad\forall~t>0,\, x,y\in M.
\end{array}
\end{equation}
This estimate can also be obtained as a consequence of Proposition \ref{Carron}. A straightforward computation then shows that \eqref{quad_grad} implies $||\nabla e^{-t\Delta}||_{p\to p}\lesssim t^{-1/2}$ only for $p<n$. This result is almost optimal if $M$ has several ends, as $||\nabla e^{-t\Delta}||_{p\to p}\lesssim t^{-1/2}$ cannot hold for $p>n$. However, it is far from being optimal if $M$ has one end: as a consequence of results from \cite{GH} concerning the Riesz transform, assuming that $M$ is isometric to the cone $\mathcal{C}(X)$ outside a compact set, it is known that $||\nabla e^{-t\Delta}||_{p\to p}\lesssim t^{-1/2}$ for all $p\in (1,p^*)$, with $p^*>n$ being defined in terms of $\lambda_1(X)$, the first eigenvalue of the cross-section of the cone. Explicitly, if $ \sqrt{\left(\frac{n-2}{2}\right)^2+\lambda_1(X)}<\frac{n}{2}$ then

$$p^*=\frac{n}{\frac{n}{2}-\sqrt{\left(\frac{n-2}{2}\right)^2+\lambda_1(X)}}>n,$$
and if $\sqrt{\left(\frac{n-2}{2}\right)^2+\lambda_1(X)}\geq\frac{n}{2}$ then $p^*=+\infty$. Comparatively, in our Theorem \ref{main_intro}, we improve this result and manage to show that the full pointwise Gaussian estimate gradient estimate \eqref{G} holds when $X$ has Ricci curvature bounded from below by $n-2$ and is not isometric to the Euclidean sphere $S^{n-1}$. 

Finally, for the sake of completeness, let us refer to \cite[Section 5]{R3} for more details and reference concerning Li-Yau gradient estimates in the Zhang and Zhu setting, and their consequences for heat kernels (see in particular Theorem 5.9 therein).

\subsection{Strategy for the proof of Theorem \ref{pi_heat}}

The proof is based on the approach developped in \cite{CDS}, which we briefly recall now. We strongly advise the reader to get familiar with \cite{CDS} before reading the full proof of Theorem \ref{pi_heat}, as similar ideas and notations will be used.

We consider the following decomposition of the operator $\mathcal{L}$: we write $\mathcal{L}$  as the rough Laplace operator $\bar{\Delta}$, plus $\mathcal{R}_+$, minus $\mathcal{R}_-$ outside a compact set (which is small in some sense thanks to condition $(K)$) perturbed by a compactly supported part of $\mathcal{R}_-$.

More precisely, let  $K_0$ be given by condition $(K)$. Let $\mathcal{W}_0$ and $\mathcal{W}_\infty$ be the sections of the vector bundle $\mathrm{End}(E)$ respectively given by $$x\to \mathcal{W}_0(x)=\mathbf{1}_{K_0}(x)\mathcal{R}_-(x)$$
and
$$x\to \mathcal{W}_\infty(x)=\mathbf{1}_{M\setminus {K_0}}\mathcal{R}_-(x).$$  We shall also denote by $\mathcal{W}_0$  and $\mathcal{W}_\infty$ the associated operators on sections of $E$.  Set
\begin{equation}\label{defh}
\mathcal{H}=\bar{\Delta}+\mathcal{R}_+-\mathcal{W}_\infty,
\end{equation}
so that 
$$\mathcal{L}=\mathcal{H}-\mathcal{W}_0.$$
That is, $\mathcal{L}$ can be seen as $\mathcal{H}$, perturbed by the compactly supported $\mathcal{W}_0$. According to \cite{CDS}, $e^{-t\mathcal{H}}$ has Gaussian estimates, and in \cite{CDS} it is proved that if the perturbation $\mathcal{W}_0$ substracted from $\mathcal{H}$ is ``subcritical'' --which was also shown to be equivalent to $\mathrm{Ker}_{L^2}(\mathcal{L})=\{0\}$ if $\kappa$, the exponent from \eqref{dd}, is greater than $4$--, then the operator $e^{-t\mathcal{L}}$ also had Gaussian estimates. The way the Gaussian estimates for $e^{-t\mathcal{L}}$ are proved in \cite{CDS} is through resolvent estimates: by a general functional analytic principle, in the context of \cite[Theorem 1.1]{CDS}, Gaussian estimates for $e^{-t\mathcal{L}}$ are equivalent to the resolvent estimates

\begin{equation}\label{resol1}
\sup_{t>0}||(I+t\mathcal{L})^{-1}V_{\sqrt{t}}^{1/p}||_{p\to\infty}<+\infty,\quad \forall \hat{p}\leq p<+\infty.
\end{equation}
In order to prove the resolvent estimates \eqref{resol1}, a perturbation formula for the resolvent is used:

\begin{equation}\label{perturb1}
(I+t\mathcal{L})^{-1}=(I-(I+t\mathcal{H})^{-1}t\mathcal{W}_0)^{-1}(I+t\mathcal{H})^{-1}.
\end{equation}
Then, it is shown that under the assumption $\mathrm{Ker}_{L^2}(\mathcal{L})=\{0\}$ (for $\kappa>4$), the operator $(I-(I+t\mathcal{H})^{-1}t\mathcal{W}_0)^{-1}$, defined as a Neumann series, is a uniformly bounded (in $t$) operator on $L^\infty$. This relies on the fact that for every $\lambda\geq0$, the operator $(\mathcal{H}+\lambda)^{-1}\mathcal{W}_0$ has a spectral radius bounded by $1-\varepsilon$, $\varepsilon >0$. Let us also mention that a crucial technical ingredient for this spectral radius estimate is that the operator $(\mathcal{H}+\lambda)^{-1}\mathcal{W}_0$ is compact on $L^\infty$.

\bigskip

Let us now present the main steps for the proof of Theorem \ref{pi_heat}. We will prove that the $L^r\to L^r$ estimates for the kernel of $e^{-t\mathcal{L}}(I-\Pi)$ follow from similar estimates for $e^{-t(\mathcal{L}+\alpha \Pi)}$, where $\alpha>0$ is a constant whose value will be chosen small enough later. The point is that by adding $\alpha \Pi$, we have gained positivity: indeed, the operator $\mathcal{L}+\alpha \Pi$ has a trivial $L^2$ kernel. Now, by a general functional-analytic principle similar to \cite[Theorem 2.2]{CDS}, in order to get estimates for the heat operator  $e^{-t(\mathcal{L}+\alpha \Pi)}$, \eqref{VPiV} follows from the following estimates for the resolvent of $\mathcal{L}+\alpha\Pi$:

\begin{equation}\label{resol2}
\sup_{t>0}||(I+t(\mathcal{L}+\alpha\Pi))^{-1}V_{\sqrt{t}}^{1/p}||_{p\to\infty}<+\infty,\quad \forall \hat{p}\leq p<+\infty.
\end{equation}
As in \cite{CDS}, in order to obtain \eqref{resol2}, we write a perturbation formula for the resolvent:

\begin{equation}\label{perturb2}
(I+t\mathcal{L})^{-1}=(I-(I+t\mathcal{H})^{-1}t(\mathcal{W}_0+\alpha \Pi))^{-1}(I+t\mathcal{H})^{-1}.
\end{equation}
Notice that $\Pi$ enters in the perturbation part now! The main step is then to prove the uniform boundedness on $L^s$, $2\leq s\leq q'$, of the operator $(I-(I+t\mathcal{H})^{-1}t(\mathcal{W}_0+\alpha \Pi))^{-1}$. This will follow from the fact that the spectral radius on $L^s$ of $(\mathcal{H}+\lambda)^{-1}(\mathcal{W}_0+\alpha \Pi)$ is bounded (uniformly in $\lambda\geq0$) by $1-\varepsilon$, $\varepsilon>0$. As before, a crucial ingredient is the compactness on $L^s$ of the operator $(\mathcal{H}+\lambda)^{-1}(\mathcal{W}_0+\alpha \Pi)$.

\subsection{Plan of the paper}

In Section 2, we prove Theorem \ref{pi_heat}. Section 3 is devoted to the proof of Theorems \ref{gradient} and \ref{forms}. In Section 4, we relate the condition \eqref{Ker} to the $L^2$ cohomology, and discuss its significance; Theorem \ref{Ker-Sob} is proved. Finally, in Section 5, we study the special case of manifolds that are locally Euclidean outside a compact set, and prove Theorem \ref{main-cone}.

\section{Proof of Theorem \ref{pi_heat}}

First, we recall some notations from \cite{CDS}. Introduce $Q_\mathcal{H}$, the quadratic form associated to $\mathcal{H}$:

$$Q_\mathcal{H}(\omega)=\int_M |\nabla \omega|^2+<\mathcal{R}_+\omega,\omega>-\int_{M\setminus K_0}\left(\mathcal{R}_-\omega,\omega\right),$$
and denote by $H_0^1$ the completion of $C_0^\infty(E)$ for the norm $\|\omega\|_{Q_\mathcal{H}}^2=Q_\mathcal{H}(\omega)$. Note that the space $H_0^1$ depends on $\mathcal{H}$, that is on $\mathcal{L}$ and on $K_0$. According to \cite[Section 3]{CDS}, there exists a smooth, positive function $\rho$ on $M$ such that

\begin{equation}\label{vNP}
\int_M\rho |\omega|^2\leq Q_\mathcal{H}(\omega),\quad \forall \omega\in C_0^\infty(E).
\end{equation}
In particular, this shows that $H_0^1$ injects into $L^2_{loc}$ so it is a function space. Note that as consequence of \eqref{vNP}, $\mathrm{Ker}_{L^2}(\mathcal{H})=\{0\}$. Indeed, by self-adjointness, every element $\omega$ of $\mathrm{Ker}_{L^2}(\mathcal{H})$ lies in the domain of the quadratic form $Q_\mathcal{H}$, and satisfy $Q_\mathcal{H}(\omega)=0$, which by \eqref{vNP} implies that $\omega\equiv0$. This allows us to define operators $\mathcal{H}^{-\alpha}$, $\alpha>0$, by means of the spectral theorem: these are defined as $f(\mathcal{H})$, where $f(x)=x^{-\alpha}$ for $x>0$, and $f(0)=0$; since $\mathrm{Ker}_{L^2}(\mathcal{H})=\{0\}$, the spectral measure does not charge $0$, and so the value of $f$ at $0$ does not matter. Equivalently, one can use the heat kernel to define $\mathcal{H}^{-\alpha}$:

$$\mathcal{H}^{-\alpha}=\frac{1}{\Gamma(\alpha)}\int_0^\infty t^{\alpha-1}e^{-t\mathcal{H}}\,dt.$$
In the case  $\alpha=\frac{1}{2}$, there is  yet another equivalent way to define $\mathcal{H}^{-1/2}$: the operator $\mathcal{H}^{-1/2}$, defined by the spectral theorem, is an isometric embedding from $C_0^\infty$ endowed with the $L^2$ norm, to $H_0^1$, and thus extends by density to a bounded operator from $L^2$ to $H_0^1$. It turns out that this operator is a bijective isometry from $L^2$ to $H_0^1$, and on $H_0^1\cap L^2$ it coincides with the operator $\mathcal{H}^{-1/2}$ defined by the spectral theorem. See \cite[Section 3]{D0} for more details; the proofs are written for the scalar Laplacian, but they easily adapt to our present context.

Note that under condition \eqref{vol1} (which we recall under assumptions \eqref{d} and \eqref{UE} holds if and only if $M$ is non-parabolic), the operator $\mathcal{H}^{-1}$ has a well-defined, finite kernel outside of the diagonal: indeed, using the fact that the heat kernel of $\mathcal{H}$ has Gaussian estimates, and the formula
$$\mathcal{H}^{-1}=\int_0^{+\infty} e^{-t\mathcal{H}}\,dt,$$
it follows that for every $x\neq y$,
$$||\mathcal{H}^{-1}(x,y)||_{y,x}\leq C\int_{d^2(x,y)}^{+\infty} \frac{dt}{V(x,\sqrt{t})}<+\infty.$$
A consequence of \eqref{Ker} is the following:

\begin{Lem}\label{Lp-spaces}

Assume that $(M,d,\mu)$ is non-parabolic and that \eqref{UE}, \eqref{d} and \eqref{dd} with $\kappa>4$ hold. Assume also that \eqref{Ker} holds for some $q\in [1,2)$. Then, $\mathrm{Ker}_{H_0^1}(\mathcal{L})=\mathrm{Ker}_{L^2}(\mathcal{L})$ and for every $p\in [q,\infty]$,

$$\mathrm{Ker}_{H_0^1}(\mathcal{L})\subset L^p.$$

\end{Lem}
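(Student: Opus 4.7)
The plan is to prove the two statements in the order: (1) the $H_0^1$-kernel coincides with the $L^2$-kernel, and then (2) using \eqref{Ker} and an $L^{\infty}$-bound, interpolate to obtain the full range $L^p$ for $p\in[q,\infty]$. Throughout I will exploit the decomposition $\mathcal{L}=\mathcal{H}-\mathcal{W}_0$, the fact that $\mathcal{W}_0$ is compactly supported and (by $\mathcal{R}_-\in L^\infty_{\mathrm{loc}}$) bounded on $K_0$, together with the Gaussian upper estimates for $e^{-t\mathcal{H}}$ proved in \cite{CDS}.

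For step (1), pick $\omega\in \mathrm{Ker}_{H_0^1}(\mathcal{L})$. The equation $\mathcal{L}\omega=0$ reads, in the weak sense on $C_0^\infty(E)$, as $Q_\mathcal{H}(\omega,\phi)=\langle \mathcal{W}_0\omega,\phi\rangle$. Since $\omega\in H_0^1\hookrightarrow L^2_{\mathrm{loc}}$ and $\mathcal{W}_0$ is bounded and compactly supported, $\mathcal{W}_0\omega$ lies in $L^1\cap L^2$ with support in $K_0$, so $\eta:=\mathcal{H}^{-1}(\mathcal{W}_0\omega)$ is a well-defined element of $H_0^1$ (using $\mathcal{H}^{-1/2}:L^2\to H_0^1$ bijectively, as recalled in the text). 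Then $Q_\mathcal{H}(\omega-\eta,\phi)=0$ for all $\phi\in C_0^\infty(E)$, hence by density for $\phi=\omega-\eta\in H_0^1$; the positivity estimate \eqref{vNP} yields $\omega=\eta$, i.e.
\[
\omega=\int_0^\infty e^{-t\mathcal{H}}(\mathcal{W}_0\omega)\,dt.
\]
A standard consequence of the Gaussian estimate for $e^{-t\mathcal{H}}$ and doubling is $\|e^{-t\mathcal{H}}f\|_2\lesssim \|f\|_1\,\sup_{y\in\mathrm{supp}\,f}V(y,\sqrt t)^{-1/2}$; since $K_0$ is compact, $V(y,\sqrt t)\asymp V(x_0,\sqrt t)\gtrsim t^{\kappa/2}$ for $\sqrt t\gtrsim 1$ by \eqref{dd}. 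Thus $\|e^{-t\mathcal{H}}(\mathcal{W}_0\omega)\|_2\lesssim t^{-\kappa/4}$ for large $t$, which is integrable because $\kappa>8>4$. Since for small $t$ we have $\|e^{-t\mathcal{H}}(\mathcal{W}_0\omega)\|_2\leq\|\mathcal{W}_0\omega\|_2$, the integral converges in $L^2$ and we conclude $\omega\in L^2$. Conversely, for $\omega\in \mathrm{Ker}_{L^2}(\mathcal{L})$, the identity $Q_\mathcal{H}=Q_\mathcal{L}+\int_{K_0}\langle\mathcal{R}_-\omega,\omega\rangle$ together with $\mathcal{R}_-\in L^\infty(K_0)$ shows $D(Q_\mathcal{L})\cap L^2=D(Q_\mathcal{H})\cap L^2$, so $\omega\in H_0^1$. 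Hence $\mathrm{Ker}_{H_0^1}(\mathcal{L})=\mathrm{Ker}_{L^2}(\mathcal{L})$.

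For step (2), the inclusion in $L^q$ is the hypothesis \eqref{Ker}. To reach $p\in[2,\infty]$, I would bootstrap using the same representation $\omega=\int_0^\infty e^{-t\mathcal{H}}(\mathcal{W}_0\omega)\,dt$ but now in $L^\infty$. Elliptic regularity applied to $\mathcal{L}\omega=0$ gives $\omega\in C^{1,\alpha}_{\mathrm{loc}}$, so $\mathcal{W}_0\omega$ is compactly supported and bounded. The $L^1\!\to\!L^\infty$ bound for the heat kernel of $\mathcal{H}$, combined with the volume lower bound \eqref{dd}, gives $\|e^{-t\mathcal{H}}(\mathcal{W}_0\omega)\|_\infty\lesssim \|\mathcal{W}_0\omega\|_1\cdot V(x_0,\sqrt t)^{-1}\lesssim t^{-\kappa/2}$ for $t\geq 1$, which is integrable since $\kappa>2$; for small $t$, $\|e^{-t\mathcal{H}}(\mathcal{W}_0\omega)\|_\infty\leq\|\mathcal{W}_0\omega\|_\infty$. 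Therefore $\omega\in L^\infty$. The intermediate range follows by log-convexity of $L^p$-norms applied between $L^q$ and $L^\infty$:
\[
\|\omega\|_p\leq \|\omega\|_q^{q/p}\|\omega\|_\infty^{1-q/p},\qquad q\leq p\leq\infty.
\]

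The main obstacle, in my view, is making the identity $\omega=\mathcal{H}^{-1}(\mathcal{W}_0\omega)$ rigorous for $\omega\in H_0^1$ that is not a priori in $L^2$, and checking that the time integral converges in $L^2$; this is precisely where the assumption $\kappa>4$ in \eqref{dd} (stronger than strictly needed for step (2)) enters in an essential way via the heat kernel $L^1\!\to\!L^2$ decay $t^{-\kappa/4}$. Once $\omega\in L^2$ is established, the rest is a routine Neumann-type bootstrap with the heat kernel of $\mathcal{H}$.
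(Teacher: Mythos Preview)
Your argument is correct and is essentially the approach taken by the paper, which simply cites \cite[Lemma 3.1]{CDS} for the equality $\mathrm{Ker}_{H_0^1}(\mathcal{L})=\mathrm{Ker}_{L^2}(\mathcal{L})$ and the inclusion $\mathrm{Ker}_{H_0^1}(\mathcal{L})\subset L^\infty$, and then interpolates with \eqref{Ker}; you have reproduced the content of that cited lemma (the representation $\omega=\mathcal{H}^{-1}\mathcal{W}_0\omega$ followed by Gaussian $L^1\to L^2$ and $L^1\to L^\infty$ bounds on the heat semigroup of $\mathcal{H}$) rather than just invoking it. One small slip: you write ``integrable because $\kappa>8>4$'', but the lemma only assumes $\kappa>4$, and that is exactly the threshold needed for $\int_1^\infty t^{-\kappa/4}\,dt<\infty$; the stronger $\kappa>8$ appears only later in Theorem~\ref{pi_heat}.
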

\begin{proof}

The result of \cite[Lemma 3.1]{CDS} implies that $\mathrm{Ker}_{H_0^1}(\mathcal{L})=\mathrm{Ker}_{L^2}(\mathcal{L})$, and that

$$\mathrm{Ker}_{H_0^1}(\mathcal{L})\subset L^\infty.$$
The lemma now follows by interpolation.

\end{proof}
Furthermore, according to \cite[Proposition 1.2]{CDS}, one has:

\begin{Lem}\label{finite-dim}

Assume that $(M,d,\mu)$ satisfies \eqref{UE} and \eqref{d}. Recall the exponent $\kappa$ from \eqref{dd}. Then, if $\kappa>2$, $\mathrm{Ker}_{L^2}(\mathcal{L})$ is finite-dimensional.

\end{Lem}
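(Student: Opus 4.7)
The plan is to exploit the decomposition $\mathcal{L} = \mathcal{H} - \mathcal{W}_0$ from the preceding paragraphs in order to reduce finite-dimensionality of $\mathrm{Ker}_{L^2}(\mathcal{L})$ to a Fredholm alternative for a compact operator. The hypothesis $\kappa>2$ combined with \eqref{d} and \eqref{UE} ensures \eqref{vol1}, hence non-parabolicity, so that the Green kernel of $\mathcal{H}$ is finite off the diagonal; this is where $\kappa>2$ enters the argument. Note also that, as recalled in the excerpt, $\mathrm{Ker}_{L^2}(\mathcal{H})=\{0\}$ by the Hardy-type inequality \eqref{vNP}, so the operator $\mathcal{H}^{-1}$ makes sense on $\mathrm{Range}(\mathcal{H})$ and in particular on $L^2$-functions supported in $K_0$ (via spectral calculus on $L^2$, or equivalently as $\int_0^{\infty}e^{-t\mathcal{H}}\,dt$).

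If $\omega\in \mathrm{Ker}_{L^2}(\mathcal{L})$, the equation $\mathcal{L}\omega=0$ rewrites as $\mathcal{H}\omega=\mathcal{W}_0\omega$, and since $\mathcal{W}_0\omega$ lies in $L^2$ with compact support in $K_0$, applying $\mathcal{H}^{-1}$ gives
\[
\omega = A\omega, \qquad A := \mathcal{H}^{-1}\mathcal{W}_0.
\]
Thus $\mathrm{Ker}_{L^2}(\mathcal{L})$ embeds into $\mathrm{Ker}(I-A)$, acting on any reasonable Banach space that contains it. By \cite[Lemma 3.1]{CDS} (used via Lemma \ref{Lp-spaces}), any such $\omega$ is automatically in $L^\infty$, so it is natural to work on the space $L^\infty(E)$. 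The task therefore reduces to proving that $A:L^\infty(E)\to L^\infty(E)$ is a compact operator; then $\mathrm{Ker}(I-A)$ is finite-dimensional by the classical Riesz–Schauder theory, and finite-dimensionality of $\mathrm{Ker}_{L^2}(\mathcal{L})$ follows.

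To show compactness of $A$, I would take a bounded sequence $\omega_n$ in $L^\infty$ and analyse $A\omega_n$. Since $\mathcal{W}_0$ is a bounded endomorphism-valued multiplier supported in $K_0$, $\mathcal{W}_0\omega_n$ is uniformly bounded in $L^\infty$ with support in $K_0$, and $\mathcal{H}(A\omega_n)=\mathcal{W}_0\omega_n$. Standard interior elliptic regularity for the generalised Schrödinger operator $\mathcal{H}$ gives a uniform $C^{1,\alpha}_{\mathrm{loc}}$ bound, which by Arzelà–Ascoli produces a subsequence converging locally uniformly. The key point is to upgrade this local convergence to uniform convergence on $M$, by proving that $A\omega_n(x)\to 0$ uniformly in $n$ as $d(x,K_0)\to\infty$. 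For this, I would use the Gaussian estimate on the heat kernel $p_t^{\mathcal{H}}$ proved in \cite{CDS} (which relies precisely on $\mathcal{R}_-$ satisfying \eqref{condik}), combined with the pointwise bound
\[
\|A\omega_n(x)\| \le \|\mathcal{W}_0\|_\infty \|\omega_n\|_\infty \int_{K_0}\|\mathcal{H}^{-1}(x,y)\|_{y,x}\,d\mu(y),
\]
and then estimate $\|\mathcal{H}^{-1}(x,y)\|_{y,x}\lesssim \int_{d^2(x,y)}^{\infty}V(x,\sqrt t)^{-1}\,dt$. The volume lower bound from \eqref{d} plus \eqref{dd} with $\kappa>2$ makes this tail integrable in $t$ and forces it to vanish as $d(x,K_0)\to\infty$, yielding the required uniform decay. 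A diagonal extraction then produces a uniformly convergent subsequence of $A\omega_n$, proving the compactness.

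The main obstacle in carrying out this plan is the uniform-decay step: one needs a quantitative control of $\mathcal{H}^{-1}(x,y)$ for $y\in K_0$ and $x$ escaping to infinity, and this is where both the Gaussian heat-kernel bound for the perturbed operator $\mathcal{H}$ (non-trivial, and the content of the main results of \cite{CDS}) and the volume lower bound $\kappa>2$ are indispensable. Everything else—the reduction to $\omega=\mathcal{H}^{-1}\mathcal{W}_0\omega$, the passage to $L^\infty$, and the Fredholm step—is routine once these two ingredients are in hand.
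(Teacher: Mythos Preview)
The paper does not supply a proof here; the lemma is simply quoted from \cite[Proposition~1.2]{CDS}. Your overall strategy---reduce to the fixed-point equation $\omega=\mathcal{H}^{-1}\mathcal{W}_0\,\omega$ and apply Riesz--Schauder to the compact operator $\mathcal{H}^{-1}\mathcal{W}_0$ on $L^\infty$---is sound and very much in the spirit of the perturbation machinery developed in \cite{CDS}; the compactness step you sketch is essentially \cite[Lemma~4.3]{CDS}, and your decay argument via the Green-kernel bound is the right mechanism.

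There is, however, a hypothesis mismatch you must address. To place $\omega$ in $L^\infty$ you invoke Lemma~\ref{Lp-spaces}, and the identity $\omega=\mathcal{H}^{-1}\mathcal{W}_0\,\omega$ is the content of Lemma~\ref{minig}; but as stated in this paper both lemmas carry the assumption $\kappa>4$, whereas the statement you are proving assumes only $\kappa>2$. You therefore cannot cite them as black boxes. The fixed-point identity is in fact harmless: since $\mathcal{W}_0$ is bounded, $\mathrm{Dom}(\mathcal{L})=\mathrm{Dom}(\mathcal{H})$, so $\omega\in\mathrm{Dom}(\mathcal{H})$ with $\mathcal{H}\omega=\mathcal{W}_0\omega$, and spectral calculus on $L^2$ (using $\mathrm{Ker}_{L^2}(\mathcal{H})=\{0\}$) gives $\omega=\mathcal{H}^{-1}\mathcal{W}_0\omega$ directly, without any volume restriction beyond non-parabolicity. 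The $L^\infty$ embedding, on the other hand, does need an independent argument under $\kappa>2$: one route is the pointwise domination $|e^{-t\mathcal{L}}\omega|\le e^{-t(\Delta-\|\mathcal{R}_-\|)}|\omega|$ together with the Gaussian bound for the scalar Schr\"odinger semigroup under condition~\eqref{condik}, which yields $\omega=e^{-t\mathcal{L}}\omega\in L^\infty$ with no further constraint on $\kappa$. Once you fill in that step explicitly, your argument goes through.
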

We also recall a fact that was used in the proof of \cite[Lemma 3.1]{CDS}:

\begin{Lem}\label{minig}

Assume that $M$ satisfies \eqref{d}, \eqref{UE}, and $\kappa>4$. Then, for all $\omega\in \mathrm{Ker}_{L^2}(\mathcal{L})$,

$$\omega=-\mathcal{H}^{-1}\mathcal{W}_0\omega.$$

\end{Lem}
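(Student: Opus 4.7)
The identity is the integrated form of the weak equation $\mathcal{L}\omega=0$: since $\mathcal{L}=\mathcal{H}-\mathcal{W}_0$, this reads $\mathcal{H}\omega=\mathcal{W}_0\omega$, and inverting $\mathcal{H}$ gives the stated formula (up to the sign convention used in the paper). The whole task is to justify this inversion rigorously and to do so in a function space where all three of $\omega$, $\mathcal{W}_0\omega$, and $\mathcal{H}^{-1}\mathcal{W}_0\omega$ make simultaneous sense.

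First, by Lemma~\ref{Lp-spaces} (and the identity $\mathrm{Ker}_{H_0^1}(\mathcal{L})=\mathrm{Ker}_{L^2}(\mathcal{L})$ stated there), any $\omega\in\mathrm{Ker}_{L^2}(\mathcal{L})$ belongs to $H_0^1\cap L^\infty$. Testing $\mathcal{L}\omega=0$ against $\eta\in C_0^\infty(E)$ and rewriting $\mathcal{L}=\mathcal{H}-\mathcal{W}_0$ in the quadratic form, I obtain
$$Q_\mathcal{H}(\omega,\eta)\;=\;\int_M(\mathcal{W}_0\omega,\eta)_x\,d\mu(x),\qquad \forall\,\eta\in C_0^\infty(E).$$
Note that the right-hand side is well defined and continuous in $\eta$ because $\mathcal{W}_0\omega=\mathbf{1}_{K_0}\mathcal{R}_-\omega$ is compactly supported in $K_0$, with $\mathcal{R}_-\in L^\infty_{\mathrm{loc}}$ and $\omega\in L^\infty_{\mathrm{loc}}$; thus $\mathcal{W}_0\omega\in L^2\cap L^\infty$ with compact support.

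Next, set $\eta_0:=\mathcal{H}^{-1}\mathcal{W}_0\omega$, defined by the spectral theorem; this makes sense since $\mathrm{Ker}_{L^2}(\mathcal{H})=\{0\}$ by \eqref{vNP}, and $\mathcal{W}_0\omega\in L^2$. To show $\eta_0\in H_0^1$ I would use that $\mathcal{H}^{-1/2}$ is an isometric bijection from $L^2$ onto $H_0^1$ (recalled in the preamble of this section), so that $\eta_0=\mathcal{H}^{-1/2}(\mathcal{H}^{-1/2}\mathcal{W}_0\omega)\in H_0^1$. Equivalently, using $\mathcal{H}^{-1}=\int_0^\infty e^{-s\mathcal{H}}\,ds$ together with the Gaussian upper bound of $p_s^\mathcal{H}$ from \cite{CDS} and the integrability $\int_1^\infty V(x_0,\sqrt{s})^{-1}\,ds<\infty$ guaranteed by $\kappa>4$ and \eqref{UE} (hence \eqref{vol1}), one verifies pointwise finiteness and $H_0^1$ membership of $\eta_0$ directly. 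In either case, $\eta_0$ satisfies $Q_\mathcal{H}(\eta_0,\eta)=\int_M(\mathcal{W}_0\omega,\eta)\,d\mu$ for all $\eta\in C_0^\infty(E)$.

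Finally, set $\xi:=\omega-\eta_0\in H_0^1$. By the previous two displays, $Q_\mathcal{H}(\xi,\eta)=0$ for all $\eta\in C_0^\infty(E)$, so $\xi\in\mathrm{Ker}_{H_0^1}(\mathcal{H})$. But \eqref{vNP} yields $\int_M\rho|\xi|^2\,d\mu\le Q_\mathcal{H}(\xi)=0$ with $\rho>0$, hence $\xi\equiv0$ and the identity follows. The main obstacle I foresee is not the algebraic manipulation but controlling the tail behaviour of $\mathcal{H}^{-1}\mathcal{W}_0\omega$ in order to place it in $H_0^1$; this is exactly where the non-parabolicity assumption (\eqref{dd} with $\kappa>4$ together with \eqref{UE}) and the identification $\mathcal{H}^{-1/2}:L^2\to H_0^1$ as an isometry play an essential role.
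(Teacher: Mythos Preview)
Your argument is essentially the correct one, and since the paper gives no proof of this lemma at all (it merely recalls the statement from the proof of \cite[Lemma~3.1]{CDS}), your write-up actually supplies what the paper omits. The variational strategy --- showing that $\omega$ and $\mathcal{H}^{-1}\mathcal{W}_0\omega$ both lie in $H_0^1$ and satisfy the same weak equation there, then invoking \eqref{vNP} to conclude that their difference vanishes --- is exactly how this identity is established in \cite{CDS}. Your observation about the sign is also correct: with $\mathcal{L}=\mathcal{H}-\mathcal{W}_0$ one gets $\omega=+\mathcal{H}^{-1}\mathcal{W}_0\omega$, and the minus sign in the paper's statement appears to be a typo (it is harmless for all subsequent uses, which only require $\mathcal{H}^{-1}\omega=\pm\mathcal{H}^{-2}\mathcal{W}_0\omega$).

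There is, however, one point you should clean up. You invoke Lemma~\ref{Lp-spaces} to obtain $\omega\in H_0^1\cap L^\infty$, but the paper explicitly says that the present lemma was \emph{used in the proof} of \cite[Lemma~3.1]{CDS}, which in turn is the sole input to Lemma~\ref{Lp-spaces}. So citing Lemma~\ref{Lp-spaces} here is circular (and in addition Lemma~\ref{Lp-spaces} assumes \eqref{Ker}, which is not part of the hypotheses of the present lemma). Fortunately neither the $L^\infty$ bound nor Lemma~\ref{Lp-spaces} is actually needed: since $\mathcal{W}_0\in L^\infty$ has compact support and $\omega\in L^2$, you get $\mathcal{W}_0\omega\in L^2$ directly; and $\omega\in H_0^1$ follows from the fact that $\omega$ lies in the operator domain of the Friedrichs extension of $\mathcal{L}$, hence in its form domain, which coincides (as a set) with the form domain of $\mathcal{H}$ because $\mathcal{W}_0$ is a bounded perturbation. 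With this replacement your proof is self-contained and non-circular.
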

Having recalled these notations and results from \cite{CDS}, we can turn to the proof of Theorem \ref{pi_heat}. In light of Lemma \ref{finite-dim}, under the assumptions of Theorem \ref{pi_heat} one can take a finite $L^2$-orthonormal basis $\left\{\omega_i\right\}_{i=1}^N$ of $\mathrm{Ker}_{L^2}(\mathcal{L})$. Then, the orthogonal projection $\Pi$ onto $\mathrm{Ker}_{L^2}(\mathcal{L})$ writes: 

$$\Pi=\sum_{i=1}^N \langle \omega_i ,\cdot\rangle \omega_i$$
where $\langle\cdot,\cdot\rangle$ is the $L^2$-scalar product.

To prove Theorem \ref{pi_heat}, we follow the strategy of the proof of \cite[Theorem 1.1]{CDS}. The crucial step will be to get resolvent estimates for the operator $\mathcal{L}+\alpha \Pi$, where $\alpha$ is a small constant to be determined later. The main technical result that will be used to get these resolvent estimates is the following:

\begin{Pro}\label{SpecRad}

There exists $\alpha>0$ such that, for all $\lambda\geq0$, and all $s\in [2,q']$, the operator $(I-(\mathcal{H}+\lambda)^{-1}(\mathcal{W}_0-\alpha\Pi))^{-1}$ is well-defined on $L^{s}$, and

$$\sup_{\lambda>0} ||(I-(\mathcal{H}+\lambda)^{-1}(\mathcal{W}_0-\alpha\Pi))^{-1}||_{s\to s}<+\infty.$$

\end{Pro}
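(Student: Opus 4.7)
The proof adapts the strategy of \cite{CDS}, which established the analogous spectral-radius bound for $(\mathcal{H}+\lambda)^{-1}\mathcal{W}_0$ under the stronger hypothesis $\mathrm{Ker}_{L^2}(\mathcal{L})=\{0\}$. Set $T^{(\alpha)}_\lambda := (\mathcal{H}+\lambda)^{-1}(\mathcal{W}_0-\alpha\Pi)$. The role of the compact perturbation $-\alpha\Pi$ is to push the eigenvalue $1$ that $\mathcal{H}^{-1}\mathcal{W}_0$ has on $\mathrm{Ker}_{L^2}(\mathcal{L})$ (by Lemma \ref{minig}) strictly below $1$. The plan is to produce $\alpha_0,\varepsilon>0$ so that $\sigma_{L^s}(T^{(\alpha)}_\lambda)\subset\{|\mu|\leq 1-\varepsilon\}$ uniformly in $\alpha\in(0,\alpha_0]$ and $\lambda\geq 0$, together with a uniform operator-norm bound on $T^{(\alpha)}_\lambda$ on $L^s$; the claimed uniform resolvent bound then follows by a standard Riesz-projection/Neumann argument.

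The preliminary step is compactness. By the arguments of \cite{CDS} (compact support of $\mathcal{W}_0$, Gaussian estimates for $e^{-t\mathcal{H}}$, and the volume condition $\kappa>8$), the operator $(\mathcal{H}+\lambda)^{-1}\mathcal{W}_0$ is compact on $L^s$ for every $s\in[2,q']$, and the family is equicompact in $\lambda$ on bounded intervals. Because $\Pi=\sum_i \langle\omega_i,\cdot\rangle\omega_i$ is finite-rank with $\omega_i\in L^p$ for every $p\in[q,\infty]$ (Lemma \ref{Lp-spaces}), $(\mathcal{H}+\lambda)^{-1}\Pi$ is finite-rank hence compact. Thus $T^{(\alpha)}_\lambda$ is compact on $L^s$, equicompactly for $(\alpha,\lambda)$ in bounded sets.

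The heart of the argument is the spectral analysis on $L^2$. The formal identity $T^{(\alpha)}_\lambda=(\mathcal{H}+\lambda)^{-1/2}S^{(\alpha)}_\lambda(\mathcal{H}+\lambda)^{1/2}$ with
\[
S^{(\alpha)}_\lambda := (\mathcal{H}+\lambda)^{-1/2}(\mathcal{W}_0-\alpha\Pi)(\mathcal{H}+\lambda)^{-1/2}
\]
shows that $\sigma_{L^2}(T^{(\alpha)}_\lambda)=\sigma_{L^2}(S^{(\alpha)}_\lambda)\subset\mathbb{R}$. The top eigenvalue is given by the Rayleigh quotient
\[
\mu^+_\lambda(\alpha)=\sup_{\phi\neq 0}\frac{\langle\mathcal{W}_0\phi,\phi\rangle-\alpha\|\Pi\phi\|_2^2}{Q_\mathcal{H}(\phi)+\lambda\|\phi\|_2^2},
\]
and the non-negativity $\mathcal{L}\geq 0$ (equivalent to $\langle\mathcal{W}_0\phi,\phi\rangle\leq Q_\mathcal{H}(\phi)$) gives $\mu^+_\lambda(\alpha)\leq 1$. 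Attainment of equality would force the three non-negative quantities $\langle\mathcal{L}\phi,\phi\rangle$, $\lambda\|\phi\|_2^2$, and $\alpha\|\Pi\phi\|_2^2$ to vanish simultaneously, i.e. $\phi\in\mathrm{Ker}_{L^2}(\mathcal{L})$, $\lambda=0$, and $\Pi\phi=0$ \textemdash{} impossible for $\alpha>0$ since $\mathrm{Ran}(\Pi)=\mathrm{Ker}_{L^2}(\mathcal{L})$. Hence $\mu^+_\lambda(\alpha)<1$ for each fixed $(\alpha,\lambda)$ with $\alpha>0$. The lower end of the spectrum is controlled by $\mu^-_\lambda(\alpha)\geq -\alpha\|(\mathcal{H}+\lambda)^{-1/2}\Pi(\mathcal{H}+\lambda)^{-1/2}\|$, uniformly finite since $\Pi$ has finite rank with range in $L^2$ and $(\mathcal{H}+\lambda)^{-1/2}$ is an isometry from $L^2$ onto $H_0^1$ (the latter embedding suitably because $\kappa>4$).

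To lift this pointwise bound to a uniform gap, argue by contradiction: if $\lambda_n\geq 0$ satisfied $\mu^+_{\lambda_n}(\alpha)\to 1$, then $\lambda_n$ must remain bounded (since $\mu^+_\lambda(\alpha)\to 0$ as $\lambda\to\infty$), and equicompactness of the family $\{T^{(\alpha)}_{\lambda_n}\}$ extracts from approximate maximizers a nonzero limit $\phi_*\in L^2$ which saturates the three non-negativity constraints, contradicting $\phi_*\in\mathrm{Ker}_{L^2}(\mathcal{L})$ with $\Pi\phi_*=0$. Finally, any $L^s$-eigenvector $\omega$ of $T^{(\alpha)}_\lambda$ with a non-zero eigenvalue is automatically in $L^2$: the equation $\mu\omega=(\mathcal{H}+\lambda)^{-1}(\mathcal{W}_0-\alpha\Pi)\omega$ expresses $\omega$ as $(\mathcal{H}+\lambda)^{-1}$ applied to the sum of a compactly supported $L^s$-section (which is in $L^2$ because $s\geq 2$) and a finite-rank contribution in $L^p$ for $p\in[q,\infty]$, and the Green-function decay of $\mathcal{H}$ under $\kappa>8$ places the result in $L^2$. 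Consequently $\sigma_{L^s}(T^{(\alpha)}_\lambda)\setminus\{0\}\subset[-C,1-\varepsilon]$ uniformly. Combined with a uniform operator-norm bound on $T^{(\alpha)}_\lambda|_{L^s}$ (inherited from the $L^\infty$-analysis of \cite{CDS} for the $\mathcal{W}_0$-part plus a trivial bound on the finite-rank piece), the desired uniform resolvent bound follows. The main obstacle is precisely the uniformity in $\lambda$ of the spectral gap: since $0$ is generally not isolated in $\sigma(\mathcal{L})$, pointwise strict spectral inequalities could a priori degenerate as $\lambda\downarrow 0$, and only the finite rank of $\Pi$ together with the equicompactness of the operator family rules this out.
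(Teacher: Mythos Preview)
Your outline follows the same architecture as the paper (symmetrize to $S^{(\alpha)}_\lambda$, control its $L^2$ spectrum, bootstrap $L^s$-eigenvectors back to $L^2$), but two steps are not justified and one of them is actually false as stated.

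\medskip

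\textbf{Gap 1: uniform spectral gap in $\lambda$.} Your compactness/contradiction argument for $\sup_\lambda\mu_\lambda^+(\alpha)<1$ is incomplete. From $\mu^+_{\lambda_n}(\alpha)\to 1$ with $\lambda_n\to\lambda_*$ you want to extract a limit $\phi_*$ saturating the constraints, but the approximate maximizers are merely bounded in $L^2$ (if you work with $S_\lambda^{(\alpha)}$) or in $H_0^1$ (if you work with the Rayleigh quotient), and neither space embeds compactly in the other on a non-compact $M$; ``equicompactness of $\{T^{(\alpha)}_{\lambda_n}\}$'' is not a substitute unless you also prove norm-continuity of $\lambda\mapsto S_\lambda^{(\alpha)}$, which you do not. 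The paper bypasses this entirely by the algebraic identity
\[
\mathcal{A}_\lambda=U_\lambda^*\,\mathcal{A}_0\,U_\lambda,\qquad U_\lambda=\mathcal{H}^{1/2}(\mathcal{H}+\lambda)^{-1/2},
\]
with $\|U_\lambda\|_{2\to 2}\leq 1$, so $\|\mathcal{A}_\lambda\|_{2\to 2}\leq \|\mathcal{A}_0\|_{2\to 2}$ and the whole uniformity reduces to the single case $\lambda=0$. This is the key trick you are missing. (A related issue: you ask for the gap to be uniform also in $\alpha\in(0,\alpha_0]$, but as $\alpha\downarrow 0$ the top eigenvalue of $S^{(\alpha)}_0$ tends to $1$ whenever $\mathrm{Ker}_{L^2}(\mathcal{L})\neq\{0\}$. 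One must fix a single $\alpha$ small enough for the lower bound and then prove the upper gap for that $\alpha$.)

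\medskip

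\textbf{Gap 2: from uniform spectral radius to uniform resolvent bound.} You conclude by saying that a uniform spectral-radius bound $r_s(\mathcal{B}_\lambda)\leq 1-\varepsilon$ together with a uniform operator-norm bound $\|\mathcal{B}_\lambda\|_{s\to s}\leq C$ gives $\sup_\lambda\|(I-\mathcal{B}_\lambda)^{-1}\|_{s\to s}<\infty$. This is \emph{false} for non-normal operators: for a compact $T$ one has $\|(I-T)^{-1}\|=\|\sum_{k\geq 0}T^k\|$, and $\|T^k\|^{1/k}\to r(T)$ gives no uniform rate over a family (think of growing Jordan blocks with fixed norm and spectral radius zero). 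The paper handles this by proving, in its Claim~1, that $\lambda\mapsto\mathcal{B}_\lambda\in\mathscr{L}(L^s)$ is \emph{continuous on $[0,\infty)$}, including at $\lambda=0$ (the delicate point, requiring Lemma~\ref{minig} and a dominated-convergence argument for $\lambda^{-1}[(\mathcal{H}+\lambda)^{-1}-\mathcal{H}^{-1}]+\mathcal{H}^{-2}$ acting after $\mathcal{W}_0$). Continuity plus the pointwise bound $r_s(\mathcal{B}_\lambda)\leq 1-\varepsilon$ then yields continuity of $\lambda\mapsto(I-\mathcal{B}_\lambda)^{-1}$, hence uniform boundedness on any compact $[0,\Lambda]$; large $\lambda$ is handled directly since $\|\mathcal{B}_\lambda\|_{s\to s}\leq \tfrac12$ there. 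You need to supply this continuity argument (or an equivalent one) to close the proof.
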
 
The proof of Proposition \ref{SpecRad} is similar in spirit to the one of \cite[Proposition 4.1]{CDS}. For $\lambda>0$, we define two operators:

$$\mathcal{A}_\lambda=(\mathcal{H}+\lambda)^{-1/2}(\mathcal{W}_0-\alpha\Pi)(\mathcal{H}+\lambda)^{-1/2},$$
and

$$\mathcal{B}_\lambda=(\mathcal{H}+\lambda)^{-1}(\mathcal{W}_0-\alpha\Pi).$$
Note that the definition of $\mathcal{A}_\lambda$ differs slightly from the one in \cite[Section 4]{CDS}, and is more adapted to our present purposes. In all the proof, $s$ will denote a real number that belongs to the interval $(2,q']$. First, we present a lemma that will be used several times in the course of the proof of Proposition \ref{SpecRad}:

\begin{Lem}\label{Halph}

Let $\psi$ be a function in $L^\infty$, compactly supported in $K\Subset M$, $\alpha>0$, and assume that for some (all) $y_0\in K$, and some $\infty\geq r\geq s\geq 1$

$$\int_{1}^{+\infty} \frac{t^{\alpha-1}}{V(y_0,\sqrt{t})^{1-\frac{1}{s}}}\, dt< {+\infty}.$$
Denote by $m_\psi$ the operator of multiplication by $\psi$. Then,

$$\sup_{\lambda\geq 0} ||(\mathcal{H}+\lambda)^{-\alpha}m_\psi ||_{r\to s}<\infty.$$
In particular, if \eqref{vol2} is satisfied with $p=s$, then

$$\sup_{\lambda\geq0} ||(\mathcal{H}+\lambda)^{-1}\mathcal{W}_0||_{s\to s}<\infty.$$

\end{Lem}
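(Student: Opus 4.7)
The plan is to represent the operator via the subordination formula
$$(\mathcal{H}+\lambda)^{-\alpha} = \frac{1}{\Gamma(\alpha)} \int_0^\infty t^{\alpha-1} e^{-\lambda t} e^{-t\mathcal{H}} \, dt,$$
and to estimate $\|e^{-t\mathcal{H}} m_\psi\|_{r \to s}$ separately for small and large $t$, using the Gaussian upper estimates for the heat kernel of $\mathcal{H}$ established in \cite{CDS}. Since $e^{-\lambda t}\le 1$, a uniform-in-$\lambda$ bound will then follow provided the resulting $t$-integrand is integrable against $t^{\alpha-1}$ on $(0,\infty)$.

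For $t \le 1$, I would note that since $\psi$ is bounded and supported in the compact set $K$, and $r \ge s$, H\"older's inequality on $K$ gives $\|m_\psi f\|_s \le \|\psi\|_\infty \mu(K)^{1/s - 1/r} \|f\|_r$, so $m_\psi: L^r \to L^s$ is bounded. Combined with the uniform $L^s$-boundedness of $e^{-t\mathcal{H}}$ (a standard consequence of the pointwise Gaussian upper bound under \eqref{d}, obtained via Schur's test on the kernel followed by Riesz--Thorin), this yields $\|e^{-t\mathcal{H}} m_\psi\|_{r \to s} \le C$ uniformly for $t \in (0,1]$. Since $\alpha > 0$, $\int_0^1 t^{\alpha-1} \, dt < \infty$, and this part of the integral is controlled.

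For $t \ge 1$, I would apply Minkowski's integral inequality to
$$(e^{-t\mathcal{H}} m_\psi f)(x) = \int_K p_t^\mathcal{H}(x,y)\,\psi(y) f(y) \, d\mu(y),$$
giving
$$\|e^{-t\mathcal{H}} m_\psi f\|_s \le \|\psi\|_\infty \int_K |f(y)|_y\, \|p_t^\mathcal{H}(\cdot, y)\|_{L^s(E)} \, d\mu(y).$$
The Gaussian upper bound on $\|p_t^\mathcal{H}(x,y)\|_{y,x}$ together with volume doubling yields the standard slice estimate $\|p_t^\mathcal{H}(\cdot, y)\|_s \lesssim V(y, \sqrt{t})^{-(1-1/s)}$ (a direct integration of the Gaussian tail against the doubling bound); and doubling gives $V(y, \sqrt{t}) \simeq V(y_0, \sqrt{t})$ uniformly in $y \in K$, $t \ge 1$. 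H\"older in $K$ then bounds $\int_K |f|\,d\mu \le \mu(K)^{1-1/r} \|f\|_r$, so
$$\|e^{-t\mathcal{H}} m_\psi f\|_s \lesssim V(y_0, \sqrt{t})^{-(1-1/s)} \|f\|_r.$$
The hypothesis $\int_1^\infty t^{\alpha-1} V(y_0, \sqrt{t})^{-(1-1/s)} \, dt < \infty$ closes the estimate, uniformly in $\lambda \ge 0$.

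For the ``in particular'' statement, take $\alpha = 1$, $r = s$, and $\psi(x) = \|\mathcal{W}_0(x)\|_x$, which is bounded and supported in $K_0$. Since $|(\mathcal{W}_0 f)(x)|_x \le \|\mathcal{W}_0(x)\|_x |f(x)|_x$ pointwise, the bound on $(\mathcal{H}+\lambda)^{-1} m_\psi$ dominates that on $(\mathcal{H}+\lambda)^{-1} \mathcal{W}_0$; condition \eqref{vol2} with $p = s$ is precisely the required integrability. The only genuinely technical point is the slice estimate $\|p_t^\mathcal{H}(\cdot, y)\|_s \lesssim V(y, \sqrt{t})^{-(1-1/s)}$, but this is a routine doubling computation from the Gaussian tail (with care taken that the Hilbert--Schmidt norm on $\mathrm{End}(E_y,E_x)$ dominates the operator norm used to act on sections); the rest --- Minkowski, H\"older on the compactly supported $\psi$, and uniformity in $\lambda$ via $e^{-\lambda t} \le 1$ --- is standard.
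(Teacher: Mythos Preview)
Your proof is correct and follows essentially the same route as the paper's: the same subordination formula, the same split $\int_0^1+\int_1^\infty$, H\"older on the compact support of $\psi$ for small $t$, and an $L^1(K)\to L^s$ bound of order $V(y_0,\sqrt{t})^{-(1-1/s)}$ for large $t$. The only cosmetic difference is that the paper cites \cite{BCS} for the $L^1\to L^s$ mapping bound of $e^{-t\mathcal{H}}$, whereas you obtain the equivalent slice estimate $\|p_t^{\mathcal{H}}(\cdot,y)\|_s\lesssim V(y,\sqrt{t})^{-(1-1/s)}$ directly via Minkowski and the pointwise Gaussian bound; your treatment of the ``in particular'' clause via pointwise domination $|\mathcal{W}_0 f|\le \|\mathcal{W}_0\|\,|f|$ is also slightly more explicit than the paper's.
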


\begin{proof}

Write
$$(\mathcal{H}+\lambda)^{-\alpha}m_\psi=\int_0^{+\infty} e^{-t\lambda} e^{-t\mathcal{H}} m_\psi\,t^{\alpha-1}dt.$$
We split the integral into $\int_0^1+\int_1^{+\infty}$, and estimate both terms. By H\"older, the integral $\int_0^1$ is estimated by

$$\mathrm{Vol(K)}^{\frac{rs}{r-s}}||\psi||_\infty\int_0^1 ||e^{-t\mathcal{H}}||_{s\to s} \,t^{\alpha-1}dt.$$
Since $e^{-t\mathcal{H}}$ has Gaussian estimates, 

$$\sup_{t\geq0}||e^{-t\mathcal{H}}||_{s\to s}<+\infty,$$
and it follows that the integral $\int_0^1$ is bounded by a finite constant, independent of $\lambda$. Concerning the integral $\int_1^\infty$, it is bounded by

$$\mathrm{Vol(K)}^{\frac{r}{r-1}}\int_1^{+\infty} \|e^{-t\mathcal{H}}\|_{L^1(K)\to L^{s}}||\psi||_\infty \,t^{\alpha-1}dt .$$
Recall that according to \cite{BCS}, since the heat kernel of $e^{-t\mathcal{H}}$ has Gaussian estimates, 

$$\sup_{t>0}||e^{-t\mathcal{H}}V_{\sqrt{t}}^{1-\frac{1}{s}}||_{1\to s}<+\infty.$$
It follows that

$$||e^{-t\mathcal{H}}||_{L^1(K)\to L^s} \leq \frac{C}{V(y_0,\sqrt{t})^{1-\frac{1}{s}}},$$
where $y_0\in K$ is arbitrary. Then,
$$\int_{1}^{+\infty} \|e^{-t\mathcal{H}}\|_{L^1(K)\to L^{s}}\,t^{\alpha-1}dt \leq C_{K} \int_{1}^{+\infty} \frac{t^{\alpha-1}}{V(y_0,\sqrt{t})^{1-\frac{1}{s}}}\, dt< {+\infty}. $$
Hence, the integral $\int_1^\infty$ is also bounded by a finite constant, independent of $\lambda$.

\end{proof}
We now turn to the proof of Proposition \ref{SpecRad}, which will be divided into a sequence of claims.\\

\vskip5mm

{\em {\em CLAIM 1:} for every $s\in [2,q']$ and $\lambda\geq0$, the operator $\mathcal{B}_\lambda$ is compact on $L^s$, the map $\lambda\mapsto \mathcal{B}_\lambda\in \mathscr{L}(L^s,L^s)$ is continuous, and}

$$\sup_{\lambda\geq 0}||\mathcal{B}_\lambda||_{s\to s}<+\infty,\,\,\sup_{\lambda\geq 0}||\mathcal{B}_\lambda||_{s\to 2}<+\infty.$$

\bigskip

{\em Proof of Claim 1:}  we write

$$\mathcal{B}_\lambda=(\mathcal{H}+\lambda)^{-1}\mathcal{W}_0-\alpha (\mathcal{H}+\lambda)^{-1}\Pi.$$
The fact that $\kappa>4$ implies that for some $\varepsilon>0$, \eqref{vol2} holds for any $p\in (\frac{1}{2}-\varepsilon,\infty]$. Hence, according to Lemma \ref{Halph}

$$\sup_{\lambda\geq 0}||(\mathcal{H}+\lambda)^{-1}\mathcal{W}_0||_{s\to s}<+\infty,\,\,\sup_{\lambda\geq 0}||(\mathcal{H}+\lambda)^{-1}\mathcal{W}_0||_{s\to 2}<+\infty,$$
and

$$\sup_{\lambda\geq0}||(\mathcal{H}+\lambda)^{-1}\mathcal{W}_0||_{2-\varepsilon\to 2-\varepsilon}<\infty.$$
It was proved in \cite[Lemma 4.3]{CDS} that for every $\lambda\geq0$, the operator $(\mathcal{H}+\lambda)^{-1}\mathcal{W}_0$ is compact on $L^\infty$, and that the map $\lambda\mapsto (\mathcal{H}+\lambda)^{-1}\mathcal{W}_0\in \mathcal{L}(L^\infty,L^\infty)$ is continuous. By an interpolation argument (see \cite[Theorem 1.6.1]{Da} for the compactness part), for every $\lambda\geq0$, the operator $(\mathcal{H}+\lambda)^{-1}\mathcal{W}_0$ is compact on $L^s$, and moreover the map $\lambda\mapsto (\mathcal{H}+\lambda)^{-1}\mathcal{W}_0\in \mathscr{L}(L^s,L^s)$ is continuous.

We now turn to the term $(\mathcal{H}+\lambda)^{-1}\Pi$. This is a finite rank operator, so its compactness follows from its boundedness. By Lemma \ref{Lp-spaces}, 

$$\omega_i\in L^{s'}\cap L^s,\quad \forall i=1,\ldots,N.$$
The fact that $\omega_i\in L^{s'}$ implies that $\langle \omega_i,\cdot\rangle$ is a bounded linear form on $L^s$. In order to prove that

$$\sup_{\lambda\geq0}||(\mathcal{H}+\lambda)^{-1}\Pi||_{s\to s}<+\infty,\,\,\sup_{\lambda\geq0}||(\mathcal{H}+\lambda)^{-1}\Pi||_{s\to 2}<+\infty,$$
it remains to prove that

\begin{equation}\label{E1}
\sup_{\lambda\geq0} \left(||(\mathcal{H}+\lambda)^{-1}\omega_i||_s+||(\mathcal{H}+\lambda)^{-1}\omega_i||_2\right)<+\infty,\quad \forall i=1,\ldots,N.
\end{equation}
Note that for $\lambda>0$,

$$\mathcal{H}(\mathcal{H}+\lambda)^{-1}=I-\lambda(\mathcal{H}+\lambda)^{-1}.$$
Given that for every $p\in [1,\infty]$, $\sup_{t\geq0}||e^{-t\mathcal{H}}||_{p,p}<+\infty$ (which follows from the Gaussian estimates of $e^{-t\mathcal{H}}$), and using the formula

$$(\mathcal{H}+\lambda)^{-1}=\int_0^\infty e^{-\lambda t}e^{-t\mathcal{H}}\,dt,$$ 
one sees easily that

$$\sup_{\lambda\geq 0}\left(\lambda ||(\mathcal{H}+\lambda)^{-1}||_{s\to s}+\lambda ||(\mathcal{H}+\lambda)^{-1}||_{2\to 2}\right)<+\infty.$$
Therefore,

$$\sup_{\lambda\geq 0}\left(||\mathcal{H}(\mathcal{H}+\lambda)^{-1}||_{s\to s}+||\mathcal{H}(\mathcal{H}+\lambda)^{-1}||_{2\to 2}\right)<+\infty.$$
Consequently, writing $(\mathcal{H}+\lambda)^{-1}=\mathcal{H}(\mathcal{H}+\lambda)^{-1}\mathcal{H}^{-1}$, one sees that \eqref{E1} is equivalent to 

\begin{equation}\label{E2}
\mathcal{H}^{-1}\omega_i\in L^2\cap L^s,\quad \forall i=1,\ldots,N.
\end{equation}
According to Lemma \ref{minig},

$$\mathcal{H}^{-1}\omega_i=-\mathcal{H}^{-2}\mathcal{W}_0\omega_i.$$
By Lemma \ref{Halph}, the operator $\mathcal{H}^{-2}\mathcal{W}_0$ is bounded on $L^s$ and on $L^2$, provided that

$$\int_1^\infty \frac{t}{V(x_0,\sqrt{t})^{1/2}}\,dt<+\infty.$$
Since $\kappa>8$, the above integral is finite; consequently, \eqref{E2} is proved, and one has shown that

$$\sup_{\lambda\geq0}||(\mathcal{H}+\lambda)^{-1}\Pi||_{s\to s}<+\infty,\,\,\sup_{\lambda\geq0}||(\mathcal{H}+\lambda)^{-1}\Pi||_{s\to 2}<+\infty.$$
We finally prove that the map $\lambda\mapsto (\mathcal{H}+\lambda)^{-1}\Pi\in \mathscr{L}(L^s,L^s)$ is continuous. Clearly, one needs to prove that for all $i=1,\ldots,N$, the map $\lambda\mapsto (\mathcal{H}+\lambda)^{-1}\omega_i \in L^s$ is continuous. Continuity at $\lambda_0>0$ is easy enough: one starts with writing

\begin{equation}\label{E3}
(\mathcal{H}+\lambda)^{-1}-(\mathcal{H}+\lambda_0)^{-1}=(\lambda-\lambda_0)(\mathcal{H}+\lambda)^{-1}(\mathcal{H}+\lambda_0)^{-1}.
\end{equation}
As was already used above, the fact that $e^{-t\mathcal{H}}$ has Gaussian estimates implies that 

$$\sup_{\lambda\geq0}\lambda ||(\mathcal{H}+\lambda)^{-1}||_{s\to s}<+\infty.$$
Therefore, there is a constant $C(\lambda_0)>0$ such that, for all $\lambda\geq\frac{\lambda_0}{2}$,

$$||(\mathcal{H}+\lambda)^{-1}(\mathcal{H}+\lambda_0)^{-1}||_{s\to s}\leq C(\lambda_0).$$
Then, \eqref{E3} implies continuity of the map $\lambda\mapsto (\mathcal{H}+\lambda)^{-1}\Pi\in \mathscr{L}(L^s,L^s)$ at $\lambda_0>0$. For the continuity at $0$, write that $\omega_i=-\mathcal{H}^{-1}\mathcal{W}_0\omega_i$, so

$$(\mathcal{H}+\lambda)^{-1}\omega_i=-(\mathcal{H}+\lambda)^{-1}\mathcal{H}^{-1}\mathcal{W}_0\omega_i.$$
Thus,

$$(\mathcal{H}+\lambda)^{-1}\omega_i-\mathcal{H}^{-1}\omega_i=\left(\lambda^{-1}[(\mathcal{H}+\lambda)^{-1}-\mathcal{H}^{-1}]+\mathcal{H}^{-2}\right)\mathcal{W}_0\omega_i.$$
We claim that 

$$\lim_{\lambda\to 0}||\left(\lambda^{-1}[(\mathcal{H}+\lambda)^{-1}-\mathcal{H}^{-1}]+\mathcal{H}^{-2}\right)\mathcal{W}_0||_{s\to s}=0.$$
To prove this, we write

$$\left(\lambda^{-1}[(\mathcal{H}+\lambda)^{-1}-\mathcal{H}^{-1}]+\mathcal{H}^{-2}\right)\mathcal{W}_0=\int_0^\infty \left(\frac{e^{-\lambda t}-1}{\lambda}+t\right)e^{-t\mathcal{H}}\mathcal{W}_0\,dt.$$
One has

$$\left|\frac{e^{-\lambda t}-1}{\lambda}+t\right|\leq 2t,\quad \forall \lambda>0,\,t>0.$$
According to the proof of Lemma \ref{Halph}, the fact that $s\geq 2$ and the condition $\kappa>8$ imply that

$$\int_0^\infty ||e^{-t\mathcal{H}}\mathcal{W}_0||_{s\to s}\,\,t\,dt<\infty.$$
Clearly, for every $t>0$,

$$\lim_{\lambda\to 0}\frac{e^{-\lambda t}-1}{\lambda}+t=0,$$ so the dominated convergence theorem implies that

$$\lim_{\lambda\to 0}||\left(\lambda^{-1}[(\mathcal{H}+\lambda)^{-1}-\mathcal{H}^{-1}]+\mathcal{H}^{-2}\right)\mathcal{W}_0||_{s\to s}=0.$$
Hence, the continuity at $\lambda=0$ is proved.
$\Box$

\vskip5mm

{\em {\em CLAIM 2:} For every $\lambda\geq0$, the operator $\mathcal{A}_\lambda=(\mathcal{H}+\lambda)^{-1/2}(\mathcal{W}_0-\alpha\Pi)(\mathcal{H}+\lambda)^{-1/2}$ is self-adjoint, compact on $L^2$.}\\

{\em Proof of Claim 2:} According to \cite[Lemma 4.5]{CDS}, the operator $(\mathcal{H}+\lambda)^{-1/2}\mathcal{W}_0(\mathcal{H}+\lambda)^{-1/2}$ is self-adjoint, compact on $L^2$. Thus, it is enough to show that the operator $(\mathcal{H}+\lambda)^{-1/2}\Pi(\mathcal{H}+\lambda)^{-1/2}$ is self-adjoint, compact on $L^2$. Since $\Pi$ is a projection, $\Pi^2=\Pi$, so

$$(\mathcal{H}+\lambda)^{-1/2}\Pi^2(\mathcal{H}+\lambda)^{-1/2}.$$
The operator $\Pi (\mathcal{H}+\lambda)^{-1/2}$ having finite rank, its compactness follows from its boundedness, and therefore it is enough to see that the two operators $\Pi (\mathcal{H}+\lambda)^{-1/2}$ and $ (\mathcal{H}+\lambda)^{-1/2}\Pi$ are bounded on $L^2$ and adjoint one to another. By an argument similar to the proof of Claim 1, using Lemma \ref{Halph} one gets that for all $\omega\in \mathrm{Ker}_{L^2}(\mathcal{L})$,

$$\sup_{\lambda\geq0} ||(\mathcal{H}+\lambda)^{-1/2}\omega||_2<+\infty,$$
provided

$$\int_1^\infty \frac{\sqrt{t}}{V(y_0,\sqrt{t})^{1/2}}\,dt<+\infty.$$
Since $\kappa>8$, the above integral is finite. Thus, the operator $(\mathcal{H}+\lambda)^{-1/2}\Pi$ is bounded on $L^2$. Concerning the operator $\Pi (\mathcal{H}+\lambda)^{-1/2}$, one has, for all $\varphi\in C_0^\infty(E)$,

$$\Pi(\mathcal{H}+\lambda)^{-1/2}\varphi=\sum_{i=1}^N\langle \omega_i,(\mathcal{H}+\lambda)^{-1/2}\varphi\rangle \omega_i.$$
Since $\varphi$ and $\omega_i$, $i=1,\ldots,N$ belong to $L^2\cap H_0^1$, \cite[Lemma 3.1]{D0} implies that

$$\langle \omega_i,(\mathcal{H}+\lambda)^{-1/2}\varphi\rangle=\langle (\mathcal{H}+\lambda)^{-1/2}\omega_i,\varphi\rangle.$$
Note that \cite[Lemma 3.1]{D0} is written for {\em scalar} Schr\"odinger-type operators, however the arguments adapt easily to treat operators such as $\mathcal{H}$, if one uses Kato's inequality

$$|\nabla |\eta\|\leq |\nabla \eta|,\quad a.e.,\,\forall \eta\in C_0^\infty(E).$$  
Since $(\mathcal{H}+\lambda)^{-1/2}\omega_i\in L^2$, for all $i=1,\ldots,N$, one gets by density of $C_0^\infty(E)$ in $L^2(E)$ that the operator $\Pi(\mathcal{H}+\lambda)^{-1/2}$ is bounded on $L^2$, and that for every $\varphi\in L^2(E)$, 

$$\Pi(\mathcal{H}+\lambda)^{-1/2}\varphi=\sum_{i=1}^N\langle (\mathcal{H}+\lambda)^{-1/2}\omega_i,\varphi\rangle \omega_i.$$
Now, if $\psi\in L^2(E)$,

$$\begin{array}{rcl}
\langle \Pi(\mathcal{H}+\lambda)^{-1/2}\varphi,\psi\rangle&=&\sum_{i=1}^N\langle (\mathcal{H}+\lambda)^{-1/2}\omega_i,\varphi\rangle \langle \omega_i,\psi\rangle\\\\
&=&\langle \varphi,(\mathcal{H}+\lambda)^{-1/2}\Pi \psi\rangle.
\end{array}$$
This shows that the operators $\Pi (\mathcal{H}+\lambda)^{-1/2}$ and $ (\mathcal{H}+\lambda)^{-1/2}\Pi$ are adjoint one to another, which concludes the proof of Claim 2.

$\Box$

\vskip5mm

{\em {\em CLAIM 3:} If $\alpha>0$ is chosen small enough, there exists $\eta\in (0,1)$ such that, for all $\lambda\geq0$,

$$||\mathcal{A}_\lambda||_{2\to 2}\leq 1-\eta.$$
}\\

{\em Proof of Claim 3:}
Write

$$\mathcal{A}_\lambda=U_\lambda^*(\mathcal{H}^{-1/2}(\mathcal{W}_0-\alpha\Pi)\mathcal{H}^{-1/2})U_\lambda,$$
where $U_\lambda=\mathcal{H}^{1/2}(\mathcal{H}+\lambda)^{-1/2}$. Notice that, since

$$\sqrt{\frac{x}{x+\lambda}}\leq 1,\quad \forall x\geq 0,\,\forall \lambda\geq0,$$
by the Spectral Theorem the operator $U_\lambda$ is contractive on $L^2$. Therefore, it is enough to prove that for some $\eta\in (0,1)$ and $\alpha>0$ small enough,

$$||\mathcal{H}^{-1/2}(\mathcal{W}_0-\alpha\Pi)\mathcal{H}^{-1/2}||_{2\to 2}\leq 1-\eta.$$
By Claim 2, the operator $\mathcal{A}_0=\mathcal{H}^{-1/2}(\mathcal{W}_0-\alpha\Pi)\mathcal{H}^{-1/2}$ is self-adjoint and compact on $L^2$, therefore

$$||\mathcal{A}_0||_{2\to 2}=\max_{||\varphi||_2=1}|\langle \mathcal{A}_0\varphi,\varphi\rangle|.$$
But

$$\begin{array}{rcl}
\langle \mathcal{A}_0\varphi,\varphi\rangle&=&\langle \mathcal{H}^{-1/2}\mathcal{W}_0\mathcal{H}^{-1/2}\varphi,\varphi\rangle-\alpha \langle \mathcal{H}^{-1/2}\Pi\mathcal{H}^{-1/2}\varphi,\varphi\rangle\\\\
&=&\langle \mathcal{H}^{-1/2}\mathcal{W}_0\mathcal{H}^{-1/2}\varphi,\varphi\rangle-\alpha ||\Pi\mathcal{H}^{-1/2}\varphi||_2^2,
\end{array}$$
where in the last equality we have used that 

$$\begin{array}{rcl}
\mathcal{H}^{-1/2}\Pi\mathcal{H}^{-1/2}&=&\mathcal{H}^{-1/2}\Pi^2\mathcal{H}^{-1/2}\\\\
&=& (\Pi\mathcal{H}^{-1/2})^*\Pi\mathcal{H}^{-1/2}
\end{array}$$
(see the proof of Claim 2). Since $\mathcal{W}_0$ is non-negative, the operator $\mathcal{H}^{-1/2}\mathcal{W}_0\mathcal{H}^{-1/2}$ is non-negative, and therefore, for $\varphi\in L^2(E)$,

$$\langle \mathcal{A}_0\varphi,\varphi\rangle\geq -\mu ||\Pi\mathcal{H}^{-1/2}\varphi||_2^2\geq -\alpha ||\Pi\mathcal{H}^{-1/2}||_{2\to 2}^2||\varphi||_2^2 .$$
Take 

$$\alpha:=\frac{1}{2}(1+||\Pi\mathcal{H}^{-1/2}||_{2\to 2}^2)^{-1},$$
then one gets that for every $\varphi\in L^2(E)$ such that $||\varphi||_2=1$,

$$\langle \mathcal{A}_0\varphi,\varphi\rangle\geq -\frac{1}{2}.$$
Thus, in order to conclude the proof, it is enough to see that with the above choice for $\alpha$, there exists $\eta\in (0,1)$ such that, for all $\varphi\in L^2(E)$,

$$\langle \mathcal{H}^{-1/2}\mathcal{W}_0\mathcal{H}^{-1/2}\varphi,\varphi\rangle-\alpha ||\Pi\mathcal{H}^{-1/2}\varphi||_2^2\leq (1-\eta)||\varphi||_2^2.$$
Let $\varphi_0\in L^2(E)\setminus \{0\}$, $||\varphi_0||_2=1$, such that

$$\langle \mathcal{A}_0\varphi_0,\varphi_0\rangle=\max_{||\varphi||_2=1}\langle \mathcal{A}_0\varphi,\varphi\rangle.$$
Such a $\varphi_0$ exists by compactness of $\mathcal{A}_0$. Now, for any $\varphi\in L^2(E)$, if one lets $\psi=\mathcal{H}^{-1/2}\varphi\in H_0^1$, then

$$\begin{array}{rcl}
\langle \mathcal{H}^{-1/2}\mathcal{W}_0\mathcal{H}^{-1/2}\varphi,\varphi\rangle \leq \delta \langle\varphi,\varphi\rangle &\Leftrightarrow & \langle \mathcal{H}^{-1/2}\mathcal{W}_0 \psi,\mathcal{H}^{1/2}\psi\rangle \leq \delta \langle \mathcal{H}^{1/2}\psi,\mathcal{H}^{1/2}\psi\rangle\\\\
&\Leftrightarrow& \langle \mathcal{W}_0\psi,\psi\rangle\leq \delta q_{\mathcal{H}}(\psi).
\end{array}$$
The latter inequality is true with $\delta=1$ since $\mathcal{L}=\mathcal{H}-\mathcal{W}_0$ is non-negative. Since $\mathcal{H}^{-1/2}\mathcal{W}_0\mathcal{H}^{-1/2}$ is self-adjoint, this implies that $||\mathcal{H}^{-1/2}\mathcal{W}_0\mathcal{H}^{-1/2}||_{2\to 2}\leq 1$.  The operator $\mathcal{H}^{-1/2}\mathcal{W}_0\mathcal{H}^{-1/2}$ being self-adjoint and compact on $L^2$ by \cite[Lemma 4.5]{CDS}, either there is $\eta\in (0,1)$ such that

$$\langle \mathcal{H}^{-1/2}\mathcal{W}_0\mathcal{H}^{-1/2}\varphi_0,\varphi_0\rangle \leq (1-\eta),$$
or 

$$(I-\mathcal{H}^{-1/2}\mathcal{W}_0\mathcal{H}^{-1/2})\varphi_0=0.$$
In the former case, the result of Claim 3 follows, since

$$\langle \mathcal{A}_0\varphi_0,\varphi_0\rangle\leq \langle \mathcal{H}^{-1/2}\mathcal{W}_0\mathcal{H}^{-1/2}\varphi_0,\varphi_0\rangle.$$
In the latter case, one recalls from \cite[Lemma 1]{D2} that $\mathcal{H}^{-1/2}$ is an isometry from $\mathrm{Ker}_{L^2}(I-\mathcal{H}^{-1/2}\mathcal{W}_0\mathcal{H}^{-1/2})$ to $\mathrm{Ker}_{H_0^1}(\mathcal{L})$; let us limit ourselves here to mentioning that this relies on the formula:

$$\mathcal{L}=\mathcal{H}^{1/2}(I-\mathcal{H}^{-1/2}\mathcal{W}_0\mathcal{H}^{-1/2})\mathcal{H}^{1/2}.$$
Therefore, if one lets $\psi_0:=\mathcal{H}^{-1/2}\varphi_0$, then $\psi_0\in \mathrm{Ker}_{H_0^1}(\mathcal{L})=\mathrm{Ker}_{L^2}(\mathcal{L})$ (Lemma \ref{Lp-spaces}), so $\Pi\psi_0=\psi_0$, and

$$\begin{array}{rcl}
\langle \mathcal{A}_0\varphi_0,\varphi_0\rangle &=& 1-\alpha ||\Pi \psi_0||_2^2\\\\
&=& 1-\alpha ||\psi_0||_2^2.
\end{array}$$
In this case, one lets 

$$\eta=\min(\frac{1}{2},\alpha ||\psi_0||_2^2)\in (0,1),$$
then

$$\langle \mathcal{A}_0\varphi_0,\varphi_0\rangle \leq 1-\eta,$$
which implies by definition of $\varphi_0$ that

$$\max_{||\varphi||_2=1}\langle \mathcal{A}_0\varphi,\varphi\rangle \leq 1-\eta.$$
This concludes the proof of Claim 3.

$\Box$

\vskip5mm

{\em {\em CLAIM 4:} For every $\lambda\geq0$, the spectral radius $r_s$ of $\mathcal{B}_\lambda$ on $L^s$ satisfies

$$r_s\leq 1-\eta.$$
}\\

{\em Proof of Claim 4:}
Since $\mathcal{B}_\lambda$ is compact on $L^s$, there exists $\varphi\in L^s\setminus \{0\}$ such that

$$\mathcal{B}_\lambda \varphi=\gamma \varphi,\quad |\gamma|=r_s.$$
That is,

$$(\mathcal{H}+\lambda)^{-1}(\mathcal{W}_0-\alpha\Pi)\varphi=\gamma \varphi.$$
According to Claim 1, $\mathcal{B}_\lambda$ is bounded from $L^s$ to $L^2$, therefore $\varphi\in L^2$. Next,

$$\gamma (\mathcal{H}+\lambda)^{1/2}\varphi=(\mathcal{H}+\lambda)^{-1/2}(\mathcal{W}_0-\alpha\Pi)\varphi.$$
It has been proved in the course of the proof of Claim 2 that the operator $(\mathcal{H}+\lambda)^{-1/2}\Pi$ is bounded on $L^2$, and according to \cite[Lemma 4.5]{CDS}, the operator $(\mathcal{H}+\lambda)^{-1/2}\mathcal{W}_0$ is bounded on $L^2$. Therefore, $(\mathcal{H}+\lambda)^{1/2}\varphi\in L^2$. Then, one writes

$$\begin{array}{rcl}
\gamma (\mathcal{H}+\lambda)^{1/2}\varphi&=&\left((\mathcal{H}+\lambda)^{-1/2}(\mathcal{W}_0-\alpha\Pi)(\mathcal{H}+\lambda)^{-1/2}\right)(\mathcal{H}+\lambda)^{1/2}\varphi\\\\
&=& \mathcal{A}_\lambda (\mathcal{H}+\lambda)^{1/2}\varphi.
\end{array}$$
Letting $\psi=(\mathcal{H}+\lambda)^{1/2}\varphi\in L^2$, one gets

$$\gamma \psi=\mathcal{A}_\lambda\psi.$$
By Claim 3, one obtains $r_s=|\gamma|\leq 1-\eta$.

$\Box$

\vskip5mm

\noindent{\em Proof of Proposition $\ref{SpecRad}$:} The proof now follows along the lines of \cite[Proposition 4.1]{CDS}. For the reader's convenience, we write the details. One needs to show that

\begin{equation}\label{Neumann}
\sup_{\lambda\geq0}||(I-\mathcal{B}_\lambda)^{-1}||_{s\to s}<+\infty.
\end{equation}
By Claim 4, for every $\lambda\geq0$, the spectral radius of $\mathcal{B}_\lambda$ on $L^s$ is less than $1-\eta$. In particular, $1$ does not belong to the $L^s$-spectrum of $\mathcal{B}_\lambda$, so $(I-\mathcal{B}_\lambda)^{-1}$ is well-defined and is a bounded operator on $L^s$. Since $e^{-t\mathcal{H}}$ is uniformly bounded on $L^s$, it follows from the formula

$$(\mathcal{H}+\lambda)^{-1}=\int_0^\infty e^{-t\lambda}e^{-tH}\,dt$$
that

$$\sup_{\lambda>0}\lambda\|(\mathcal{H}+\lambda)^{-1}\|_{s\to s}<+\infty.$$
Since $\mathcal{W}_0$ is bounded, there exists $\Lambda>0$ such that for every $\lambda\geq\Lambda$,
\begin{equation}\label{largelambda}
\|\mathcal{B}_{\lambda}\|_{s\to s}=\|(\mathcal{H}+\lambda)^{-1}\mathcal{W}_0\|_{s\to s}\leq \frac{1}{2}.
\end{equation}
It follows that for every $\lambda\geq\Lambda$,
$$\|(I-\mathcal{B}_\lambda)^{-1}\|_{s\to s}\leq 2.$$
\bigskip
It remains to show that $\|(I-\mathcal{B}_\lambda)^{-1}\|_{s\to s}$ is uniformly bounded for $\lambda\in [0,\Lambda]$. For this, it is enough to prove that $\lambda\mapsto (I-\mathcal{B}_\lambda)^{-1}\in \mathscr{L}(L^s,L^s)$ is continuous on $[0,\infty)$. 

Let $\lambda_0\geq 0$. By Claim 4,
$$\lim_{n\to +\infty}\|\mathcal{B}_{\lambda_0}^n\|_{s\to s}^{1/n}\leq 1-\eta,$$
so there exists $N$ such that $\|\mathcal{B}_{\lambda_0}^N\|_{s\to s}^{1/N}\leq 1-\frac{\eta}{2}$. Hence for some $\delta\in (0,1)$,

$$\|\mathcal{B}_{\lambda_0}^N\|_{s\to s}\leq 1-\delta.$$
Now we know by Claim 1 that the map $\lambda\mapsto \mathcal{B}_\lambda\in \mathscr{L}(L^s,L^s)$ is continuous on $[0,\infty)$.
Therefore, for $\lambda$ close enough to $\lambda_0$, one has
$$\|\mathcal{B}_{\lambda}^N\|_{s\to s}\leq 1-\frac{\delta}{2}.$$
By Claim 1, there is a constant $C\geq 1$ such that for every $\lambda\geq 0$,
$$\|\mathcal{B}_\lambda\|_{s\to s}\leq C.$$
Thus, for every $n\geq N$ and $\lambda$ close  to $\lambda_0$,
$$\|\mathcal{B}_\lambda^n\|_{s\to s}\leq C^N \left(1-\frac{\delta}{2}\right)^{[n/N]}.$$
Thus, the series $\sum_{n\geq0}\|\mathcal{B}_\lambda^n\|_{s\to s}$ converges uniformly for $\lambda$ close  to $\lambda_0$, and since $\lambda\mapsto \mathcal{B}_\lambda\in \mathscr{L}(L^s,L^s)$ is continuous, this implies the continuity of $\lambda\mapsto (I-\mathcal{B}_\lambda)^{-1}$ at $\lambda_0$. The proof of Proposition \ref{SpecRad} is complete.

\cqfd

\noindent{\em Proof of Theorem \ref{pi_heat}:} Given the result of Proposition \ref{SpecRad}, the result of Theorem \ref{pi_heat} follows from an iterative argument similar to the one used in the proof of \cite[Theorem 2.1]{CDS}. For $t>0$, let us write the perturbation formula:

$$(I+t(\mathcal{L}+\alpha\Pi))^{-1}=(I-(I+t\mathcal{H})^{-1}t(\mathcal{W}_0-\alpha\Pi))^{-1}(I+t\mathcal{H})^{-1}.$$
According to \cite[Lemma 2.5]{CDS},

$$\sup_{t>0}||(I+t\mathcal{H})^{-1}V_{\sqrt{t}}^{\frac{1}{r}-\frac{1}{s}}||_{r\to s}<+\infty,$$
for all $1\leq r\leq s\leq \infty$ with $\frac{1}{r}-\frac{1}{s}<\frac{2}{\nu}$. It follows from Proposition \ref{SpecRad} that for all $s\in [2,q']$,

$$\sup_{t>0}||(I-(I+t\mathcal{H})^{-1}t(\mathcal{W}_0-\alpha \Pi))^{-1}||_{s\to s}<+\infty.$$
Hence, for all $2\leq r\leq s\leq q'$ such that $\frac{1}{r}-\frac{1}{s}<\frac{2}{\nu}$,

$$\sup_{t>0}||(I+t(\mathcal{L}+\alpha\Pi))^{-1}V_{\sqrt{t}}^{\frac{1}{r}-\frac{1}{s}}||_{r\to s}<\infty.$$
Thanks to the uniform volume growth assumption \eqref{vol_u}, this is equivalent to

$$\sup_{t>0}V_{\sqrt{t}}^{\frac{1}{r}-\frac{1}{s}}||(I+t(\mathcal{L}+\alpha\Pi))^{-1}||_{r\to s}<\infty,$$
for all $2\leq r\leq s\leq q'$ such that $\frac{1}{r}-\frac{1}{s}<\frac{2}{\nu}$
By duality, one also gets

$$\sup_{t>0}V_{\sqrt{t}}^{\frac{1}{r}-\frac{1}{s}}||(I+t(\mathcal{L}+\alpha\Pi))^{-1}||_{r\to s}<\infty,$$
for all $q\leq r\leq s\leq 2$ such that $\frac{1}{r}-\frac{1}{s}<\frac{2}{\nu}$. By interpolation, one obtains

\begin{equation}\label{iterate}
\sup_{t>0}V_{\sqrt{t}}^{\frac{1}{r}-\frac{1}{s}}||(I+t(\mathcal{L}+\alpha\Pi))^{-1}||_{r\to s}<\infty,
\end{equation}
for all $q\leq r\leq s\leq q'$ such that $\frac{1}{r}-\frac{1}{s}<\frac{2}{\nu}$. Now, fix $2\leq s\leq q'$. We claim that

\begin{equation}\label{iterate2}
\sup_{t>0}V_{\sqrt{t}}^{\frac{1}{2}-\frac{1}{s}}||(I+t(\mathcal{L}+\alpha\Pi))^{-n}||_{2\to s}<\infty,
\end{equation}
where $n$ is any fixed integer such that

$$\frac{1}{2}-\frac{1}{s}<\frac{2n}{\nu}.$$
Indeed, if $n=1$, i.e. $\frac{1}{2}-\frac{1}{s}<\frac{2}{\nu}$, then by \eqref{iterate} we are done. If not, let $r_0=2<r_1<\cdots<r_n=s$ be an increasing sequence such that for every $k=0,\ldots,n-1$,

$$\frac{1}{r_k}-\frac{1}{r_{k+1}}<\frac{2}{\nu}.$$
Thanks to \eqref{iterate}, one has for every $k=0,\ldots,n-1$,

$$\sup_{t>0}V_{\sqrt{t}}^{\frac{1}{r_k}-\frac{1}{r_{k+1}}}||(I+t(\mathcal{L}+\alpha\Pi))^{-1}||_{r_k\to r_{k+1}}<\infty.$$
Hence,

$$\sup_{t>0}V_{\sqrt{t}}^{\frac{1}{r}-\frac{1}{s}}||(I+t(\mathcal{L}+\alpha\Pi))^{-n}||_{2\to s}<\infty,$$
and \eqref{iterate2} is proved. Now, since the operator $\mathcal{L}+\alpha\Pi$ is self-adjoint, the Spectral Theorem implies that the semigroup $e^{-t(\mathcal{L}+\alpha\Pi)}$ is analytic on the half-plane $\{z\in \mathbb{C}\,;\,\mathfrak{Re}(z)>0\}$, and as a consequence 

$$\sup_{t>0}||(I+t(\mathcal{L}+\alpha\Pi))^{n}e^{-t(\mathcal{L}+\alpha\Pi)}||_{2\to 2}<+\infty.$$
Writing 

$$e^{-t(\mathcal{L}+\mu\Pi)}=(I+t(\mathcal{L}+\alpha \Pi))^{-n} (I+t(\mathcal{L}+\alpha\Pi))^{n}e^{-t(\mathcal{L}+\alpha\Pi)},$$
one gets from \eqref{iterate2} that

\begin{equation}\label{iterate3}
\sup_{t>0}V_{\sqrt{t}}^{\frac{1}{2}-\frac{1}{s}}||e^{-t(\mathcal{L}+\alpha\Pi)}||_{2\to s}<\infty,
\end{equation}
for all $2\leq s\leq q'$. 



Since $\mathcal{L}$ and $\Pi$ commute, and $\Pi^2=\Pi$, one finds

$$\begin{array}{rcl}
e^{-t(\mathcal{L}+\alpha\Pi)}&=&e^{-t\mathcal{L}}e^{-t\alpha\Pi}\\\\
&=&e^{-t\mathcal{L}}(I+(e^{-t\alpha}-1)\Pi)
\end{array}$$
Notice that

$$I-\Pi=(I+(e^{-t\alpha}-1)\Pi)(I-\Pi),$$
so \eqref{iterate3} implies that

\begin{equation}\label{iterate6}
\sup_{t>0}V_{\sqrt{t}}^{\frac{1}{2}-\frac{1}{s}}||e^{-t\mathcal{L}}(I-\Pi)||_{2\to s}<\infty,\quad \forall 2\leq s\leq q'.
\end{equation}
By duality and the fact that $\mathcal{L}$ and $\Pi$ commute, \eqref{iterate6} yields

\begin{equation}\label{iterate7}
\sup_{t>0}V_{\sqrt{t}}^{\frac{1}{r}-\frac{1}{2}}||e^{-t\mathcal{L}}(I-\Pi)||_{r\to 2}<\infty,\quad \forall q\leq r\leq 2.
\end{equation}
By composition of \eqref{iterate6} and \eqref{iterate7} and doubling, one obtains

\begin{equation}\label{iterate8}
\sup_{t>0}V_{\sqrt{t}}^{\frac{1}{r}-\frac{1}{s}}||e^{-t\mathcal{L}}(I-\Pi)||_{r\to s}<\infty,\quad \forall q\leq r\leq 2 \leq s\leq q'.
\end{equation}
CLAIM: the semi-group $e^{-t\mathcal{L}}$ is bounded on $L^p$, for any $p\in [q,q']$. 

\medskip

Postponing for the moment the proof of the above claim (see Proposition \ref{Lp-bdd}), let us finish the proof of the first part of Theorem \ref{pi_heat}. The claim, together with the fact that the projector $\Pi$ is bounded on $L^p$ for $p\in [q,q']$ implies that $e^{-t\mathcal{L}}(I-\Pi)$ is bounded on $L^p$ for $p\in [q,q']$. Interpolating this with \eqref{iterate8} yields

$$\sup_{t>0}V_{\sqrt{t}}^{\frac{1}{r}-\frac{1}{s}}||e^{-t\mathcal{L}}(I-\Pi)||_{r\to s}<\infty,\quad \forall q\leq r \leq s\leq q',$$
which proves the first part of Theorem \ref{pi_heat}.

\medskip

Concerning the second part, we assume that $q=1$, and we denote by $k_t(x,y)$ the kernel of $e^{-t\mathcal{L}}(I-\Pi)$. Since 

$$e^{-t\mathcal{L}}(I-\Pi)=(I-\Pi)e^{-t\mathcal{L}}(I-\Pi)=(I-\Pi)e^{-t\mathcal{L}},$$
the kernel $k_t(x,y)$ satisfies the semigroup property, and is symmetric in the following sense:

$$k_t(x,y)=k_t(y,x)^*.$$
Thus,

$$\begin{array}{rcl}
\int_M||k_t(x,y)||_{y,x}^2\,d\mu(y)&=&\int_M \mathrm{Tr} \left(k_t(x,y)k_t(x,y)^*\right)\,d\mu(y)\\\\
&=& \int_M \mathrm{Tr} \left(k_t(x,y)k_t(y,x)\right)\,d\mu(y)\\\\
&=&\mathrm{Tr}\left(\int_M k_t(x,y)k_t(y,x)\,d\mu(y)\right)\\\\
&=&\mathrm{Tr}\left(k_{2t}(x,x)\right)
\end{array}$$
Also, $k_t(x,x)$ is non-negative and self-adjoint, therefore

$$\mathrm{Tr}\,k_t(x,x)\geq ||k_t(x,x)||_{x,x}.$$
As a consequence, by \eqref{d} the $L^2\to L^\infty$ estimate:

\begin{equation}\label{L2-Linf}
\sup_{t>0} ||V_{\sqrt{t}}^{-\frac{1}{2}}e^{-t\mathcal{L}}(I-\Pi)||_{2\to \infty}<\infty
\end{equation}
is equivalent to the on-diagonal estimates \eqref{on-diago} for the kernel $k_t(x,y)$ of the operator $e^{-t\mathcal{L}}(I-\Pi)$. The proof is complete.

\cqfd
The following proposition has been used in the proof of Theorem \ref{pi_heat}:

\begin{Pro}\label{Lp-bdd}

Assume that $M$ satisfies \eqref{d} and \eqref{UE}. Let $E$ be a vector bundle with basis $M$, endowed with a connection $\nabla$ compatible with the metric, and let

$$\mathcal{L}=\nabla^*\nabla+\mathcal{R}_+-\mathcal{R}_-$$
be a non-negative generalised Schr\"odinger operator on $E$. Let $\Pi$ be the $L^2$ orthogonal projection onto $\mathrm{Ker}_{L^2}(\mathcal{L})$. Assume that for some $p\in [1,2)$,

$$\sup_{t>0}\left\|V_{\sqrt{t}}^{\frac{1}{p}-\frac{1}{2}}e^{-t\mathcal{L}}(I-\Pi)\right\|_{p\to 2}<\infty.$$
Assume also that $\Pi$ is bounded on $L^p$. Then, the semi-group $e^{-t\mathcal{L}}$ is uniformly bounded on $L^p$.

\end{Pro}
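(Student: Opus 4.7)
The plan is to reduce matters to the spectrally truncated semigroup $T_t := e^{-t\mathcal L}(I-\Pi)$ and then run a covering-plus-off-diagonal argument at scale $\sqrt t$. Since $\mathcal L\Pi=0$ gives $e^{-t\mathcal L}\Pi=\Pi$, we have $e^{-t\mathcal L}=T_t+\Pi$, and the assumed $L^p$-boundedness of $\Pi$ reduces the problem to showing $\sup_{t>0}\|T_t\|_{p\to p}<\infty$.

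The first ingredient will be a local $L^p$-bound, valid on every ball $B=B(x,\sqrt t)$. H\"older's inequality with exponents $2/p$ and $2/(2-p)$ (which needs $p<2$) gives
$$\|T_tf\|_{L^p(B)}^p\le V(B)^{(2-p)/2}\|T_tf\|_{L^2(B)}^p.$$
The algebraic identity $(2-p)/2=p(1/p-1/2)$, combined with the doubling estimate $V(y,\sqrt t)\simeq V(B)$ for $y\in B$, lets one absorb the volume factor into the pointwise weight $V_{\sqrt t}^{1/p-1/2}$. Invoking the hypothesis globally then yields the uniform local bound
$$\|T_tf\|_{L^p(B)}\lesssim\bigl\|V_{\sqrt t}^{1/p-1/2}T_tf\bigr\|_{L^2(M)}\lesssim\|f\|_p.$$

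To globalize, I would cover $M$ by balls $\{B_i=B(x_i,\sqrt t)\}$ of uniformly bounded overlap (supplied by \eqref{d}) and take a subordinate disjoint partition $M=\sqcup_j A_j$ with $A_j\subset B_j$; decomposing $f=\sum_j f_j$ with $f_j=f\mathbf 1_{A_j}$ gives $\|f\|_p^p=\sum_j\|f_j\|_p^p$. The target would then be an off-diagonal bound
$$\|T_tf_j\|_{L^p(B_i)}\lesssim e^{-cd(B_i,B_j)^2/t}\|f_j\|_p,$$
after which H\"older in $j$ with exponents $(p,p')$ (using $\sum_j e^{-cd_{ij}^2/t}\lesssim 1$ by doubling) and summation in $i$ with bounded overlap would close the argument as $\|T_tf\|_p^p\lesssim\sum_j\|f_j\|_p^p=\|f\|_p^p$.

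The hard part will be this off-diagonal step: as the remark following Theorem \ref{pi_heat} already observes, $T_t$ inherits the non-locality of $\Pi$, so Davies--Gaffney cannot be applied to $T_t$ directly. My strategy would be to factor $T_t=e^{-(t/2)\mathcal L}\,T_{t/2}$, use the hypothesis at time $t/2$ to control $T_{t/2}f_j$ in the \emph{global} weighted $L^2$-norm, and then transfer this into a local off-diagonal $L^2$-bound on each $B_i$ by means of the Davies--Gaffney $L^2$-off-diagonal estimates for the genuine heat semigroup $e^{-(t/2)\mathcal L}$ (which, being the semigroup of a non-negative self-adjoint local operator on $L^2$, does satisfy them); one finally converts back to $L^p$ by the local H\"older bound above, absorbing the polynomial $V$-ratios between $B_i$ and $B_j$ into the Gaussian exponential via doubling.
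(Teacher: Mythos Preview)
There is a genuine gap in the off-diagonal step. Your factorisation $T_t=e^{-(t/2)\mathcal L}\,T_{t/2}$ cannot produce the claimed decay $e^{-cd(B_i,B_j)^2/t}$, because Davies--Gaffney for $e^{-(t/2)\mathcal L}$ gives decay only in the distance between the \emph{support of the input} and the output ball $B_i$. The input here is $T_{t/2}f_j$, which --- precisely because of the non-locality of $\Pi$ that you yourself flag --- is spread over all of $M$ and has no reason to be concentrated near $B_j$. If you decompose $T_{t/2}f_j=\sum_k\chi_{A_k}T_{t/2}f_j$ and apply Davies--Gaffney piece by piece, you obtain decay in $d(B_i,B_k)$, not in $d(B_i,B_j)$; after Cauchy--Schwarz and the weighted hypothesis, the sum over $k$ only reproduces the diagonal bound $\|T_tf_j\|_{L^p(B_i)}\lesssim\|f_j\|_p$, with no residual $j$-decay to feed into your H\"older-in-$j$ step. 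Nor can you extract $L^p\to L^p$ off-diagonal bounds for $e^{-t\mathcal L}$ itself by interpolation, since the only global $L^p$ information available is the hypothesis on $T_t$, and any attempt to use $e^{-t\mathcal L}=T_t+\Pi$ brings back the non-local $\Pi$.

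The paper sidesteps this by replacing $e^{-t\mathcal L}$ with the wave-type operator $F_a(r\sqrt{\mathcal L})$ (the inverse Fourier transform of $(1-s^2)_+^a$), recovered via the subordination formula $e^{-t\mathcal L}=\int_0^\infty s^{a+1/2}F_a(\sqrt{st\mathcal L})e^{-s/4}\,ds$. The point is that $F_a(r\sqrt{\mathcal L})$ has \emph{exact} finite propagation speed: $\chi_{B_i}F_a(r\sqrt{\mathcal L})\chi_{A_j}=0$ whenever $d(B_i,B_j)>r$. For the off-diagonal part one then writes $F_a(r\sqrt{\mathcal L})(I-\Pi)=F_a(r\sqrt{\mathcal L})-F_a(0)\Pi$; the first term vanishes identically off-diagonal, and the second is controlled globally in $L^p$ by the assumed boundedness of $\Pi$ (summing $\sum_k\|\chi_k\Pi\chi_l\|_{p\to p}=\|\Pi\chi_l\|_{p\to p}\le\|\Pi\|_{p\to p}$). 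The near-diagonal terms are handled by your H\"older trick, after first converting the heat hypothesis into a resolvent bound $\sup_t\|V_{\sqrt t}^{1/p-1/2}(I+t\mathcal L)^{-N}(I-\Pi)\|_{p\to 2}<\infty$ and using that $F_a(\lambda)(1+\lambda^2)^N$ is bounded for $a$ large. This splitting of the non-local piece $\Pi$ from an operator with genuine finite propagation is the missing idea.
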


\begin{proof}

The proof relies on ideas from \cite{CS} and \cite{BCS}. The starting point is the following formula (see the proof of \cite[Proposition 4.1.6]{BCS}):

\begin{equation}\label{subord}
e^{-t\mathcal{L}}=\int_0^\infty s^{a+\frac{1}{2}}F_a(\sqrt{st\mathcal{L}})e^{-\frac{s}{4}}\,ds,
\end{equation}
where the function $F_a$ is (up to a multiplicative constant) the inverse Fourier transform of the function $(1-t^2)_+^a$, $a>0$. Since the function $(1-t^2)_+^a$ is even and has support included in $[-1,1]$, it follows from the finite speed ($=1$) propagation property for the wave equation of $\mathcal{L}$ (see \cite[Theorem 2]{Sik} and \cite[Section 8.3]{CDS}) that for every $r>0$, 

\begin{equation}\label{FSP}
\mathrm{supp}(F_a(r\sqrt{\mathcal{L}}))\subset \mathcal{D}_r,
\end{equation}
where $\mathcal{D}_r$ is the following neighborhood of the diagonal:

$$\mathcal{D}_r=\{(x,y)\in M\times M\,;\, d(x,y)\leq r\}.$$
See \cite[Lemma 4.1.3]{BCS}. Clearly, according to \eqref{subord}, it is enough to prove that for some $a>0$,

\begin{equation}\label{Lp-bd}
\sup_{r>0}||F_a(r\sqrt{L})||_{p\to p}<+\infty.
\end{equation}
First, we prove that for $N>\frac{\nu}{2}\left(\frac{1}{p}-\frac{1}{2}\right)$, where $\nu$ is the doubling exponent from \eqref{dnu}, there holds:

\begin{equation}\label{resol_p_to_2}
\sup_{t>0}\left\|V_{\sqrt{t}}^{\frac{1}{p}-\frac{1}{2}}(I+t\mathcal{L})^{-N}(I-\Pi)\right\|_{p\to 2}<+\infty.
\end{equation}
For this, we write

$$\begin{array}{rcl}
V_{\sqrt{t}}^{\frac{1}{p}-\frac{1}{2}}(I+t\mathcal{L})^{-N}&=&\frac{1}{\Gamma(N)}\int_0^\infty V_{\sqrt{t}}^{\frac{1}{p}-\frac{1}{2}}e^{-s(I+t\mathcal{L})}\,s^{N-1}\,ds\\\\
&=&\frac{1}{\Gamma(N)}\int_0^\infty \left(\frac{V_{\sqrt{t}}}{V_{\sqrt{ts}}}\right)^{\frac{1}{p}-\frac{1}{2}}V_{\sqrt{st}}^{\frac{1}{p}-\frac{1}{2}}e^{-st\mathcal{L}}\,s^{N-1}e^{-s}\,ds
\end{array}$$
By doubling,

$$\frac{V_{\sqrt{t}}}{V_{\sqrt{ts}}}\leq \max(1,s^{-\nu/2})$$
(note that the estimate is trivial when $s\geq1$). Let $\delta=\frac{\nu}{2}\left(\frac{1}{p}-\frac{1}{2}\right)$. Using the assumption on the heat semi-group, one gets

$$\left\|V_{\sqrt{t}}^{\frac{1}{p}-\frac{1}{2}}(I+t\mathcal{L})^{-N}(I-\Pi)\right\|_{p\to 2}\lesssim \int_0^\infty s^{N-1}\max(s^{-\delta},1)\,e^{-s}\,ds<+\infty,$$
since $N$ has been assumed to be strictly greater than $\delta$, hence \eqref{resol_p_to_2}. Now, write, for some fixed $N>\delta$,

$$V_{r}^{\frac{1}{p}-\frac{1}{2}}F_a(r\sqrt{\mathcal{L}})(I-\Pi)=\left(V_{r}^{\frac{1}{p}-\frac{1}{2}}(I+r^2\mathcal{L})^{-N}(I-\Pi)\right)\left(F_a(r\sqrt{\mathcal{L}})(I+r^2\mathcal{L})^{+N}\right).$$
According to \eqref{resol_p_to_2}, the first factor is bounded as an operator from $p$ to $2$, uniformly in $r>0$. In order to deal with the second factor, note that if $a>2N$, the function $(1-t^2)_+^a$ is in $C_0^{2N+1}(\mathbb{R})$, hence its inverse Fourier transform $F_a(\lambda)$ is smooth and decays at a polynomial rate of order $2N$, hence

$$\sup_{\lambda\geq0} |F_a(\lambda)(1+\lambda^2)^N|<+\infty.$$
The Spectral Theorem then implies that

$$\sup_{r>0}\left\|F_a(r\sqrt{\mathcal{L}})(I+r^2\mathcal{L})^N\right\|_{2\to 2}<+\infty.$$
Finally, one gets that for $a>2N$,

\begin{equation}\label{F_a}
\sup_{r>0}\left\|V_{r}^{\frac{1}{p}-\frac{1}{2}}F_a(r\sqrt{\mathcal{L}})(I-\Pi)\right\|_{2\to p}<+\infty.
\end{equation}
We are now in position to prove the uniform boundedness on $L^p$ of $F_a(r\sqrt{\mathcal{L}})$, following the ideas in the proof of \cite[Corollary 4.16]{CS}. Take a maximal sequence $(x_n)_{n\in\mathbb{N}}$ such that the balls $B(x_k,\frac{r}{2})$ are disjoint. By doubling, there exists $D\in\mathbb{N}^*$ such that every $x\in M$ is contained in at most $D$ balls $B(x_k,r)$. Let $\chi_k$ be the characteristic function of the set $B_k:=B(x_k,r)\setminus \cup_{i=0}^{k-1}B(x_i,r)$. Clearly, the sets $B_k$ are disjoint, and 

$$M=\cup_{k=0}^\infty B_k.$$
We also set $r_{kl}=d(B_k,B_l)$. Notice that since $\Pi$ is bounded on $L^p$, and $F_a(r\sqrt{\mathcal{L}})\Pi=F_a(0)\Pi$ is bounded uniformly in $r>0$, the uniform boundedness on $L^p$ of $F_a(r\sqrt{\mathcal{L}})$ is equivalent to that of $F_a(r\sqrt{\mathcal{L}})(I-\Pi)$. Using that $\mathbf{1}=\sum_{i=0}^\infty \chi_i$ and the supports of $\chi_i$ are disjoint, one has by Minkowski's inequality:

$$\left\|F_a(r\sqrt{\mathcal{L}})(I-\Pi)\omega \right\|_{p}^p\leq \sum_{k=0}^\infty \left(\sum_{l=0}^\infty \left\|\chi_k F_a(r\sqrt{\mathcal{L}})(I-\Pi)\chi_l\right\|_{p\to p}||\chi_l \omega||_p\right)^p.$$
Applying the following elementary inequality (see \cite[Equation 4.42]{CS}):

$$\sum_{k=0}^\infty \left(\sum_{l=0}^\infty |c_{k,l}||a_l|\right)^p \leq \max\left(  \sup_l\sum_{k=0}^\infty |c_{k,l}|,\sup_k\sum_{l=0}^\infty |c_{k,l}| \right)\sum_{l=0}^\infty |a_l|^p,$$
with the choices $c_{k,l}=\left\|\chi_k F_a(r\sqrt{\mathcal{L}})(I-\Pi)\chi_l\right\|_{p\to p}$, $a_l=||\chi_l \omega||_p$, one gets 

$$\begin{array}{rcl}
\left\|F_a(r\sqrt{\mathcal{L}})(I-\Pi)\omega \right\|_{p}^p&\leq& ||\omega||_p^p\max\Big(  \sup_l\sum_{k=0}^\infty \left\|\chi_k e^{-t\mathcal{L}}(I-\Pi)\chi_l\right\|_{p\to p},\\
&&\sup_k\sum_{l=0}^\infty \left\|\chi_k F_a(r\sqrt{\mathcal{L}})(I-\Pi)\chi_l\right\|_{p\to p} \Big),
\end{array}$$
Hence,

$$\begin{array}{rcl}
\left\|F_a(r\sqrt{\mathcal{L}})(I-\Pi) \right\|_{p\to p}^p&\leq&\max\Big(  \sup_l\sum_{k=0}^\infty \left\|\chi_k F_a(r\sqrt{\mathcal{L}})(I-\Pi)\chi_l\right\|_{p\to p},\\
&&\sup_k\sum_{l=0}^\infty \left\|\chi_k F_a(r\sqrt{\mathcal{L}})(I-\Pi)\chi_l\right\|_{p\to p} \Big),
\end{array}$$
and in order to prove uniform boundedness of $F_a(r\sqrt{\mathcal{L}})(I-\Pi)$ on $L^p$, it is enough to prove that the above maximum is uniformly bounded. The proofs that the supremum in $k$ or in $l$ is finite being similar, we choose to explain only why the supremum in $l$ is finite. For this, we split the sum in $k$ into two parts, an on-diagonal one and an off-diagonal one:

$$\sum_{k=0}^\infty \left\|\chi_k F_a(r\sqrt{\mathcal{L}})(I-\Pi)\chi_l\right\|_{p\to p}=\sum_{r_{kl}\leq 2r}(\cdots)+\sum_{r_{kl}>2r}(\cdots)=I+II.$$
The on-diagonal part will be taken care of using \eqref{F_a}. Notice that by doubling, the number of terms in the sum $I$ is finite, and bounded independently of $r$. Hence, it is enough to control individually each term $\left\|\chi_k F_a(r\sqrt{\mathcal{L}})(I-\Pi)\chi_l\right\|_{p\to p}$, $r_{kl}\leq 2r$ of the sum. By H\"older inequality and doubling,

$$\begin{array}{rcl}
\left\|\chi_k F_a(r\sqrt{\mathcal{L}})(I-\Pi)\chi_l\right\|_{p\to p}&\leq& V(x_k,r)^{\frac{1}{p}-\frac{1}{2}}\left\|\chi_k F_a(r\sqrt{\mathcal{L}})(I-\Pi)\chi_l\right\|_{p\to 2}\\\\
&\lesssim & \left\|V_{r}^{\frac{1}{p}-\frac{1}{2}} F_a(r\sqrt{\mathcal{L}})(I-\Pi)\chi_l\right\|_{p\to 2},
\end{array}$$
which is uniformly bounded by assumption. Hence, the on-diagonal term $I$ is uniformly bounded.

Let us now deal with the off-diagonal part $II$. Notice that, since the sets $B_k$ are disjoint,

$$\sum_{k=0}^\infty \left\|\chi_k \Pi\chi_l\right\|_{p\to p}=\left\|\Pi\chi_l\right\|_{p\to p}\leq \left\|\Pi\right\|_{p\to p}<+\infty,$$
so that, using $F_a(r\sqrt{\mathcal{L}})\Pi=F_a(0)\Pi$, one gets

$$\sum_{k\geq 0,\,r_{kl}>2r} \left\|\chi_k F_a(r\sqrt{\mathcal{L}})(I-\Pi)\chi_l\right\|_{p\to p}\leq |F_a(0)|\left\|\Pi\right\|_{p\to p}+\sum_{k\geq 0,\,r_{kl}>2r} \left\|\chi_k F_a(r\sqrt{\mathcal{L}})\chi_l\right\|_{p\to p}.$$
However, according to \eqref{FSP}, if $r_{kl}>r$ then $\chi_k F_a(r\sqrt{\mathcal{L}})\chi_l\equiv 0$. Hence, one gets

$$\sum_{k\geq 0,\,r_{kl}>2r} \left\|\chi_k F_a(r\sqrt{\mathcal{L}})(I-\Pi)\chi_l\right\|_{p\to p}\leq |F_a(0)|\left\|\Pi\right\|_{p\to p},$$
which is finite, and bounded independently of $r$. Hence, the off-diagonal term $II$ is uniformly bounded. Finally, we have proved that for $a>\nu\left(\frac{1}{p}-\frac{1}{2}\right)$,

$$\sup_{r>0}\left\|F_a(r\sqrt{\mathcal{L}})\right\|_{p\to p}<+\infty,$$
and it follows that

$$\sup_{t>0}\left\|e^{-t\mathcal{L}}\right\|_{p\to p}<+\infty.$$
This concludes the proof.

\end{proof}

\section{Proof of Theorems \ref{gradient} and \ref{forms}}

Before embarking on the proofs, let us present the proof of Propositions \ref{fgrad_off_equiv} and \ref{grad_off_equiv}. We start with Proposition \ref{fgrad_off_equiv}.

\medskip

\noindent {\em Proof of Proposition \ref{fgrad_off_equiv}:}

\medskip

In all the proof, for simplicity of notations one denotes $\vec{\Delta}$ for the Hodge Laplacian acting on $\Lambda^k T^*M$, without specifying the measure $\mu$ or the degree of the form $k$. We start with the proof of (i). Thus, we assume that $(\vec{G}_{2,s})$ holds for some $s>2$, and one wishes to prove its off-diagonal counterpart $(\vec{G}^{\mathrm{off}}_{2,s})$. Let $x,y\in M$, denote $r=4d(x,y)$, and let $f_1\in  L^2$, $f_2\in L^{s'}\cap L^2$ smooth, and $\mathrm{supp}(f_1)\subset B(y,r)$, $\mathrm{supp}(f_2)\subset B(x,r)$. Consider the analytic function

$$F(z):=\langle f_2,(d+d^*)e^{-z\vec{\Delta}} f_1\rangle,\quad \mathfrak{Re}(z)\geq0.$$
(here, if $f_1$ is a $k$-form, then $f_2=\eta_2\oplus \omega_2$ is the sum of a $k-1$ and a $k+1$-form). Since $\vec{\Delta}$ is self-adjoint, it follows from the Spectral Theorem that for some constant $A>0$,

$$|F(z)|\leq A||f_1||_2||d\eta_2+d^*\omega_2||_2,\quad\mathfrak{Re}(z)\geq0.$$
Therefore, $F(z)$ is bounded in the right half-plane. According to the Davies-Gaffney estimates (see \cite[Lemma 3.8]{AMR}), one can assume that $A$ is chosen large enough so that, for some constant $C>0$,

$$|F(t)|\leq \frac{A}{\sqrt{t}}e^{-\frac{r^2}{Ct}}||f_1||_2||f_2||_2,\quad \forall t>0.$$
Let $\gamma=\frac{r^2}{C}$. Up to increasing the value of the constants $A$ and $C$, one can assume that

$$|F(t)|\leq Ar^{-1}e^{-\frac{\gamma}{t}}||f_1||_2||f_2||_2,\quad \forall 0<t\leq \gamma.$$
For $\mathfrak{Re}(z)>0$, write $z=t+is$, and denote $\alpha=\frac{1}{2}-\frac{1}{s}>0$; then,

$$\begin{array}{rcl}
F(z)&=&\langle f_2,(d+d^*)e^{-t\vec{\Delta}} e^{is\vec{\Delta}}f_1\rangle.\\\\
&=&\langle V_{\sqrt{t}}^{-\alpha}f_2,V_{\sqrt{t}}^{\alpha}(d+d^*)e^{-t\vec{\Delta}} e^{is\vec{\Delta}}f_1\rangle\\\\
&=& V(x,r)^{-\alpha} \langle \left(\frac{V(x,r)}{V_{\sqrt{t}}}\right)^{\alpha}f_2,V_{\sqrt{t}}^{\alpha}(d+d^*)e^{-t\vec{\Delta}} e^{is\vec{\Delta}}f_1\rangle
\end{array}$$
Write $z=t+is$, and let us assume that $\mathfrak{Re}\left(\frac{\gamma}{z}\right)\geq 1$. This implies in particular that $t\leq \gamma$. By doubling, one then has

$$\frac{V(x,r)}{V(w,\sqrt{t})}\lesssim \left(\frac{r^2}{t}\right)^{\nu/2},\quad \forall w\in \mathrm{supp}(f_2)\subset B(x,r).$$
Since $||e^{is\vec{\Delta}}||_{2\to 2}\leq 1$ by self-adjointness, it follows from $(\vec{G}_{2,s})$ that, for some constant $A>0$, and for $\mathfrak{Re}\left(\frac{\gamma}{z}\right)\geq1$,

$$\begin{array}{rcl}
|F(z)|&\leq& \frac{A}{\sqrt{t}}\left(\frac{r^2}{t}\right)^{\alpha\nu/2}V(x,r)^{-\alpha}||f_1||_{2}||f_2||_{s'}\\\\
&\leq& Ar^{-1}\left(\frac{r^2}{t}\right)^{(\alpha\nu+1)/2}V(x,r)^{-\alpha}||f_1||_{2}||f_2||_{s'}
\end{array}$$
Hence,

$$|F(z)|\leq Ar^{-1}V(x,r)^{-\alpha}\left(\frac{\mathfrak{Re}(z)}{\gamma}\right)^{-(\nu\alpha+1)/2}||f_1||_{2}||f_2||_{s'},\quad \mathfrak{Re}\left(\frac{\gamma}{z}\right)\geq1.$$
According to \cite[Proposition 2.3]{CS}, one gets

$$|F(z)|\lesssim Ar^{-1}V(x,r)^{-\alpha}\left(\frac{\gamma}{|z|}\right)^{\alpha \nu+1}\exp\left(-\mathfrak{Re}\left(\frac{\gamma}{z}\right)\right)||f_1||_{2}||f_2||_{s'},\quad\mathfrak{Re}\left(\frac{\gamma}{z}\right)\geq1.$$
Specializing to $z=t>0$, one obtains

$$|F(t)|\lesssim r^{-1}V(x,r)^{-\alpha}\left(\frac{r^2}{t}\right)^{-\nu\alpha-1}e^{-\frac{r^2}{Ct}}||f_1||_{2}||f_2||_{s'},\quad 0<t\leq \frac{r^2}{C},$$
which easily yields by doubling (with a different constant $C$)

$$|F(t)|\lesssim \frac{1}{\sqrt{t}}V(x,\sqrt{t})^{-\alpha}e^{-\frac{r^2}{Ct}}||f_1||_{2}||f_2||_{s'},\quad 0<t\leq \frac{r^2}{C},$$
Hence,

$$\left|\langle f_2,(d+d^*)e^{-t\vec{\Delta}} f_1\rangle\right| \lesssim \frac{1}{\sqrt{t}}V(x,\sqrt{t})^{-\alpha}e^{-\frac{r^2}{Ct}}||f_1||_{2}||f_2||_{s'}.$$
Since the set of smooth $f_2\in L^2\cap L^{s'}$ with support in $B(x,r)$ is dense in $L^{s'}(B(x,r))$, one finds that

$$||\sqrt{t}(d+d^*)e^{-t\vec{\Delta}}||_{L^r(B(y,r))\to L^s(B(x,r))}\lesssim V(x,\sqrt{t})^{-\alpha}e^{-\frac{r^2}{Ct}},\quad 0<t\leq \frac{r^2}{C}.$$
Since $B(x,\sqrt{Ct})\subset B(x,r)$ and $B(y,\sqrt{Ct})\subset B(y,r)$ for $t\leq \frac{r^2}{Ct}$, changing $t$ into $C^{-1}t$ easily yields $(\vec{G}^{\mathrm{off}}_{2,s})$ for $t\leq r^2$. However, $(\vec{G}^{\mathrm{off}}_{2,s})$ for $t\geq r^2$ follows directly from $(\vec{G}_{2,s})$ , so (i) is proved.

\medskip

The proof of (ii) and (iii) is identical to the one of \cite[Corollary 4.16]{CS}, replacing $e^{-zL}$ there by $\sqrt{t}(d+d^*)e^{-t\vec{\Delta}}$. The only point to notice is that the off-diagonal Davies-Gaffney estimates for $\sqrt{t}(d+d^*)e^{-t\vec{\Delta}}$ hold according to \cite[Lemma 3.8]{AMR}. We leave the details to the reader. Contrary to \cite[Corollary 4.16]{CS}, one cannot get the full interval $[q,q']$ for \eqref{fGp}, because the duality argument is not applicable.

$\Box$

\medskip

\noindent {\em Proof of Proposition \ref{grad_off_equiv}:}

\medskip

Proposition \ref{grad_off_equiv} follows from the more general Proposition \ref{fgrad_off_equiv}, except for the first point (i), where it is not assumed that either $r$ nor $s$ is equal to $2$. For the proof of (i), we follow the proof of Proposition \ref{fgrad_off_equiv}, (i), keeping the same notations. We assume that \eqref{grad_r_s} holds for some $1\leq r\leq s\leq \infty$, and one wishes to prove its off-diagonal counterpart \eqref{grad_off}. Let $x,y\in M$, denote $r=4d(x,y)$, and let $f\in  L^r\cap L^2$, $X\in L^{s'}\cap L^2$, $X$ smooth vector field, and $\mathrm{supp}(f)\subset B(y,r)$, $\mathrm{supp}(X)\subset B(x,r)$. Consider the complex-valued function

$$F(z):=\langle X,\nabla e^{-z\Delta} f\rangle,\quad \mathfrak{Re}(z)\geq0.$$
By argument similar to the proof of Proposition \ref{fgrad_off_equiv}, one has

$$|F(z)|\leq A||f||_||\div X||_{},\quad\mathfrak{Re}(z)\geq0,$$
and

$$|F(t)|\leq Ar^{-1}e^{-\frac{\gamma}{t}}||f||_{2}||X||_{2},\quad \forall 0<t\leq \gamma,$$
where where $\gamma=\frac{r^2}{C}$. We claim that the following estimate holds:

\begin{equation}\label{contract}
|F(z)|\leq Ar^{-1-\nu/2}V(x,r)^{-\alpha} |z|^\nu \left(\frac{\mathfrak{Re}(z)}{\gamma}\right)^{-(\nu\alpha+2\nu+1)/2}||f||_{r}||X||_{s'},\quad \mathfrak{Re}\left(\frac{\gamma}{z}\right)\geq1.
\end{equation}
In order to prove \eqref{contract}, we will use the fact that, according to \cite[Corollary 4.5]{CS}, if $p_z(x,y)$ is the heat kernel of $e^{-z\Delta}$, $\mathfrak{Re}(z)\geq0$, then by \eqref{dnu} and \eqref{UE}, for every $v,w\in M$ and $\mathfrak{Re}(z)>0$,

\begin{equation}\label{complex}
|p_z(v,w)|\lesssim \left(\frac{|z|}{\mathfrak{Re}(z)}\right)^\nu \frac{1}{V(w,(\mathfrak{Re}(1/z))^{-1/2})}\exp\left(-\mathfrak{Re}\left(\frac{d^2(v,w)}{Cz}\right)\right).
\end{equation}
The estimate \eqref{complex} implies by standard arguments using \eqref{d} that for every $p\in[1,\infty]$,

\begin{equation}\label{uniform}
||e^{-z\Delta}||_{p\to p}\lesssim \left(\frac{|z|}{\mathfrak{Re}(z)}\right)^\nu,\quad \mathfrak{Re}(z)>0.
\end{equation}
We now assume that $\mathfrak{Re}\left(\frac{\gamma}{z}\right)\geq1$, write $z=t+is$, then

$$F(z)=\langle f_2,\nabla e^{-\frac{t}{2}\Delta}e^{-(\frac{t}{2}+is)\Delta} f_1\rangle.$$
By \eqref{uniform} and \eqref{grad_r_s}, we get as in the proof of Proposition \ref{fgrad_off_equiv}, that for $\mathfrak{Re}\left(\frac{\gamma}{z}\right)\geq1$,

$$\begin{array}{rcl}
|F(z)| &\leq & A\left(\frac{|z|}{\mathfrak{Re}(z)}\right)^\nu r^{-1}V(x,r)^{-\alpha}\left(\frac{\mathfrak{Re}(z)}{\gamma}\right)^{-(\nu\alpha+1)/2}||f||_{r}||X||_{s'} \\\\
&\leq & Ar^{-1-2\nu}V(x,r)^{-\alpha} |z|^\nu \left(\frac{\mathfrak{Re}(z)}{\gamma}\right)^{-(\nu\alpha+2\nu+1)/2}||f||_{r}||X||_{s'}
\end{array}$$
Hence, \eqref{contract} is proved. According to \cite[Proposition 2.4]{CS} with the choice $\exp(g(z))=z^\nu$ for $\mathfrak{Re}(z)\geq0$, one gets for $\mathfrak{Re}\left(\frac{\gamma}{z}\right)\geq1$,

$$\begin{array}{rcl}
|F(z)|&\lesssim & A |z|^\nu r^{-1-2\nu}V(x,r)^{-\alpha}\left(\frac{\gamma}{|z|}\right)^{\alpha \nu+2\nu+      1}\exp\left(-\mathfrak{Re}\left(\frac{\gamma}{z}\right)\right)||f||_{r}||X||_{s'}\\\\
&\lesssim & Ar^{-1} V(x,r)^{-\alpha}\left(\frac{\gamma}{|z|}\right)^{\alpha \nu+\nu+1}\exp\left(-\mathfrak{Re}\left(\frac{\gamma}{z}\right)\right)||f||_{r}||X||_{s'}
\end{array}$$
From this point, the rest of the proof is identical to that of Proposition \ref{fgrad_off_equiv}, and is skipped.

$\Box$

\medskip

In the proof of Theorem \ref{forms}, the following lemma will be key:

\begin{Lem}\label{form1}

Let $(M,\mu)$ be a complete weighted Riemannian manifold, $k\geq1$ be an integer, and $\alpha\in L^2(\Lambda^kT^*M)\cap C^\infty(\Lambda^kT^*M)$. Let $\Pi_k$ be the $L^2$ orthogonal projection onto $\mathrm{Ker}_{L^2}(\vec{\Delta}_{k,\mu})$. Then, for every $t\geq 0$,

$$\Pi_k(de^{-t\vec{\Delta}_{k,\mu}}\alpha )=0,$$
and 

$$\Pi_{k-1}(d^*e^{-t\vec{\Delta}_{k,\mu}}\alpha )=0.$$

\end{Lem}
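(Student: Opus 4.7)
The plan is to prove both identities by duality against an arbitrary element of the relevant $L^2$ kernel, combined with the classical fact that on a complete weighted Riemannian manifold every $L^2$ harmonic form is automatically $d$-closed and $d^*_\mu$-closed. That closed/co-closed property is established by the usual cutoff argument: if $\omega \in \mathrm{Ker}_{L^2}(\vec{\Delta}_{j,\mu})$, one tests $0 = \langle \vec{\Delta}_{j,\mu}\omega, \phi_R^2\omega\rangle$ with $\phi_R$ a Lipschitz cutoff equal to $1$ on $B(x_0,R)$ satisfying $|\nabla \phi_R| \lesssim 1/R$, expands both $\langle d\omega, d(\phi_R^2\omega)\rangle$ and $\langle d^*_\mu\omega, d^*_\mu(\phi_R^2\omega)\rangle$ via Leibniz, absorbs the cross terms by Cauchy--Schwarz and Young, and lets $R\to\infty$ using $\omega\in L^2$; one obtains $\|d\omega\|_2^2 + \|d^*_\mu\omega\|_2^2 = 0$.

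Granting this, fix $t>0$ and set $\beta_t := e^{-t\vec{\Delta}_{k,\mu}}\alpha$. Because the semigroup sends $L^2$ into the domain of $\vec{\Delta}_{k,\mu}$, and on a complete manifold the form domain of $\vec{\Delta}_{k,\mu}$ coincides with $\mathrm{dom}(d)\cap\mathrm{dom}(d^*_\mu)$ (Gaffney), both $d\beta_t$ and $d^*_\mu\beta_t$ lie in $L^2$. For any $\omega\in\mathrm{Ker}_{L^2}(\vec{\Delta}_{k+1,\mu})$, the integration by parts
$$\langle d\beta_t,\omega\rangle = \langle \beta_t, d^*_\mu\omega\rangle = 0$$
is valid (again by Gaffney, since $\omega\in\mathrm{dom}(d^*_\mu)$ with $d^*_\mu\omega=0$), and the right-hand side vanishes by the harmonicity fact. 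As $\omega$ was arbitrary in the kernel, the $L^2$-projection of $d\beta_t$ onto $\mathrm{Ker}_{L^2}(\vec{\Delta}_{k+1,\mu})$ is zero; the case $t=0$ follows by strong continuity, or by approximating $\alpha$ by elements of $\mathrm{dom}(d)$ and using the same integration by parts directly on $\alpha$.

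The second identity is symmetric: for $\eta\in\mathrm{Ker}_{L^2}(\vec{\Delta}_{k-1,\mu})$ one has $d\eta=0$, hence
$$\langle d^*_\mu\beta_t,\eta\rangle = \langle \beta_t, d\eta\rangle = 0,$$
and the same conclusion follows.

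The only real obstacle is rigorously justifying the integration by parts against non-compactly-supported $\omega$, which is exactly Gaffney's theorem on complete manifolds; the extension to the weighted setting is automatic because $e^f$ is smooth and positive, so the cutoff arguments for both $d$ and $d^*_\mu$ proceed verbatim. Everything else is formal manipulation once one knows that $L^2$ harmonic forms are closed and co-closed.
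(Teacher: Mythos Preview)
Your proof is correct and takes a route that is dual to the paper's. The paper sets $\eta=e^{-t\vec{\Delta}_{k,\mu}}\alpha$, then explicitly approximates $d\eta$ by the compactly supported exact forms $d(\varphi_n\eta)$ (with $\varphi_n$ a standard cutoff sequence) to show $d\eta\in\overline{dC_0^\infty}^{L^2}$, and finally invokes the Hodge--de~Rham decomposition to conclude $\Pi_{k+1}(d\eta)=0$. You instead pair $d\eta$ directly against an arbitrary $L^2$ harmonic $(k+1)$-form $\omega$, use that $d^*_\mu\omega=0$ on a complete manifold, and integrate by parts via Gaffney. Both arguments are essentially the same amount of work: the paper's cutoff approximation of $d\eta$ and your cutoff proof that harmonic forms are co-closed are two incarnations of Gaffney's theorem, and the Hodge decomposition the paper quotes is equivalent to the orthogonality you establish by hand. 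Your version is marginally cleaner in that it isolates the analytic content (closed/co-closed for $L^2$ harmonic forms) once and for all, while the paper's version has the minor advantage of producing the stronger statement $d\eta\in\overline{dC_0^\infty}^{L^2}$ rather than just orthogonality to the harmonic space.

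One small remark: at $t=0$ neither argument is quite complete, since $\alpha\in L^2\cap C^\infty$ does not force $d\alpha\in L^2$, so $\Pi_{k+1}(d\alpha)$ need not even be defined; but this is a defect of the statement rather than of either proof, and only $t>0$ is used in the application.
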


\begin{proof}

Let $\eta=e^{-t\vec{\Delta}_{k,\mu}}\alpha$, $\omega=d\eta$ and $\chi=d^*\eta$. Then, $\eta\in L^2$, and $\eta$ is in the domain of $\vec{\Delta}_{k,\mu}$. But since $M$ is complete, one has

$$(\vec{\Delta}_{k,\mu}\eta,\eta)_{L^2}=||d\eta||_2^2+||d^*\eta||_2^2.$$
Hence, $\omega\in L^2$ and $\chi\in L^2$. We claim that $\omega$ belongs to $\overline{dC_0^\infty(\Lambda^kT^*M)}^{L^2}$, and $\chi$ belongs to $\overline{d^*C_0^\infty(\Lambda^kT^*M)}^{L^2}$. To see this, let $p\in M$ and $\{\varphi_n\}_{n\in\mathbb{N}}$ be a sequence of smooth, compactly supported functions on $M$, such that $\varphi_n|_{B(p,n)}=1$, $0\leq \varphi_n\leq 1$, and such that for some constant $C>0$, $|d\varphi_n|\leq C$. Let $\omega_n=d(\varphi_n\eta)$, and $\chi_n=d^*(\varphi_n\eta)$. Since

$$\omega_n=d\varphi_n\wedge \eta+\varphi_n\omega,$$
and

$$\chi_n=\varphi_n\chi-\mathrm{int}_{d\varphi_n}\eta,$$
it is easy to see that $\omega_n\to \omega$ and $\chi_n\to\chi$ in $L^2$. But $\omega_n$ belongs to $dC_0^\infty(\Lambda^kT^*M)$, and $\chi_n$ belongs to $d^*C_0^\infty(\Lambda^kT^*M)$, hence the claim is proved. 

Since $M$ is complete, the Hodge-De Rham decomposition writes:

$$L^2(\Lambda^kT^* M)=\mathrm{Ker}_{L^2}(\vec{\Delta}_{k,\mu})\oplus_\perp \overline{dC_0^\infty(M)}^{L^2}\oplus_\perp \overline{d^*C_0^\infty(\Lambda^2T^*M)}^{L^2}.$$
The projection $\Pi_k$ is precisely the projection on the first factor of the above orthogonal decomposition. Since $\omega$ belongs to $\overline{dC_0^\infty(\Lambda^kT^*M)}^{L^2}$, it follows that 

$$\Pi_k\omega=0.$$
Using the Hodge-De Rham decomposition for $L^2(\Lambda^{k-1}T^* M)$, we see in a similar way that $\Pi_{k-1}\chi=0.$

\end{proof}

\noindent{\em Proof of Theorem \ref{forms}:}

The assumptions together with Theorem \ref{pi_heat} imply in particular that for all $q\leq r\leq s\leq q'$,

\begin{equation}\label{Eq1}
\sup_{t>0}||V_{\sqrt{t}}^{\frac{1}{r}-\frac{1}{s}}e^{-t\vec{\Delta}_{k+1,\mu}}(I-\Pi_{k+1})||_{r\to s}<+\infty
\end{equation}
and

\begin{equation}\label{Eq2}
\sup_{t>0}||V_{\sqrt{t}}^{\frac{1}{r}-\frac{1}{s}}e^{-t\vec{\Delta}_{k,\mu}}(I-\Pi_{k})||_{r\to s}<+\infty.
\end{equation}
We first claim that for every $s\in [2,q']$,

\begin{equation}\label{Eq3}
\sup_{t>0}||V_{\sqrt{t}}^{\frac{1}{2}-\frac{1}{s}}\sqrt{t}de^{-t\vec{\Delta}_{k,\mu}}||_{2\to s}<+\infty,
\end{equation}
and

\begin{equation}\label{Eq4}
\sup_{t>0}||V_{\sqrt{t}}^{\frac{1}{2}-\frac{1}{s}}\sqrt{t}d^*e^{-t\vec{\Delta}_{k,\mu}}||_{2\to s}<+\infty.
\end{equation}
Let $\alpha\in C_0^\infty(\Lambda^{k}T^*M)$. According to \eqref{Eq1},

\begin{equation}\label{Eq5}
||V_{\sqrt{t}}^{\frac{1}{2}-\frac{1}{s}}e^{-t\vec{\Delta}_{k+1,\mu}}(I-\Pi_{k+1})\left[\sqrt{t}de^{-t\vec{\Delta}_{k,\mu}}(I-\Pi_{k})\alpha\right]||_s\lesssim ||\sqrt{t}de^{-t\vec{\Delta}_{k,\mu}}(I-\Pi_{k})\alpha||_2.
\end{equation}
But one has

$$\begin{array}{rcl}
e^{-t\vec{\Delta}_{k+1,\mu}}(I-\Pi_{k+1})\left[de^{-t\vec{\Delta}_{k,\mu}}(I-\Pi_{k})\alpha\right]&=&e^{-t\vec{\Delta}_{k+1,\mu}}(I-\Pi_{k+1})\left[de^{-t\vec{\Delta}_{k,\mu}}\alpha\right]\\\\
&=&de^{-2t\vec{\Delta}_{k,\mu}}\alpha,
\end{array}$$
where in the first equality we have used that $e^{-t\vec{\Delta}_{k,\mu}}\Pi_{k}=\Pi_{k}$ and $d\Pi_{k}=0$, and in the second inequality we have used that $\Pi_{k+1} de^{-t\vec{\Delta}_{k,\mu}}\alpha=0$ according to Lemma \ref{form1}. On the other hand, since $de^{-t\vec{\Delta}_{k,\mu}}\Pi_{k}=e^{-t\vec{\Delta}_{k+1,\mu}}d\Pi_{k}=0$, one has

$$\begin{array}{rcl}
||\sqrt{t}de^{-t\vec{\Delta}_{k,\mu}}(I-\Pi_{k})\alpha||_2&=& ||\sqrt{t}de^{-t\vec{\Delta}_{k,\mu}}\alpha||_2\\\\
&\leq& \sqrt{t}\left(||de^{-t\vec{\Delta}_{k,\mu}}\alpha||_2^2+||d^*_{\mu} e^{-t\vec{\Delta}_{k,\mu}}\alpha||_2^2\right)^{1/2}\\\\
&\leq& \langle t\vec{\Delta}_{k,\mu}e^{-t\vec{\Delta}_{k,\mu}}\alpha,\alpha\rangle^{1/2}\\\\
&\lesssim& ||\alpha||_2.
\end{array}$$
Therefore, according to \eqref{Eq5}, one obtains

$$||V_{\sqrt{t}}^{\frac{1}{2}-\frac{1}{s}}\sqrt{t}de^{-2t\vec{\Delta}_{k-1,\mu}}\alpha||_s\lesssim ||\alpha||_2,$$
and using the doubling property, we get \eqref{Eq3}. The proof of \eqref{Eq4} is completely similar, using \eqref{Eq2} instead of \eqref{Eq1}, and is skipped. 

As a consequence of \eqref{Eq3} and \eqref{Eq4}, one concludes that $(\vec{G}_{2,s})$ holds, for all $s\in [2,q']$. By a duality argument, using the hypothesis on the kernel of harmonic forms of degree $k-1$, one concludes that $(\vec{G}_{r,2})$ holds too, for all $r\in [q,2]$. According to Proposition \ref{fgrad_off_equiv}, one concludes first that the off-diagonal versions $(\vec{G}^{\mathrm{off}}_{2,s})$ and $(\vec{G}^{\mathrm{off}}_{r,2})$ hold, for any $q\leq r\leq s\leq q'$, and then that for every $p\in [q,q']$, \eqref{fGp} holds. By interpolation between \eqref{fGp} and $(\vec{G}_{2,s})$, $(\vec{G}_{r,2})$, $q\leq r\leq 2\leq s\leq q'$, $p\in [q,q']$, one obtains that \eqref{fgrad_r_s} holds for all $q\leq r\leq s\leq q'$. The proof is complete.

\cqfd

{\em Proof of Theorem \ref{gradient}:}

\medskip

The first part of Theorem \ref{gradient}, that is the part for $q>1$, is a particular case of Theorem \ref{forms}. It remains to explain why \eqref{G} holds true, in the case $q=1$. First of all, in this case, according to Theorem \ref{forms}, one gets $(G_{1,\infty})$. According to Proposition \ref{grad_off_equiv}, $(G^{\mathrm{off}}_{1,\infty})$ follows. That is,

$$V(x,\sqrt{t})||\chi_{B(x,\sqrt{t})}\sqrt{t}\nabla e^{-t\Delta}\chi_{B(y,\sqrt{t})}||_{1\to \infty}\lesssim e^{-\frac{d^2(x,y)}{Ct}},\quad t>0,\,x,y\in M.$$
But using the well-known fact that $L^1\to L^\infty$ estimates are equivalent to pointwise estimates on the kernel, one concludes that \eqref{G} holds. 

\cqfd

\section{$L^2$-cohomologies and the condition \eqref{Ker}}

In this section, we explain how the assumption \eqref{Ker} is related to the $L^2$ cohomology. The classical De Rham theorem asserts that, on a smooth compact Riemannian manifold, the cohomology with real coefficients can be computed using differential forms, and more precisely that $H^k(M,\R)$ is isomorphic to the quotient

$$\frac{\{\alpha\in C^\infty(\Lambda^k T^*M)\,;\,d\alpha=0\}}{dC^\infty(\Lambda^{k-1}T^*M)}.$$
Furthermore, by the Hodge theorem if $M$ is compact without boundary, then the space of ($L^2$) harmonic $k$-forms is isomorphic to $H^k(M,\R)$.
However, if $M$ is non-compact, in general these cohomology spaces have infinite dimension. Thus, alternative cohomologies have to be defined. We recall the definition of two cohomologies for $M$, the reduced $L^2$ cohomology and the cohomology with compact support, for more details and references see \cite{C6}. Firstly, $H_c^k(M)$, the space of {\em cohomology with compact support} of degree $k$, is defined as the quotient of 

$$Z_c^k(M)=\mathrm{Ker}\{d:C_0^\infty(\Lambda^k T^*M)\}$$
by 

$$B_c^k(M)=dC_0^\infty(\Lambda^k T^* M).$$
Secondly, $H_{(2)}^k(M)$, the space of $L^2$ {\em reduced cohomology} of degree $k$, is defined as the quotient of

$$Z_{(2)}^k(M)=\{\alpha\in L^2(\Lambda^k  T^*M)\,;\,d\alpha=0\}$$
by 

$$B_{(2)}^k(M)=\overline{dC_0^\infty(\Lambda^k T^*  M)}^{L^2}.$$
Note that in the definition of $Z_{(2)}^k(M)$, the equation $d\alpha=0$ is intended in the weak sense. Given that 

$$B_c^k(M)\subset B_{(2)}^k(M),$$
there is a natural map from $H_c^k(M)$ into $H_{(2)}^k(M)$, that associates to each $[\eta]_c$, $\eta\in C_0^\infty$, its class $[\eta]_{(2)}$ in $L^2$ reduced cohomology. In \cite{C6}, one finds geometric conditions ensuring that the map $H_c^1(M)\to H_{(2)}^1(M)$ is injective, for example it is the case if all the ends of $M$ are non-parabolic. In the spirit of Hodge theorem, the spaces of $L^2$ reduced cohomology have an interpretation in terms of harmonic forms. Let $\mathscr{H}^k(M)$ denotes the space of $L^2$ harmonic $k$-forms, i.e.

$$\mathscr{H}^k(M)=\{\alpha\in L^2(\Lambda^k T^* M)\,;\,d\alpha=0,\,d^*\alpha=0\}.$$ 
It is well-known that if $M$ is complete, then there is a Hodge decomposition:

$$L^2(\Lambda^k T^* M)=\mathscr{H}^k(M)\oplus_\perp \overline{dC_0^\infty(\Lambda^{k-1}T^*M)}^{L^2}\oplus_\perp \overline{d^*C_0^\infty(\Lambda^{k+1} T^*M)}^{L^2},$$
and that moreover

$$\mathscr{H}^k(M)=\{\alpha\in \Lambda^k T^*M\,;\,\vec{\Delta}_k\alpha=0\},$$
where $\vec{\Delta}_k=dd^*+d^*d$ is the Hodge Laplacian acting on $k$-forms. The Hodge decomposition readily implies that

$$\mathscr{H}^k(M)\simeq H^k_{(2)}(M).$$
In the case $M$ has a boundary, additional boundary conditions for harmonic forms have to be introduced in order that the above equality holds (see \cite{C6}). Moreover, it turns out that all the above-mentioned results concerning $L^2$-cohomology (including the Hodge decomposition) also hold in the weighted setting (see \cite{Bue}).

\bigskip

\noindent {\em Proof of Theorems \ref{Ker-Sob} and \ref{Ker-Sob_k}:} Under the assumptions of Theorem \ref{Ker-Sob}, the volume estimates

\begin{equation}\label{vol_eucl}
V(x,r)\simeq r^n,\,\forall r>0
\end{equation}
hold, according to \cite{C8}. It is thus clear that Theorem \ref{Ker-Sob} follows from Theorem \ref{Ker-Sob_k}. So, it is enough to prove Theorem \ref{Ker-Sob_k}. 

According to \cite[Proposition 6.7]{D3}, under our assumptions the potential $W(x)=||\mathscr{R}_{k}(x)||_x$ belongs to the Kato class at infinity, i.e.

$$\lim_{r\to\infty} \sup_{x\in M}\int_{M\setminus B(x_0,r)}G(x,y)W(y)\,dy=0.$$
Let us fix a radius $R$ such that

$$\sup_{x\in M}\int_{M\setminus B(x_0,R)}G(x,y)W(y)\,dy<1,$$
and denote

$$\mathcal{R}_{-,0}(x)=\mathbf{1}_{B(x_0,R)}(x)\mathscr{R}_{k}(x),$$
and

$$\mathcal{R}_{-,\infty}(x)=\mathbf{1}_{M\setminus B(x_0,R)}(x)\mathscr{R}_{k}(x).$$
As in \cite{CDS}, let 

$$\mathcal{H}=\nabla^*\nabla+\mathcal{R}_+-\mathcal{R}_{-,\infty},$$
and 

$$\mathcal{W}_0=\mathcal{R}_{-,0},$$
Let $\omega\in \mathrm{Ker}_{L^2}(\vec{\Delta})$. Then, by Lemma \ref{minig},

$$\omega=-\mathcal{H}^{-1}\mathcal{W}_0\omega.$$
By \cite[Proposition 2.3]{CDS}, the semi-group $e^{-t\mathcal{H}}$ has Gaussian heat kernel estimates. The volume assumption \eqref{vol_eucl} on $M$ implies by integration that the Green operator $\mathcal{H}^{-1}$ has a kernel $h(x,y)$ satisfying:

$$||h(x,y)||_{y,x}\lesssim d(x,y)^{2-n},\quad \forall x,\,y,\,d(x,y)\geq1.$$
Thus, since $\mathcal{W}_0$ is compactly supported,

\begin{equation}\label{asym-form}
|\omega(x)|\lesssim d(x_0,x)^{2-n}.
\end{equation}
By the volume assumption \eqref{vol_eucl}, we deduce from the above estimate that $\omega$ is in $L^q$ for all $q\in (\frac{n}{n-2},2)$. We claim that one can improve this to get the interval $(\frac{n}{n-1},2)$. The Bochner formula 

$$\vec{\Delta}_{k}\omega=-\mathrm{Tr}\nabla^2\omega+\mathscr{R}_{k}\omega$$
implies, since $\omega$ is harmonic, that

\begin{equation}\label{B1}
-\frac{1}{2}\Delta |\omega|^2=|\nabla \omega|^2+(\mathscr{R}_k\omega,\omega).
\end{equation}
Recall the refined Kato inequality for harmonic $k$-forms (see \cite[Theorem 3.8]{CiZ}),

$$|\nabla \omega|^2\geq \kappa|\nabla |\omega||^2,$$
with 

$$\kappa=\left\{ \begin{array}{lcl}
\frac{n-k+1}{n-k},\, 1\leq k\leq \frac{n}{2}\\\\
\frac{k+1}{k},\, \frac{n}{2}\leq k\leq n-1
\end{array}\right.$$
Thus, by \eqref{B1},

\begin{equation}\label{B2}
|\omega|\Delta |\omega|+(\mathscr{R}_k\omega,\omega)\leq (1-\kappa)|\nabla |\omega||^2.
\end{equation}
Let $\alpha>0$,  $\varphi_\alpha:=|\omega|^{\alpha}$, and $V_\alpha(x):=\alpha ||\mathscr{R}_k(x))||_{x,x}$. Then one gets from \eqref{B2} that

$$(\Delta-V_\alpha)\varphi_\alpha\leq \alpha \left((1-\kappa)-(\alpha-1)\right)|\omega|^{\alpha-2}|\nabla|\omega||^2.$$
Hence, for $\alpha=2-\kappa$, denoting $V=V_{2-\kappa}$ and $\varphi=\varphi_{2-\kappa}$, we get

\begin{equation}\label{B3}
(\Delta-V)\varphi\leq 0.
\end{equation}
Let $V_\infty(x):=\mathbf{1}_{B(x_0,r)}(x)V(x)$, where the radius $r>0$ is chosen so that

$$\sup_{x\in M}\int_{M\setminus B(x_0,r)}G(x,y)V(y)\,dy<1,$$
which is possible since $V$ belongs to the Kato class at infinity. Then, $\varphi$ is a subsolution of $P=\Delta-V_\infty$ outside a compact set, and furthermore by \eqref{asym-form},

$$\lim_{x\to\infty}\varphi(x)=0.$$
We claim that there exists a constant $C>0$ such that

\begin{equation}\label{min-growth}
\varphi(x)\leq C G_P(x,x_0),\quad d(x,x_0)\geq 2r,
\end{equation}
where $G_P$ is the positive minimal Green function of $P$. Note that by \cite[Proposition 2.3]{CDS}, the operator $P$ is non-negative and the semi-group $e^{-tP}$ has Gaussian heat kernel estimates, thus by the volume assumption \eqref{vol_eucl}, $G_P$ exists and is finite, and one has

\begin{equation}\label{Green_est}
G_P(x,y)\lesssim d(x,y)^{2-n}.
\end{equation}
According to \cite[Theorem 3.2]{D3}, there exists $h$ such that $(\Delta-V_\infty)h=0$ and for some constant $C>0$,

$$C^{-1}\leq h\leq C.$$
Let $U$ be a smooth domain such that $B(x_0,r)\subset U\subset B(x_0,2r)$.  Denote $w(x)=G_P(x,x_0)$, and let $\{\Omega_n\}_{n\in\mathbb{N}}$ be an exhaustion of $M$ by smooth domains, such that $U\subset \Omega_0$. Let $w_n$ be the Green function of $P$ on $\Omega_n$ with pole at $x_0$ and Dirichlet boundary conditions on $\partial \Omega_n$. By the maximum principle, the sequence $\{w_n\}_{n\in\mathbb{N}}$ is increasing, and converges pointwise to $w$. Denote

$$c=\max_{x\in\partial U}\frac{\varphi(x)}{w(x)},$$
and let $\varepsilon>0$. Since $h\simeq 1$ and $\varphi$ tends to zero at infinity, there exists $N\in\mathbb{N}$ such that, for $n\geq N$,

$$\varphi(x)\leq \varepsilon h(x),\quad\forall x\in \partial \Omega_n.$$
By the maximum principle, we thus obtain that for every $n\geq N$,

$$\varphi(x)\leq (c+1)w_n(x)+\varepsilon h(x),\quad\forall x\in \Omega_n\setminus U.$$
Letting $n\to\infty$, we obtain

$$\varphi(x)\leq (c+1)w(x)+\varepsilon h(x),\quad \forall x\in M\setminus U.$$
Letting $\epsilon\to 0$, we get \eqref{min-growth} with $C=c+1$.

Now, \eqref{min-growth} and the estimate \eqref{Green_est} of $G_P$ imply that

$$|\omega(x)|\lesssim d(x,x_0)^{\frac{2-n}{2-\kappa}},$$
which, taking into account the fact that $M$ has Euclidean volume growth, easily yields

$$\omega \in L^q, \quad\forall q\in (q^*,2).$$
Thus, we have proved that

$$\mathrm{Ker}_{L^2}(\vec{\Delta})\subset L^q,\quad\forall q\in \left(q^*,2\right).$$
\cqfd

In the rest of the article, we focus on the gradient estimates of the scalar heat kernel. As explained in the introduction, our goal will be to improve the result of Corollary \ref{grad-Sob}, under additional assumptions on the geometry of the manifolds at infinity. This will be done in the next section. For the moment, we content ourselves to present general results pertaining to the condition \eqref{Ker} for the Hodge Laplacian on $1$-forms. We first present a kind of converse to Theorem \ref{gradient}:

\begin{Pro}\label{converse}

Assume that $(M,d,\mu)$ is a complete, non-parabolic weighted Riemannian manifold satisfying \eqref{d} and \eqref{P}. Assume that \eqref{Gp} holds for some $p\in (2,\infty]$, and that every class $[\alpha]_{(2)}$ in the cohomology space $H_{(2)}^1(M)$ has a representative $\alpha$ in $L^2\cap L^{p'}$. Then, for every $q\in (p',2)$, \eqref{Ker} for $\mathcal{L}=\vec{\Delta}_{\mu}$, the weighted Hodge Laplacian on $1$-forms, holds, i.e.

$$\mathrm{Ker}_{L^2}(\vec{\Delta}_\mu )\subset L^q.$$

\end{Pro}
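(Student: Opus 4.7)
The plan is to represent the $L^2$-harmonic form $\omega\in\mathrm{Ker}_{L^2}(\vec{\Delta}_\mu)$ via the given representative $\alpha\in L^2\cap L^{p'}$ of its reduced cohomology class, and then to control the associated Hodge projection using Riesz transform bounds. First, since $\omega-\alpha\in\overline{dC_0^\infty(M)}^{L^2}$ and the $L^2$ Hodge--de Rham decomposition $\alpha=\alpha_h+\alpha_d+\alpha_{d^*}$ is orthogonal, the difference $\omega-\alpha$ being purely in the closure of the exact part forces $\alpha_h=\omega$ and $\alpha_{d^*}=0$. Hence $\omega=\alpha-P_d\alpha$, where $P_d$ denotes the orthogonal projection of $L^2\Lambda^1$ onto $\overline{dC_0^\infty(M)}^{L^2}$. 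Because $M$ is non-parabolic, $\mathrm{Ker}_{L^2}(\Delta_\mu)=\{0\}$ and the Riesz transform $R:=d\Delta_\mu^{-1/2}$ realizes an isometric isomorphism $L^2\to\overline{dC_0^\infty(M)}^{L^2}$; consequently $P_d=RR^*$ with $R^*=\Delta_\mu^{-1/2}d_\mu^*$, and one obtains the key identity
\[
\omega=(I-RR^*)\alpha \quad\text{in } L^2.
\]

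Next, I would establish the boundedness of $R$ and $R^*$ on $L^q$ for $q\in(p',2)$. The hypotheses \eqref{d} and \eqref{P} imply, via Saloff-Coste's theorem, the two-sided Li--Yau bound \eqref{LY}, and in particular \eqref{UE}. By the classical Coulhon--Duong theorem (which requires only \eqref{d} and \eqref{UE}), $R\colon L^r\to L^r$ is bounded for every $r\in(1,2]$. On the other hand, by the main result of \cite{ACDH}, the hypotheses \eqref{Gp} and \eqref{P} give that $R$ is bounded on $L^r$ for $r\in(2,p)$; the endpoint $p=\infty$ is handled by using \eqref{Ginf} together with the fact that \eqref{Ginf} implies \eqref{Gp} for every $r\in(2,\infty)$. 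By duality, the adjoint $R^*$ is bounded on $L^r$ for every $r\in(p',\infty)$.

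To conclude, fix $q\in(p',2)$. Then $R\colon L^q\to L^q$ is bounded (since $q\in(1,2]$) and $R^*\colon L^q\to L^q$ is bounded (since $q>p'$), so the composition $RR^*$ is bounded on $L^q$. Lyapunov interpolation between $L^{p'}$ and $L^2$ gives $\alpha\in L^q$, whence $RR^*\alpha\in L^q$; combining with the $L^2$-identity of the first paragraph, one has $\omega=\alpha-RR^*\alpha$ almost everywhere, and therefore $\omega\in L^q$. The main obstacle is the rigorous justification of the Helmholtz-type formula $P_d=d\Delta_\mu^{-1}d_\mu^*$ in the weighted non-parabolic setting, where $\Delta_\mu^{-1}d_\mu^*$ must be interpreted through the isomorphism $\Delta_\mu\colon\dot{H}_0^1\to(\dot{H}_0^1)^*$ associated with the Dirichlet form rather than as a classical $L^2$ inverse; a secondary care point is the endpoint case $p=\infty$, where one must rely on \eqref{Ginf} to ensure that the Riesz transform is bounded on the whole interval $(p',2)$.
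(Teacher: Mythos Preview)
Your proposal is correct and follows essentially the same route as the paper's proof. Both arguments write the harmonic form as $\omega=\alpha-\mathscr{P}\alpha$ where $\mathscr{P}=RR^*=(d\Delta_\mu^{-1/2})(d\Delta_\mu^{-1/2})^*$ is the Hodge projector onto exact forms, invoke \cite{CD1} and \cite{ACDH} to get $L^s$-boundedness of the Riesz transform on $(1,p)$, and conclude by composition and interpolation that $\omega\in L^q$ for $q\in(p',2)$; your caveats about the functional-analytic interpretation of $P_d$ and the endpoint $p=\infty$ are well-placed but do not constitute gaps.
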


\begin{Rem}
{\em 

\begin{enumerate}

\item[(i)] The assumption on the cohomology classes in particular holds if the map $H_c^1(M)\to H^1_{(2)}(M)$ is surjective. This is true for example if $M$ is Euclidean outside a compact set (see Theorem \ref{coho-comp}).

\item[(ii)] As the proof will show, instead of assuming \eqref{P} and \eqref{Gp}, one can assume instead that the Riesz transform is bounded on $L^p$ and $L^{p'}$.

\end{enumerate}
}
\end{Rem}

\begin{proof}

Let $\omega\in \mathrm{Ker}_{L^2}(\vec{\Delta})=\mathscr{H}^1(M)$, then the assumption made on the cohomology classes implies that there exists $\eta \in L^2\cap L^{p'}$ such that

$$[\eta]_{(2)}=[\omega]_{(2)}.$$
Equivalently, if one lets $\varphi=\eta-\omega$, then

$$\varphi \in \overline{dC_0^\infty(M)}^{L^2}.$$
So,

$$\eta=\omega+\varphi$$
is the Hodge decomposition of $\eta$, and this implies that

$$\mathscr{P}\eta=\varphi,$$
where 

$$\mathscr{P}=d\Delta^{-1}d^*$$
is the Hodge projector onto the space of exact forms $\overline{dC_0^\infty(M)}^{L^2}.$ Notice that one can write $\mathscr{P}$ as

$$\mathscr{P}=(d\Delta^{-1/2})(d\Delta^{-1/2})^*.$$
According to \cite{CD1} and \cite{ACDH}, the assumptions of Proposition \ref{converse} imply that the Riesz transform $d\Delta^{-1/2}$ is bounded on $L^s$ for all $s\in (1,p)$. Thus, $\mathscr{P}$ is bounded on $L^q$ for all $q\in (p',2)$. Since $\eta\in L^q$ for all $q\in (p',2)$, one obtains that

$$\varphi=\mathscr{P}\eta \in L^q,\quad \forall q\in (p',2).$$
Thus, $\omega\in L^q$, for all $q\in (p',2).$ This shows that \eqref{Ker} holds for all $q\in (p',2)$.

\end{proof}
We now further investigate the validity of assumption \eqref{Ker} for the Hodge Laplacian on $1$-forms. We start by recalling a result which follows directly from the proof of \cite[Lemma 3.4]{C5}, even if it was not stated explicitly there:

\begin{Lem}\label{Car}

Let $M$ be a complete Riemannian manifold satisfying the Poincar\'e inequalities \eqref{P}, and whose volume growth is Euclidean at infinity: there exists $n>2$ such that

$$V(x,t)\simeq t^n,\quad \forall x\in M,\,\forall t\geq 1.$$
Let $p$ be a point in $M$, and let $u$ be a smooth function on $M$ such that $du$ is in $L^2$. Then, there exist two constants $c\in \R$, $C>0$, such that, for every $r\geq 1$,

$$\int_{B(p,r)}|u-c|^2\leq Cr^2.$$

\end{Lem}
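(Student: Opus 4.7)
The plan is to exploit the Poincaré inequality at every scale, and show that the averages $c_r := u_{B(p,r)}$ stabilize as $r \to \infty$; the limit will be the constant $c$.

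First I would apply the Poincaré inequality \eqref{P} on the ball $B(p,r)$ to obtain
\[
\int_{B(p,r)} |u-c_r|^2 \leq C r^2 \int_{B(p,r)} |\nabla u|^2 \leq C r^2 E,
\]
where $E := \int_M |\nabla u|^2 < \infty$. This already gives the estimate that we want, except that the additive constant is $c_r$ rather than a fixed $c$ independent of $r$.

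Next I would control how $c_r$ varies with $r$. By Cauchy--Schwarz, the Euclidean volume growth, and the Poincaré inequality applied on $B(p,2r)$,
\[
|c_{2r}-c_r| \leq \frac{1}{V(p,r)} \int_{B(p,r)} |u - c_{2r}|
\leq V(p,r)^{-1/2} \left(\int_{B(p,2r)} |u-c_{2r}|^2\right)^{1/2}
\lesssim r^{-n/2}\cdot r \cdot E^{1/2}.
\]
Because $n>2$, the exponent $1-n/2$ is negative, so for any $r\geq 1$ and any $k\in\mathbb{N}$,
\[
|c_{2^k r}-c_r| \leq \sum_{j=0}^{k-1} |c_{2^{j+1}r}-c_{2^j r}| \lesssim \sum_{j=0}^{k-1} (2^j r)^{1-n/2} \lesssim r^{1-n/2}.
\]
Hence $c_r$ is Cauchy as $r\to\infty$ and converges to some $c\in\mathbb{R}$, with the quantitative bound $|c_r-c|\lesssim r^{1-n/2}$.

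Finally, combining the two estimates,
\[
\int_{B(p,r)} |u-c|^2 \leq 2\int_{B(p,r)} |u-c_r|^2 + 2 V(p,r)\,|c_r-c|^2 \lesssim r^2 + r^n\cdot r^{2-n} \lesssim r^2,
\]
which is exactly the claim. The only delicate point is step two, where the Euclidean condition $n>2$ is precisely what makes the dyadic sum of increments summable; without Euclidean (or faster) polynomial volume growth the averages $c_r$ could drift to infinity, and the argument would fail.
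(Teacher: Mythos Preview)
Your argument is correct and is the standard one for this type of result: apply Poincar\'e on each ball to bound $\int_{B(p,r)}|u-c_r|^2$, show that the averages $c_r$ form a Cauchy sequence via a dyadic telescoping sum (this is exactly where $n>2$ is used), and then absorb the drift $|c_r-c|$ back into the main estimate using the volume upper bound. The paper itself does not give a proof of this lemma but merely attributes it to the proof of \cite[Lemma 3.4]{C5}; your argument is essentially the one carried out there, so there is nothing to compare.
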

As a consequence of Lemma \ref{Car}, one obtains the following result:

\begin{Lem}\label{harmo-out}

Let $M$ be a complete Riemannian manifold satisfying the Poincar\'e inequalities \eqref{P}, the Sobolev inequality of exponent $n>2$, and whose volume growth is Euclidean at infinity: there exists a constant $C>0$ such that

$$C^{-1}t^n\leq V(x,t)\leq C t^n,\quad \forall x\in M,\,\forall t\geq 1.$$
Let $u$ be a function on $M$ which is harmonic outside a compact set and such that $du$ is in $L^2$. Let $r(x)$ denotes the distance in $M$ to a reference point $p$. Then, there exists a constant $c\in \R$ such that, as $x\to\infty$,

$$u(x)-c=O(r(x)^{2-n}).$$

\end{Lem}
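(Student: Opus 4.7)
The plan is to split $u$ (after normalising it) into an explicit Green potential plus a globally harmonic remainder, and then kill the remainder with a Moser mean-value estimate.

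First, Lemma \ref{Car} provides $c \in \mathbb{R}$ with $\int_{B(p,R)} |u-c|^2 \leq CR^2$ for all $R \geq 1$; replacing $u$ by $u-c$, I may assume $c=0$. Since $u$ is harmonic (hence smooth) outside some compact set $K$, picking a smooth cutoff $\phi$ that vanishes on a neighbourhood of $K$ and equals $1$ outside a slightly larger compact set $K'$, the function $\tilde u := \phi u$ is smooth on $M$, coincides with $u$ outside $K'$, still satisfies $\int_{B(p,R)} \tilde u^2 \leq C R^2$, and has $\tilde f := \Delta \tilde u = u\,\Delta \phi + 2\nabla \phi \cdot \nabla u$ smooth and compactly supported. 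It suffices to show $\tilde u = O(r^{2-n})$ at infinity.

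Next, the Sobolev inequality of exponent $n>2$ implies that $M$ is non-parabolic and that its positive minimal Green function $G$ satisfies $G(x,y) \lesssim d(x,y)^{2-n}$ for $d(x,y) \geq 1$ (integrate the Gaussian heat-kernel bound $p_t(x,y) \lesssim t^{-n/2} e^{-d(x,y)^2/(Ct)}$ over $t>0$). Set
\[
v(x) := \int_M G(x,y)\,\tilde f(y)\,d\mu(y),
\]
so that $\Delta v = \tilde f$ on $M$; compactness of $\operatorname{supp}(\tilde f)$ combined with the Green bound yields $|v(x)| \leq C r(x)^{2-n}$ for $r(x)$ large. Setting $h := \tilde u - v$ produces a function harmonic on all of $M$. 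The pointwise decay of $v$ also gives $\int_{B(p,R)} v^2 \leq C R^2$ for $R\geq 1$ (the binding regime is $2 < n \leq 4$, where $\int_1^R r^{3-n}\,dr \lesssim R^{4-n} \leq R^2$), whence $\int_{B(p,R)} h^2 \leq C R^2$. Because $(M,d,\mu)$ satisfies volume doubling together with the $L^2$ Poincar\'e inequality \eqref{P}, Moser's mean-value inequality applies to the subharmonic function $h^2$ on $B(p,R)$:
\[
\sup_{B(p,R/2)} h^2 \;\leq\; \frac{C}{V(p,R)}\int_{B(p,R)} h^2 \;\leq\; C\,\frac{R^2}{R^n} \;=\; CR^{2-n}.
\]
Fixing any $x \in M$ and letting $R > 2 d(p,x)$ tend to infinity, $|h(x)| \leq C R^{1-n/2} \to 0$ thanks to $n > 2$. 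Hence $h \equiv 0$, so $\tilde u = v = O(r^{2-n})$; reinstating the constant $c$ from the first step gives $u(x) - c = O(r(x)^{2-n})$ at infinity.

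The main obstacle is precisely the last step, namely showing that the harmonic remainder $h$ vanishes identically. A naive integration by parts $0 = \int \chi_R^2\, h\,\Delta h$ only yields a finite bound on $\|dh\|_{L^2(B(R))}$, which is entirely consistent with $h$ being, say, a coordinate function on $\mathbb{R}^n$. The saving observation is that on a manifold with Euclidean volume growth $V(p,R) \simeq R^n$ and an $L^2$ Poincar\'e inequality, the quadratic $L^2$ growth of $h$ delivered by Lemma \ref{Car} is sublinear in a pointwise/Moser sense; Moser iteration converts it into the genuine decay $|h| \lesssim R^{1-n/2}$ on $B(p,R/2)$, which vanishes in the limit exactly because $n > 2$, leaving behind only the Green-potential contribution of order $r^{2-n}$.
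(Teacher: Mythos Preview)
Your proof is correct and follows essentially the paper's approach: both normalise via Lemma~\ref{Car}, represent $u-c$ as a Green potential plus a globally harmonic remainder, and then show the remainder vanishes. The only difference is in that last step---the paper first applies Moser to $(u-c)^2$ on balls disjoint from $K$ to obtain $u\to c$ and then kills the remainder with the maximum principle, whereas you apply Moser directly to $h^2$ on $B(p,R)$ and let $R\to\infty$; your cutoff $\tilde u=\phi u$ also neatly sidesteps any regularity issue for $\Delta u$ inside $K$.
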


\begin{proof}

Denote by $K$ a compact set such that $\Delta u=0$ outside of $K$. One first claims that there exists $c\in \mathbb{R}$ such that

$$\lim_{x\to\infty} u(x)=c.$$
The Sobolev inequality together with the Euclidean volume growth allows one to get the mean value inequality for subharmonic functions by the Moser iteration scheme. This implies that, for every $\delta<1$, and for every ball $B$ of radius $r(B)\geq 1$, disjoint from $K$, 

$$\sup_{x\in\delta B}|u(x)-c|\lesssim \frac{1}{r(B)^n}\int_B |u-c|^2.$$
Assume that $K$ is contained in $B(x_0,R)$ for some $R>0$, and denote $r(x)=d(x,x_0)$. For $x\in M\setminus B(x_0,2R)$, applying the above inequality to $B=B(x,\frac{r(x)}{2})$, on which $u$ is harmonic, one obtains that

$$|u(x)-c|^2\lesssim \frac{1}{r(x)^n}\int_{B(x_0,r(x))}|u-c|^2\lesssim r(x)^{2-n},$$
where in the last inequality we have used Lemma \ref{Car}. Since $n>2$, this implies that

$$\lim_{x\to\infty} u(x)=c.$$
Now, up to substracting a constant to $u$, one can assume that $c=0$. Let

$$g(x)=\int_MG(x,y)\Delta u(y)\,dy=\int_{B(x_0,R)}G(x,y)\Delta u(y)\,dy.$$
Since the Sobolev inequality implies that 

$$G(x,y)\lesssim r(x)^{2-n},\quad\forall y\in B(x_0,R),$$
in order to conclude the proof, it is enough to see that 

$$g\equiv f.$$
But $v:=g-f$ is harmonic on $M$, and

$$\lim_{x\to \infty}v(x)=0,$$
which implies by the maximum principle that $v\equiv0$. Hence, the lemma is proved.

\end{proof}
We can now analyze more precisely the behaviour of harmonic $1$-forms at infinity:

\begin{Pro}\label{harmo}

Assume that $M$ is a complete Riemannian manifold on which the Poincar\'e inequalities \eqref{P} hold, as well as the Sobolev inequality of exponent $n$, and whose volume growth is Euclidean at infinity:

$$V(x,t)\simeq t^n,\quad \forall x\in M,\,\forall t\geq1.$$
Assume also that the natural map

$$H_c^1(M)\to H_{(2)}^1(M)$$
is an surjective. Then, every harmonic $1$-form $\omega$ on $M$ writes

$$\omega=\varphi+df,$$
with

\begin{itemize}

\item[(i)] $\varphi\in C_0^\infty(\Lambda^1 T^* M)$, $d\varphi=0$.

\item[(ii)] $\Delta f=0$ outside a compact set.

\item[(iii)] $f(x)=O(r(x)^{2-n})$ as $x\to \infty$, where $r(x)=d(x,x_0)$ for some fixed $x_0\in M$. In particular,

$$\lim_{x\to\infty}f(x)=0.$$

\item[(iv)] Let $x_0\in M$ be fixed. For $R\gg 1$, the flux of $f$ through $\partial B(x_0,R)$ vanishes, i.e.

$$\int_{\partial B(x_0,R)}\frac{\partial f}{\partial \nu}=0.$$

\end{itemize}

\end{Pro}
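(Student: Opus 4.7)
The plan is to use the surjectivity hypothesis to peel off a compactly supported closed part, then construct a primitive of the remainder via a Green's function argument, and finally invoke Lemma \ref{harmo-out} to get the decay.

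First, since $\omega \in \mathscr{H}^1(M)$ we have a class $[\omega]_{(2)} \in H^1_{(2)}(M)$, and by surjectivity of $H^1_c(M) \to H^1_{(2)}(M)$ there exists $\varphi \in C_0^\infty(\Lambda^1 T^*M)$ closed such that $[\varphi]_{(2)} = [\omega]_{(2)}$. This gives (i), and by definition of equality of classes, $\eta := \omega - \varphi \in \overline{dC_0^\infty(M)}^{L^2}$. So there is a sequence $f_k \in C_0^\infty(M)$ with $df_k \to \eta$ in $L^2$. Under the Sobolev inequality with parameter $n > 2$, we have $\|f_k\|_{\frac{2n}{n-2}} \lesssim \|df_k\|_2$, so $(f_k)$ is bounded in $L^{2n/(n-2)}$; up to a subsequence, $f_k \rightharpoonup f$ weakly in $L^{2n/(n-2)}$, and passing to distributions yields $df = \eta$. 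By elliptic regularity $f$ is smooth, and $f \in L^{2n/(n-2)}$.

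Next, since $\omega$ is harmonic, $d^*\omega = 0$, so $\Delta f = d^* df = d^*\eta = -d^*\varphi$, which is smooth and compactly supported. In particular $\Delta f = 0$ outside $\operatorname{supp}(\varphi)$, giving (ii). The 1-form $df = \eta$ lies in $L^2$, so Lemma \ref{harmo-out} applied to $f$ yields a constant $c \in \mathbb{R}$ such that $f(x) - c = O(r(x)^{2-n})$ as $x \to \infty$. Since $f \in L^{2n/(n-2)}$ on a manifold of infinite volume (Euclidean growth with $n > 2$), one must have $c = 0$: if $c \neq 0$, then outside some large ball $|f|^{2n/(n-2)} \geq \tfrac{1}{2}|c|^{2n/(n-2)}$, contradicting integrability. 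Replacing $f$ by $f$ (no shift needed) gives $f(x) = O(r(x)^{2-n})$, which is (iii).

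Finally, for the flux condition (iv), take $R$ large enough that $\operatorname{supp}(\varphi) \subset B(x_0,R)$. By the divergence theorem,
\[
\int_{\partial B(x_0,R)} \frac{\partial f}{\partial \nu}\,dS = -\int_{B(x_0,R)} \Delta f\,d\mu = \int_{B(x_0,R)} d^*\varphi\,d\mu = \int_M d^*\varphi\,d\mu,
\]
and this last integral vanishes because $\varphi$ is compactly supported (integrating $d^*\varphi = -\operatorname{div}(\varphi^\sharp)$ and applying the divergence theorem on a ball containing the support). The only delicate step is the passage from $\eta \in \overline{dC_0^\infty}^{L^2}$ to an actual primitive $f$ with good global integrability; the trick is that the Sobolev inequality provides uniform $L^{2n/(n-2)}$ control along the approximating sequence, which both produces a weak limit $f$ and—crucially—forces the constant $c$ in the asymptotic expansion to vanish.
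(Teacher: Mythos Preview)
Your proof is correct and follows essentially the same route as the paper: split $\omega=\varphi+\eta$ via surjectivity, produce a primitive $f$ for $\eta$, get (ii) from $d^*\omega=0$, invoke Lemma~\ref{harmo-out} for (iii), and compute the flux via the divergence theorem for (iv). The one minor difference is in how the primitive is obtained and normalized: the paper cites an external result (\cite[Lemma~1.11]{C4}) to produce $f$ with $df=\eta$ and then simply adjusts $f$ by an additive constant to secure (iii), whereas you construct $f$ directly as a weak $L^{2n/(n-2)}$ limit of the approximating sequence via the Sobolev inequality, and then use this integrability to force the constant $c$ from Lemma~\ref{harmo-out} to vanish. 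Your construction is slightly more self-contained; the paper's is slightly shorter since the freedom to add a constant makes the $c=0$ discussion unnecessary.
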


\begin{proof}

Let $\omega\in \mathscr{H}^1(M)$. By elliptic regularity, $\omega$ is smooth. The assumption on the cohomology implies that there is $\varphi\in C_0^\infty(\Lambda^1T^* M)$ and $\eta\in \overline{dC_0^\infty}^{L^2}$ such that

$$\omega=\varphi+\eta.$$
In particular, $\eta$ is smooth, and since the class of $\eta$ is zero in $L^2$ reduced cohomology, \cite[Lemma 1.11]{C4} implies that there exists a smooth function $f$ on $M$, such that

$$\eta=df.$$
Since $d\omega=0$ and $d\eta=0$ (weakly), one has $d\varphi=0$. Also, since $d^*\omega=0$, one has $d^*d f=\Delta f=0$ outside the (compact) support of $\varphi$. This proves (i) and (ii), up to adding a constant to $f$. The claim (iii) follows directly from (ii) and Lemma \ref{harmo-out}.

We finally prove (iv). First, by Green's formula,

$$\int_{B(x_0,R)} \Delta f=\int_{\partial B(x_0,R)}\frac{\partial f}{\partial\nu}.$$
Now, since $d^*\omega=0$ and $\omega=\varphi+df$, one has

$$\Delta f=-d^*\varphi.$$
We take $R>0$ large enough so that the support of $\varphi$ is included in $B(x_0,R)$. Let $X$ be the vector field defined by $X=\varphi^\flat$, then $-d^*\varphi=\div X$, and Stokes theorem implies that

$$\begin{array}{rcl}
\int_{\partial B(x_0,R)}\frac{\partial f}{\partial\nu}&=&-\int_{B(x_0,R)}d^*\varphi\\\\
&=&\int_{B(x_0,R)}\div X\\\\
&=&\int_{\partial B(x_0,R)} \langle X,\nu\rangle \\\\
&=&0,
\end{array}$$
since $X$ vanishes identically on $\partial B(x_0,R)$. This proves (iv).

\end{proof}

\section{Manifolds that are conical at infinity}

In this section, we analyze the asymptotic behaviour of certain harmonic functions on manifolds that have a special cone structure at infinity. We first have the following result concerning the $L^2$-cohomology: 

\begin{Thm}\label{coho-comp}

Let $M\simeq_\infty \mathcal{C}(X)$ be an $n$-dimensional  conical manifold at infinity. Assume that $n>4$ and that $\mathrm{Ric}(X)\geq (n-2)\bar{g}$. Then, the natural map $H^1_c(M)\to H^1_{(2)}(M)$ is an isomorphism.

\end{Thm}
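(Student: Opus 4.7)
The plan is to prove injectivity and surjectivity of the map $H^1_c(M) \to H^1_{(2)}(M)$ separately. For \emph{injectivity}, note that $M$ is conical at infinity with $n > 2$, so $\eqref{Sob}$ holds and $V(x,r) \simeq r^n$ for $r \geq 1$; the (unique) end is therefore non-parabolic, which by the general criterion of \cite{C6} recalled above implies injectivity.

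For \emph{surjectivity}, identify $H^1_{(2)}(M) \simeq \mathscr{H}^1(M)$ via the Hodge--de Rham decomposition on complete manifolds, and fix $\omega \in \mathscr{H}^1(M)$. Let $K \subset M$ be a compact set with $E := M \setminus K$ isometric to $\mathcal{C}(X) \setminus V$. Since $X$ is compact of dimension $n-1$ with $\mathrm{Ric}(X) \geq (n-2)\bar{g} > 0$, the Bochner formula on $X$ gives $\mathscr{H}^1(X) = 0$, and hence $H^1(X) = 0$. As $E$ deformation retracts onto a copy of $X$, we deduce $H^1(E) = 0$, so $\omega|_E$ is exact: $\omega|_E = df$ with $f \in C^\infty(E)$; the condition $d^*\omega = 0$ then forces $\Delta f = 0$ on $E$.

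To globalise $f$, choose a cutoff $\chi \in C^\infty(M)$ supported in $M \setminus K$ and equal to $1$ outside a slightly larger compact set, so that $\hat f := \chi f$ (extended by $0$ across $K$) is smooth on $M$. Then $d\hat f = \chi\, df + f\, d\chi \in L^2(M)$ (the second term being compactly supported), and $\Delta \hat f$ is compactly supported because $f$ is harmonic on $E$ and $\chi \equiv 1$ outside a compact set. Since $M$ is conical at infinity with one end, $\eqref{Sob}$, $\eqref{P}$ and Euclidean volume growth all hold on $M$, so Lemma~\ref{harmo-out} applies directly to $\hat f$ and produces a constant $c$ with $\hat f(x) - c = O(r(x)^{2-n})$ at infinity. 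I expect this decay to be the main obstacle, as one must verify that the hypotheses of Lemma~\ref{harmo-out} (and, beneath it, of Lemma~\ref{Car}) really hold on a conical manifold with a single end; once this is granted, the proof of that lemma runs verbatim via Moser iteration combined with the Green-function representation for $\hat f - c$.

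With this decay in hand, set $\tilde f := \chi(f - c)$, smooth on $M$ with $|\tilde f(x)| \lesssim r(x)^{2-n}$ at infinity. The form $\varphi := \omega - d\tilde f$ is smooth and closed; moreover, on the region $\{\chi \equiv 1\}$ one has $d\tilde f = d(f - c) = df = \omega$, so $\varphi$ vanishes there and is therefore compactly supported, giving a class in $H^1_c(M)$. It remains to check that $d\tilde f \in \overline{dC_0^\infty(M)}^{L^2}$, which follows by approximating $\tilde f$ by $\chi_R \tilde f$, where $\chi_R$ is the standard cutoff of $B(x_0,R)$ with $|d\chi_R| \leq 2/R$. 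The error $(\chi_R - 1)d\tilde f + \tilde f\, d\chi_R$ has first term vanishing in $L^2$ by dominated convergence, while
\[
\|\tilde f\, d\chi_R\|_2^2 \;\lesssim\; R^{-2}\!\int_R^{2R} r^{2(2-n)}\, r^{n-1}\, dr \;\lesssim\; R^{2-n} \;\to\; 0,
\]
very comfortably since $n > 4$. Hence $[\omega] = [\varphi]$ in $H^1_{(2)}(M)$, which establishes surjectivity.
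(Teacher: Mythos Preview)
Your proof is correct and takes a genuinely different, more elementary route than the paper's. For surjectivity, the paper first shows $H^1_{(2)}(M\setminus U)=\{0\}$ by identifying it with the space of absolute harmonic $1$-forms on the conical end and applying Lemma~\ref{coho-out}; it then invokes the long exact sequence of Carron \cite[Th\'eor\`eme 3.1]{C7} relating $H^k(U,\partial U)$, $H^k_{(2)}(M)$ and $H^k_{(2)}(M\setminus U)$, together with excision, and it is this machinery that imposes the hypothesis $n>4$. You instead bypass \cite{C7} entirely: once Bochner on $X$ kills $H^1$ of the end, you construct the decomposition $\omega=\varphi+d\tilde f$ by hand, using Lemma~\ref{harmo-out} for the $O(r^{2-n})$ decay of $\tilde f$ and an explicit cutoff to place $d\tilde f$ in $\overline{dC_0^\infty}^{L^2}$. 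Note that your final estimate $\|\tilde f\, d\chi_R\|_2^2\lesssim R^{-2}\int_R^{2R} r^{3-n}\,dr$ actually tends to zero for every $n>2$, so your argument does not truly need $n>4$; this recovers the observation in the Remark following the theorem that the dimensional restriction is an artefact of the exact-sequence approach. For injectivity, the paper quotes \cite[Th\'eor\`eme 3.3]{C7} while you use the non-parabolic-ends criterion from \cite{C6} mentioned earlier in the text; both are legitimate.
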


\begin{Rem}
{\em 

In the case $M$ is (locally) Euclidean at infinity, i.e. $X$ is (a quotient of) $\R^n$, the result of Theorem \ref{coho-comp} follows from the proof of \cite[Proposition 4.3]{C3} (see also \cite{C2}), without the assumption that $n>4$. Our proof, which is inspired by -and uses results of- \cite{C7}, is different, although the result is most probably already known to experts.

}
\end{Rem}

 We start with the following lemma:

\begin{Lem}\label{coho-out}

Let $p$ be the vertex of the cone $\mathcal{C}(X)$.  Let $u$ be a harmonic function on $\mathcal{C}(X)\setminus B(p,1)$, with $du\in L^2$. If $\frac{\partial u}{\partial r}\Big|_{r=1}=0$, then $u$ is constant.

\end{Lem}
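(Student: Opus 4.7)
The natural approach is separation of variables on the cone. I would begin by letting $\{\phi_k\}_{k\geq 0}$ be a complete $L^2$-orthonormal basis of eigenfunctions of $\Delta_X$ with eigenvalues $0 = \lambda_0 < \lambda_1 \leq \lambda_2 \leq \cdots$, and expand the smooth harmonic function $u$ on $(1,\infty)\times X$ in modes
$$u(r,\theta) = \sum_{k\geq 0} u_k(r)\,\phi_k(\theta).$$
Using the formula $\Delta = -\partial_r^2 - \tfrac{n-1}{r}\partial_r + \tfrac{1}{r^2}\Delta_X$ for the Laplacian of the cone metric $g = dr^2 + r^2\bar g$, each $u_k$ must satisfy the Euler-type ODE
$$r^2 u_k''(r) + (n-1) r\, u_k'(r) - \lambda_k\, u_k(r) = 0,$$
whose general solution is $u_k(r) = A_k r^{\alpha_k^+} + B_k r^{\alpha_k^-}$ with
$$\alpha_k^{\pm} = -\tfrac{n-2}{2} \pm \sqrt{\left(\tfrac{n-2}{2}\right)^2 + \lambda_k}.$$

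Next I would exploit the condition $du\in L^2$. Since $|du|^2 = |\partial_r u|^2 + r^{-2}|\nabla_X u|^2$ and the volume form is $r^{n-1}dr\,dV_X$, Parseval's identity on each slice gives
$$\int_{\mathcal{C}(X)\setminus B(p,1)} |du|^2 \,dV_g = \sum_{k\geq 0}\int_1^{\infty} \Bigl(\,|u_k'(r)|^2 + \tfrac{\lambda_k}{r^2}|u_k(r)|^2\Bigr) r^{n-1}\,dr.$$
For $k=0$ we have $u_0(r) = A_0 + B_0 r^{-(n-2)}$, both of which contribute finitely (this uses $n>2$). For $k\geq 1$, the mode $r^{\alpha_k^+}$ satisfies $\alpha_k^+ > -\tfrac{n-2}{2}$, which forces $\int_1^\infty r^{2\alpha_k^+ + n - 3}\,dr = +\infty$; to keep the total $L^2$ norm finite we must therefore have $A_k = 0$ for all $k\geq 1$. (The coefficients $B_k$ corresponding to $\alpha_k^- < -\tfrac{n-2}{2}$ are always admissible.) Hence
$$u(r,\theta) = A_0 + B_0 r^{-(n-2)} + \sum_{k\geq 1} B_k\, r^{\alpha_k^-}\phi_k(\theta).$$

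Finally I would impose the Neumann condition $\partial_r u|_{r=1} = 0$, which by orthogonality of the $\phi_k$ forces $u_k'(1) = 0$ for every $k$. For $k=0$ this reads $-(n-2)B_0 = 0$, so $B_0 = 0$; for $k\geq 1$ we get $\alpha_k^- B_k = 0$, and since $\alpha_k^- < 0$ we conclude $B_k = 0$. Therefore $u \equiv A_0$ is constant. The only subtlety — and the step that might require care — is justifying that the Parseval identity and term-by-term differentiation are valid here; this is handled by standard elliptic regularity (harmonicity of $u$ makes it smooth, and the Fourier series in $\phi_k$ converges in $C^\infty$ on every compact subset of $(1,\infty)\times X$), so the argument goes through without difficulty.
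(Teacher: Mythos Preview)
Your proof is correct and follows the same separation-of-variables strategy as the paper: expand in eigenfunctions of $\Delta_X$, solve the Euler ODE for each mode, eliminate the growing branches, and then use the Neumann condition at $r=1$ to kill the remaining coefficients.

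The one genuine difference is in how the growing modes $r^{\alpha_k^+}$ ($k\geq 1$) are ruled out. You do this directly from the hypothesis $du\in L^2$ via the Parseval computation of $\int|du|^2$, which is clean and self-contained. The paper instead invokes the earlier Lemma~\ref{harmo-out} (itself built on Poincar\'e inequalities, Moser iteration, and a maximum-principle argument) to first conclude that $u(r,\cdot)\to c$ uniformly as $r\to\infty$, and then uses this decay of $\sum_k|a_k(r)|^2$ to force $\mu_k=0$. Your route is more elementary for the exact cone and avoids importing analytic machinery; the paper's route has the advantage that Lemma~\ref{harmo-out} is stated on general manifolds (not just exact cones) and is reused elsewhere.
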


\begin{proof}

We work in spherical coordinates $(r,x)$, $r>0$, $x\in X$ on $\mathcal{C}(X)$. Denote by $\{\lambda_k\}_{k\in \mathbb{N}}$ the spectrum of the Laplacian on $(X,\bar{g})$, and by $\{\varphi_k\}_{k\in\mathbb{N}}$ an associated complete orthonormal family of eigenfunctions. So,

$$\Delta_{X}\varphi_k=\lambda_k\varphi_k.$$
One has $\lambda_0=0$ and $\varphi_0=const.$ One writes a Fourier-type expansion for the function $u$ in $x$:

$$u(r,x)=\sum_{k=0}^\infty a_k(r)\varphi_k(x),$$
and for every $r\in (0,\infty)$,

$$\int_{S^{n-1}}|u(r,x)|^2\,dx=\sum_{k=0}^\infty |a_k(r)|^2<+\infty.$$
By Lemma \ref{harmo-out}, one can assume that 

$$\lim_{r\to\infty}u(r,x)=0,$$
uniformly in $x\in X$, which implies that

\begin{equation}\label{inf-behave}
\lim_{r\to\infty}\int_{X}|u(r,x)|^2\,dx=\lim_{r\to\infty}\sum_{k=0}^\infty |a_k(r)|^2=0.
\end{equation}
The harmonicity of $u$ outside a compact set is equivalent to the fact that the function $a_k$ is solution of the following ODE for $r\geq1$:

$$\left(-\frac{d^2}{d r^2}-\frac{n-1}{r}\frac{d}{d r}+\frac{\lambda_k}{r^2}\right)a=0.$$
This is an Euler equation, and a basis of solutions is given by the functions $r^{\alpha^\pm_k}$, with

$$\alpha_k^\pm=-\frac{n-2}{2}\pm \sqrt{\left(\frac{n-2}{2}\right)^2+\lambda_k}.$$
So, for all $k\geq1$,

$$a_k(r)=\mu_kr^{\alpha_k^+}+\gamma_k r^{\alpha_k^-}$$
and 

$$a_0(r)=\mu_0+\gamma_0r^{-n+2}.$$
Note that for $k\geq1$, $\alpha_k^+>0$ and $\alpha_k^-<0$, so \eqref{inf-behave} implies that $\mu_k=0$ for all $k\geq0$. Therefore, for all $x\in X$ and $r\geq1$,

$$u(r,x)=\sum_{k=0}^\infty \gamma_kr^{\alpha_k^-}\varphi_k(x).$$
Then,

$$0=\frac{\partial u}{\partial r}\Big|_{r=1}=\sum_{k\geq0}\alpha_k^-\gamma_k\varphi_k(x),\quad\forall x\in X,$$
which implies by uniqueness of the coefficients in the Fourier-type decomposition that $\gamma_k=0$ for all $k\geq0$, i.e. $u\equiv 0$.

\end{proof}

\noindent {\em Proof of Theorem \ref{coho-comp}:}

Let $U\subset M$ and $V_1\subset \mathcal{C}(X_1),\ldots,V_k\subset \mathcal{C}(X_k)$ be open sets such that $M\setminus U$ is isometric to $\sqcup_{i=1}^k\mathcal{C}(X_i)\setminus V_i$. Without loss of generality, we will assume that $V= B(p,1)$, where $p$ is the vertex of the cone. We first show that the $L^2$ reduced cohomology in degree one of the manifold with boundary $M\setminus U$ is trivial. It is well-known (see \cite{C6}) that 

$$H_{(2)}^k(M\setminus U)\simeq \mathscr{H}^k_{abs}(M\setminus U),$$
where $\mathscr{H}^k_{abs}(M\setminus U)$ is the space of $k$-forms $\alpha$ that are $L^2$ on $M\setminus U$, such that $d\alpha=0$, $d^*\alpha=0$, and if $\nu$ denotes a unit normal to $\partial U$, such that for all $k=0,1,\ldots,n$,

$$\mathrm{int}_{\nu}\alpha=0.$$
We claim that for a conical manifold $\mathcal{C}(X)$,

\begin{equation}\label{coho_abs}
\mathscr{H}^1_{abs}(\mathcal{C}(X)\setminus B(p,1))=\{0\}.
\end{equation}
Let $\alpha$ belongs to $\mathscr{H}^1_{abs}(\mathcal{C}(X)\setminus B(p,1))$. The manifold $X$ being a deformation retract of $\mathcal{C}(X)\setminus B(p,1)$, the (unreduced) De Rham cohomologies of $X$ and of $\mathcal{C}(X)\setminus B(p,1)$ are equal. By assumption, the Ricci curvature on $X$ is positive, hence the well-known Bochner method (see e.g. \cite[Theorem 6.56]{Besse}) implies that $\mathscr{H}^1(X)=\{0\}$. By the Hodge theorem, this implies that $H^1(X,\R)=\{0\}$. i.e. every closed form on $\mathcal{C}(X)\setminus B(p,1)$ is exact: therefore, there exists $\beta\in C^\infty(\mathcal{C}(X)\setminus B(p,1))$ such that

$$\alpha=d\beta.$$
Since $d\alpha=d^*\alpha=0$, the function $\beta$ must be harmonic on $\mathcal{C}(X)\setminus B(p,1)$. Furthermore, $\mathrm{int}_{\nu}\alpha=0$ is equivalent to 

$$\frac{\partial \beta}{\partial r}\Big|_{r=R}=0.$$
Taking into account that $\alpha=d\beta$ is $L^2$, we get by Lemma \eqref{coho-out} that $\beta$ has to be constant, and thus $\alpha\equiv0$. Clearly, \eqref{coho_abs} implies that 

$$\mathscr{H}^1_{abs}(M\setminus U)=\{0\}.$$
Therefore,

$$H_{(2)}^1(M\setminus U)=\{0\}.$$
Let us now complete the proof of the theorem. First, a direct computation shows that the assumption on the Ricci curvature of $X$ implies that $M$ has non-negative Ricci curvature outside $U$. Furthermore, as indicated previously, the Sobolev inequality of dimension $n>4$ holds. According to \cite[Th\'eor\`eme 3.1]{C7}, there is a long exact sequence

$$\cdots \to H^k(U,\partial U) \to H_{(2)}^k(M)\to H^k_{(2)}(M\setminus U)\to H^{k+1}(U,\partial U)\to \cdots$$
and since $H^1_{(2)}(M\setminus U)=\{0\}$, the map 

$$H^1(U,\partial U) \to H_{(2)}^1(M)$$
is surjective.
But according to the Excision Lemma (see \cite{God}), the relative cohomology of $(U,\partial U)$ is isomorphic to the cohomology with compact support of the interior of $U$. Hence, the natural map

$$H_c^1(M)\to H_{(2)}^1(M)$$
is surjective.  The injectivity follows, for example, from \cite[Th\'eor\`eme 3.3]{C7}.

\cqfd

\begin{Lem}\label{asym_cone}

Let $n\geq2$, and $X$ be a compact manifold of dimension $n-1$. Denote by $p$ the vertex of the cone $\mathcal{C}(X)$. Let $\lambda_1=\lambda_1(X)>0$ the first non-zero eigenvalue of the Laplacian on $X$, and let

$$q^*=\frac{n}{\frac{n}{2}+\sqrt{\left(\frac{n-2}{2}\right)^2+\lambda_1}}.$$
Let $f$ be a smooth function on $\mathcal{C}(X)$ such that $f$ is harmonic outside a compact set $K$, $d f\in L^2$. Assume that

$$\lim_{r\to\infty}f(r,x)=0,$$
uniformly in $x\in X$, and that, for some $r>0$ such that $K\subset B(p,r)$,

$$\int_{\partial B(p,r)}\frac{\partial f}{\partial \nu}=0.$$
Then, $df\in L^q$, for every $q\in [1,2]$ such that $q>q^*$.

\end{Lem}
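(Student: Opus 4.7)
The plan is to perform separation of variables on the cone end, solve the resulting Euler ODE for each Fourier mode, use the hypotheses to eliminate all growing modes and the constant mode, and then upgrade a cross-sectional $L^2$ estimate to a pointwise gradient estimate via the scale invariance of the cone metric and interior elliptic regularity. Following the proof of Lemma \ref{coho-out}, I would fix an $L^2(X)$-orthonormal basis of eigenfunctions $\{\varphi_k\}_{k\geq 0}$ of $\Delta_X$ with $\lambda_0 = 0 < \lambda_1 \leq \lambda_2 \leq \cdots$ and $\varphi_0$ constant, and choose $r_0 \geq 1$ large enough that $K \subset B(p,r_0)$ and the flux hypothesis holds on $\partial B(p,r)$ for all $r \geq r_0$ (it does, by the divergence theorem applied to $f$ which is harmonic on $\mathcal{C}(X)\setminus B(p,r_0)$). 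Expanding $f(r,x) = \sum_k a_k(r)\varphi_k(x)$ on $\{r \geq r_0\}$, the Euler ODE from the proof of Lemma \ref{coho-out} forces $a_k(r) = \mu_k r^{\alpha_k^+} + \gamma_k r^{\alpha_k^-}$. The uniform decay $f \to 0$ at infinity (combined with $\int_X |f(r,\cdot)|^2 = \sum_k |a_k(r)|^2 \to 0$) kills $\mu_k$ for all $k \geq 0$, since $\alpha_0^+ = 0$ and $\alpha_k^+ > 0$ for $k \geq 1$; this leaves $a_0(r) = \gamma_0 r^{-(n-2)}$, and the flux condition $0 = \int_{\partial B(p,r)} \partial_\nu f = r^{n-1}a_0'(r)\int_X\varphi_0$ forces $\gamma_0 = 0$, so $a_0 \equiv 0$.

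From $a_k(r) = \gamma_k r^{\alpha_k^-}$ with $\alpha_k^- \leq \alpha_1^- < 0$ for $k \geq 1$, Parseval together with the elementary estimate $(r/r_0)^{2\alpha_k^-} \leq (r/r_0)^{2\alpha_1^-}$ for $r \geq r_0$ yields the cross-sectional bound
\[
\|f(r,\cdot)\|_{L^2(X)}^2 \;=\; \sum_{k\geq 1}|\gamma_k|^2 r^{2\alpha_k^-} \;\leq\; (r/r_0)^{2\alpha_1^-}\sum_{k\geq 1}|\gamma_k|^2 r_0^{2\alpha_k^-} \;=\; (r/r_0)^{2\alpha_1^-}\|f(r_0,\cdot)\|_{L^2(X)}^2.
\]
The cone metric $g = dr^2 + r^2\bar g$ satisfies $\phi_\lambda^* g = \lambda^2 g$ for the dilation $\phi_\lambda(r,x) = (\lambda r, x)$, which moreover preserves harmonicity, so interior Schauder estimates on a fixed unit-scale ball rescale to give, for $r \geq 2 r_0$,
\[
|df(r,x)|_g \;\leq\; \frac{C}{r}\left(\frac{1}{V(B_g((r,x), r/4))}\int_{B_g((r,x), r/4)} |f|^2\,dV\right)^{\!1/2} \;\leq\; C\, r^{\alpha_1^- - 1},
\]
the second inequality following by integrating the slice bound over $r' \in [r/2, 3r/2]$ and dividing by the ball volume $V(B_g((r,x), r/4)) \simeq r^n$.

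The $L^q$ integration is then elementary: on the finite-volume piece $\{r \leq 2 r_0\} \subset \mathcal{C}(X)$ H\"older's inequality and $df \in L^2$ give $df \in L^q$ for every $q \leq 2$, while on the outer region
\[
\int_{2 r_0}^{\infty} \int_X |df(r,x)|^q \, r^{n-1}\,dx\,dr \;\leq\; C\int_{2 r_0}^{\infty} r^{q(\alpha_1^- - 1) + n - 1}\,dr,
\]
which converges exactly when $q(1 - \alpha_1^-) > n$, i.e.\ $q > n/(1 - \alpha_1^-) = q^*$.

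The main technical obstacle is the upgrade from the $L^2$ slice estimate to the pointwise bound $|df(r,x)| \lesssim r^{\alpha_1^- - 1}$: bounding $df$ mode-by-mode is awkward because the $C^1$ norms of the $\varphi_k$ grow with $k$, so instead I exploit that $f$ is a scalar harmonic function on a scale-invariant cone and invoke interior elliptic regularity, which rescales cleanly thanks to $\phi_\lambda^* g = \lambda^2 g$; everything else reduces to the Euler-ODE bookkeeping familiar from the proof of Lemma \ref{coho-out}.
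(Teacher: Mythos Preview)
Your argument is correct, but it takes a slightly different route from the paper at the final step. Up to the point where you establish $a_0\equiv 0$ and $a_k(r)=\gamma_k r^{\alpha_k^-}$ for $k\geq 1$, the two proofs coincide (the paper kills $\mu_k$, $k\geq 1$, using $df\in L^2$ rather than the decay of $f$, but this is cosmetic). The divergence is in how one passes to the $L^q$ bound for $df$. You upgrade the slice estimate $\|f(r,\cdot)\|_{L^2(X)}\lesssim r^{\alpha_1^-}$ to a \emph{pointwise} bound $|df(r,x)|\lesssim r^{\alpha_1^- -1}$ via interior elliptic regularity and the dilation invariance $\phi_\lambda^*g=\lambda^2 g$ of the cone, and then integrate. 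The paper instead stays entirely at the $L^2$ level: using the orthogonality $\int_X(\nabla\varphi_k,\nabla\varphi_l)=\lambda_l\delta_{kl}$, it computes directly
\[
\int_X |df|_g^2 \;=\; \sum_{k\geq 1}\Big(|a_k'(r)|^2 + r^{-2}\lambda_k|a_k(r)|^2\Big)\;\lesssim\; r^{2\alpha_1^- -2},
\]
and then applies H\"older on each slice, $\int_X|df|^q\leq \mathrm{Vol}(X)^{1-q/2}\big(\int_X|df|^2\big)^{q/2}$, before integrating in $r$. The paper's path is shorter and purely Fourier-analytic, avoiding elliptic regularity altogether; your path is a little longer but yields the stronger pointwise decay $|df|\lesssim r^{\alpha_1^- -1}$, which is not stated in the paper.
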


\begin{proof}

The Laplacian on $\mathcal{C}(X)$ writes

$$\Delta=-\frac{\partial^2}{\partial r^2}-\frac{n-1}{r}\frac{\partial}{\partial r}+\frac{1}{r^2}\Delta_X.$$
Denote by $\{\lambda_k\}_{k\in \mathbb{N}}$ the spectrum of the Laplacian on $X$, and denote by $\{\varphi_k\}_{k\in\mathbb{N}}$ an associated complete orthonormal family of eigenfunctions. So,

$$\Delta_X\varphi_k=\lambda_k\varphi_k.$$
One has $\lambda_0=0$ and $\varphi_0=const.$ One writes a Fourier-type expansion for the function $f$ in $x$:

$$f(r,x)=\sum_{k=0}^\infty a_k(r)\varphi_k(x),$$
and for every $r\in (0,\infty)$,

$$\int_X|f(r,x)|^2\,dx=\sum_{k=0}^\infty |a_k(r)|^2<+\infty.$$
The harmonicity of $f$ outside a compact set is equivalent to the fact that the function $a_k$ is solution of the following ODE for $r$ big enough:

$$\left(-\frac{d^2}{d r^2}-\frac{n-1}{r}\frac{d}{d r}+\frac{\lambda_k}{r^2}\right)a=0.$$
This is an Euler equation, and a basis of solutions is given by the functions $r^{\alpha^\pm_k}$, with

$$\alpha_k^\pm=-\frac{n-2}{2}\pm \sqrt{\left(\frac{n-2}{2}\right)^2+\lambda_k}.$$
Note that for $k\geq1$, $\alpha_k^+>0$ and $\alpha_k^-<0$. Let us now look at

$$df=\frac{\partial f}{\partial r}dr+d_xf.$$
Since, for $k\neq l$,

$$\begin{array}{rcl}
\int_X(\nabla \varphi_k,\nabla \varphi_l)&=& \int_X \varphi_k\Delta_X\varphi_l\\\\
&=&\lambda_k\int_X\varphi_k\varphi_l\\\\
&=&0,
\end{array}$$
and since $(dr,d_x\varphi_k)=0$ for all $k$, one has

$$\begin{array}{rcl}
\int_X|df|^2\,dx&=&\sum_{k=0}^\infty \int_X|\frac{\partial a_k}{\partial r}\varphi_kdr+a_kd_x\varphi_k|^2\\\\
&=& \sum_{k=0}^\infty \int_X|\frac{\partial a_k}{\partial r}\varphi_k|^2+|a_kd_x\varphi_k|^2\\\\
&=& \sum_{k=1}^\infty \left(\frac{\partial a_k}{\partial r}\right)^2+r^{-2}\lambda_k|a_k|^2.
\end{array}$$
Since $df\in L^2$, one must have

$$||a_kd_x\varphi_k||_2^2<\infty.$$
One has

$$\begin{array}{rcl}
||a_kd_x\varphi_k||_2^2&=&\int_0^\infty a_k^2(r)\left(\int_X r^{-2} ||d_x\varphi_k||_X^2\right)\,r^{n-1}dr\\\\
&=& \int_0^\infty a_k^2(r) r^{n-3}\,dr.
\end{array}$$
But 

$$a_k(r)=\mu_kr^{\alpha_k^+}+\gamma_k r^{\alpha_k^-}$$
with $\alpha_k^+>0$ and $\alpha_k^-<0$ for $k\geq 1$. Since $n\geq 2$, in order that the above integral be finite, one must have $\mu_k=0$ for all $k\geq !$, i.e. $a_k(r)$ is proportional to $r^{\alpha_k^-}$. For $k=0$, one has $\alpha_0^+=0$ and $\alpha_0^-=-n+2$. Since $\varphi_0$ is constant, the fact that $||a_kd_x\varphi_k||_2^2<\infty$ does not bring any restriction on $a_0$. However,

$$\int_X |f(r,x)|^2dx=\sum_{k=0}^\infty |a_k(r)|^2,$$
and one sees that if $a_0(r)=\mu_0+\gamma_0r^{-n+2}$, then as $r\to\infty$,

$$\mu_0\lesssim \int_X |f(r,x)|^2dx.$$
By assumption,

$$\lim_{r\to\infty}\int_X |f(r,x)|^2dx=0,$$
therefore $\mu_0=0$, and 

$$a_0(r)=\gamma_0r^{-n+2}.$$
Also, since for all $k\geq 1$,

$$\int_X\varphi_k=const.\int_X\varphi_k\varphi_0=0,$$
one has

$$\begin{array}{rcl}
\int_{\partial B(p,r)}\frac{\partial f}{\partial\nu} &=&\sum_{k=0}^\infty a_k'(r) r^{n-1} \int_X \varphi_k\\\\
&=&  a_0'(r)r^{n-1}\int_X\varphi_0\\\\
&=& a_0'(r)r^{n-1}\mathrm{Vol}(X).
\end{array}$$
One concludes that since $f$ has zero flux on $\partial B(p,r)$,

$$\gamma_0=0,$$
hence

$$a_0\equiv0.$$
Therefore,

$$\int_X |f(r,x)|^2\,dx=\sum_{k=1}^\infty |\gamma_k|^2 r^{2\alpha_k^-},$$
and since 

$$\sum_{k=1}^\infty |\gamma_k|^2<+\infty$$
(by letting $r=1$ in the above formula), one sees that, as $r\to \infty$,

$$\int_X |f(r,x)|^2\,dx\lesssim |\gamma_1|^2 r^{2\alpha_1^-}.$$
Taking into account that 

$$\frac{\partial a_k}{\partial r}=\frac{1}{r}a_k(r),$$
we also obtain that

$$\int_X|df|^2\,dx=r^{-2}\sum_{k=1}^\infty |\gamma_k|^2r^{2\alpha_k^-}(1+\lambda_k). $$
Since the sum must converge for, say, $r=1$, one has

$$\int_X|df|^2\,dx\lesssim |\gamma_1|^2\lambda_1 r^{2\alpha_1^--2}.$$
Now, by H\"{o}lder, for $q<2$,

$$\begin{array}{rcl}
||df||_q^q&\leq& C+\int_1^\infty \left(\int_X|df|^q\,dx\right)\,r^{n-1}dr\\\\
&\leq & C+\mathrm{Vol(X)}^{1-\frac{q}{2}}\int_1^\infty \left(\int_X|df|^2\,dx\right)^{q/2}\,r^{n-1}dr\\\\
&\leq & C+C\int_1^\infty  r^{q(\alpha_1^--1)+n-1}\,dr
\end{array}$$
This is finite if and only if $q(\alpha_1^--1)+n<0$, which is easily seen to be equivalent to $q>q_*$. This concludes the proof.

\end{proof}

\noindent{\em Proof of Theorem \ref{main-cone}:} Since $M$ has non-negative Ricci curvature outside a compact set, the condition \eqref{condik} is satisfied. Assume that $M\simeq_\infty \mathcal{C}(X)$ with $X=S^{n-1}$ (resp. $X\neq S^{n-1}$); according to Theorem \ref{gradient} and Corollary \ref{unif_bdd}, it is enough to prove that \eqref{Ker} for $\mathcal{L}=\vec{\Delta}$, the Hodge Laplacian on $1$-forms, holds for all $q\in (1,2)$ (resp. for $q=1$). By Theorem \ref{coho-comp}, $M$ satisfies the assumption of Proposition \ref{harmo}. Let $\lambda_1=\lambda_1(X)$ be the first eigenvalue of the scalar Laplacian on $X$. According to Proposition \ref{harmo} and Lemma \ref{asym_cone}, \eqref{Ker} holds for every $q\geq 1$ such that 

$$q>q^*=\frac{n}{\frac{n}{2}+\sqrt{\left(\frac{n-2}{2}\right)^2+\lambda_1}}.$$
According to the Lichnerowicz-Obata theorem (see e.g. \cite{Gal}), the fact that $\mathrm{Ric}_X\geq (n-2)\bar{g}$ implies that

$$\lambda_1(X)\geq \lambda_1(S^{n-1})=n-1,$$
with equality if and only if $X$ is isometric to the Euclidean unit sphere $S^{n-1}$. A straightforward computation shows that if $\lambda_1(X)\geq n-1$, then

$$q^*\leq 1,$$
with equality if and only if $\lambda_1(X)=n-1$. Thus, if $X=S^{n-1}$ (resp., $X$ is not isometric to $S^{n-1}$) then \eqref{Ker} for $\mathcal{L}=\vec{\Delta}$, the Hodge Laplacian on $1$-forms, holds for all $q\in (1,2)$ (resp. for $q=1$). This concludes the proof.

\cqfd

\begin{center}{\bf Acknowledgments} \end{center}
It is a pleasure to thank G. Carron for several interesting discussions on the matter of this article, as well as A. Sikora for pointing out a mistake in a previous version of this article and helping fixing it.

\end{document}